\newcommand{\cB}{\mathcal{B}}
\newcommand{\C}{\mathcal{C}}
\newcommand{\cE}{\mathcal{E}}
\newcommand{\cF}{\mathcal{F}}
\newcommand{\cG}{\mathcal{G}}
\newcommand{\cN}{\mathcal{N}}
\newcommand{\cP}{\mathcal{P}}
\newcommand{\cR}{\mathcal{R}}
\newcommand{\cS}{\mathcal{S}}
\newcommand{\cT}{\mathcal{T}}
\newcommand{\cX}{\mathcal{X}}
\newcommand{\cY}{\mathcal{Y}}
\newcommand{\cZ}{\mathcal{Z}}
\renewcommand{\parallel}{\,\|\,}
\newcommand*{\ep}{\varepsilon}
\newcommand*{\dist}{\mathrm}
\newcommand*{\metric}{\rho}
\DeclareMathOperator{\im}{Im}
\DeclareMathOperator{\tr}{tr}
\DeclareMathOperator{\var}{var}
\DeclareMathOperator{\supp}{psupp}
\DeclareMathOperator*{\argmax}{argmax}
\pgfplotsset{
  every axis/.append style = {thick},tick style = {thick,black},
  /tikz/normal shift/.code 2 args = {%
    \pgftransformshift{%
        \pgfpointscale{#2}{\pgfplotspointouternormalvectorofticklabelaxis{#1}}%
    }%
  },%
  range3frame/.style = {
    tick align        = outside,
    scaled ticks      = false,
    enlargelimits     = false,
    ticklabel shift   = {10pt},
    axis lines*       = left,
    line cap          = round,
    clip              = false,
    xtick style       = {normal shift={x}{10pt}},
    ytick style       = {normal shift={y}{10pt}},
    ztick style       = {normal shift={z}{10pt}},
    x axis line style = {normal shift={x}{10pt}},
    y axis line style = {normal shift={y}{10pt}},
    z axis line style = {normal shift={z}{10pt}},
  }
}
\definecolor{NYUpurple}{rgb}{0.34,0.02,0.55}
\newcommand{\eps}{\varepsilon}
\numberwithin{equation}{section}
\newtheorem{definition}{Definition}
\newtheorem{theorem}[definition]{Theorem}
\newtheorem{lemma}[definition]{Lemma}
\newtheorem{proposition}[definition]{Proposition}
\newtheorem{assumption}{Assumption}
\newtheorem{appxlem}{Lemma}[section]
\newtheorem{appxprop}[appxlem]{Proposition}
\newtheorem{appxthm}[appxlem]{Theorem}
\newcommand{\R}{{\rm I}\kern-0.18em{\rm R}}
\newcommand{\h}{{\rm I}\kern-0.18em{\rm H}}
\newcommand{\K}{{\rm I}\kern-0.18em{\rm K}}
\newcommand{\p}{{\rm I}\kern-0.18em{\rm P}}
\newcommand{\E}{{\rm I}\kern-0.18em{\rm E}}
\newcommand{\1}{{\rm 1}\kern-0.24em{\rm I}}
\newcommand{\N}{{\rm I}\kern-0.18em{\rm N}}
\newcommand{\bone}{\mathbf{1}}
\definecolor{MIT}{RGB}{163,31,52}
\newcommand{\sg}{\mathsf{g}}
\newcommand{\ZZ}{\mathbb{Z}}
\newcommand{\RR}{\mathbb{R}}
\crefname{theorem}{Theorem}{Theorems}
\crefname{observation}{Observation}{Observations}
\crefname{claim}{Claim}{Claims}
\crefname{condition}{Condition}{Conditions}
\crefname{example}{Example}{Examples}
\crefname{fact}{Fact}{Facts}
\crefname{lemma}{Lemma}{Lemmas}
\crefname{corollary}{Corollary}{Corollaries}
\crefname{definition}{Definition}{Definitions}
\crefname{remark}{Remark}{Remarks}
\newcommand*{\defeq}{\mathrel{\rlap{%
                     \raisebox{0.3ex}{$\m@th\cdot$}}%
                     \raisebox{-0.3ex}{$\m@th\cdot$}}%
                    =}
\newcommand*{\eqdef}{=
  \mathrel{\rlap{%
      \raisebox{0.3ex}{$\m@th\cdot$}}%
    \raisebox{-0.3ex}{$\m@th\cdot$}}%
}
\begin{document}

\begin{frontmatter}

\title{Optimal rates of estimation for multi-reference alignment}
\runtitle{Optimal rates for MRA}

\begin{aug}

\author{\fnms{Afonso S.}~\snm{Bandeira}\thanksref{t1}\ead[label=afonso]{bandeira@cims.nyu.edu}},
\author{\fnms{Philippe}~\snm{Rigollet}\thanksref{t2}\ead[label=rigollet]{rigollet@math.mit.edu}},
\and
\author{\fnms{Jonathan}~\snm{Weed}\thanksref{t3}\ead[label=jon]{jweed@mit.edu}}

\affiliation{Courant Institute of Mathematical Sciences, New York University\\ Massachusetts Institute of Technology\\Massachusetts Institute of Technology}

\thankstext{t1}{Part of this work was done while A. S. Bandeira was with the Mathematics Department at MIT and supported by NSF Grant DMS-1317308.}
\thankstext{t2}{This work was supported in part by NSF CAREER DMS-1541099, NSF DMS-1541100, DARPA W911NF-16-1-0551, ONR N00014-17-1-2147 and a grant from the MIT NEC Corporation.}
\thankstext{t3}{This work was supported in part by NSF Graduate Research Fellowship DGE-1122374.}

\address{{Afonso S. Bandeira}\\
{Department of Mathematics} \\
{Courant Institute of Mathematical Sciences}\\
{Center for Data Science} \\
{New York Univeristy,}\\
{New York, NY 10012, USA}\\
\printead{afonso}
}

\address{{Philippe Rigollet}\\
{Department of Mathematics} \\
{Massachusetts Institute of Technology}\\
{77 Massachusetts Avenue,}\\
{Cambridge, MA 02139-4307, USA}\\
\printead{rigollet}
}

\address{{Jonathan Weed}\\
{Department of Mathematics} \\
{Massachusetts Institute of Technology}\\
{77 Massachusetts Avenue,}\\
{Cambridge, MA 02139-4307, USA}\\
\printead{jon}
}

\runauthor{Bandeira, Rigollet and Weed}
\end{aug}

\begin{abstract}In this paper, we establish optimal rates of adaptive estimation of a vector in the multi-reference alignment model, a problem with important applications in fields such as signal processing, image processing, and computer vision, among others. We describe how this model can be viewed as a multivariate Gaussian mixture model under the constraint that the centers belong to the orbit of a group. This enables us to derive matching upper and lower bounds that feature an interesting dependence on the signal-to-noise ratio of the model. Both upper and lower bounds are articulated around a tight local control of Kullback-Leibler divergences that showcases the central role of moment tensors in this problem.
\end{abstract}

\begin{keyword}[class=AMS]
\kwd[Primary ]{Statistics}
\kwd[; secondary ]{Invariant Theory, Signal Processing}
\end{keyword}
\begin{keyword}[class=KWD]
Multi-reference alignment, Orbit retrieval, Mixtures of Gaussians
\end{keyword}

\end{frontmatter}
\section{Introduction}
A fundamental problem arising in various scientific and engineering domains is the presence of heterogenous data. In many applications, each observation of an object of interest is corrupted not only by noise but also by a latent transformation, which can often be modeled as the action of an unknown element of a known group. The presence of these latent transformations raises serious challenges, both in theory and in practice.

Our goal in this work is to inaugurate the statistical study of such models and establish optimal rates of estimation for a particular version known in the computer science literature as \emph{multi-reference alignment}, a simple problem arising in fields such as structural biology~\cite{SchValNun05,TheSte12,Sad89}, image recognition~\cite{Bro92}, and signal processing~\cite{ZwaHeiGel03}.
The tools we develop to prove these bounds provide a unified theoretical framework for statistical estimation in the presence of algebraic structure.

\subsection{Algebraically structured models and cryo-EM}
A primary motivation to study models with algebraic structure is cryo-electron microscopy (cryo-EM), an important technique to determine three-dimensional structures of biological macromolecules. The citation for the 2017 Nobel prize in Chemistry, awarded to its inventors, reads:

\begin{quote}
The Nobel Prize in Chemistry 2017 was awarded to Jacques Dubochet, Joachim Frank and Richard Henderson ``for developing cryo-electron microscopy for the high-resolution structure determination of biomolecules in solution".
\end{quote}

In this imaging technique, each measurement consists of a noisy tomographic projection of a rotated---by an unknown rotation in $\mathrm{SO}(3)$---copy of an unknown molecule. The task is then to reconstruct the molecule density from many such measurements. This reconstruction problem has received significant attention, primarily from computational perspectives, but its statistical properties remain largely unexplored. This problem features three singular characteristics: (i) The latent group action in each observation---here a rotation---(ii) the tomographic projection and (iii) the presence of high noise as illustrated by Figure~\ref{FIG:noisy_image}.

\begin{wrapfigure}[16]{r}{0.4\textwidth}
\vspace{-1.25em} 
\begin{center}
\includegraphics[width = 0.38\textwidth]{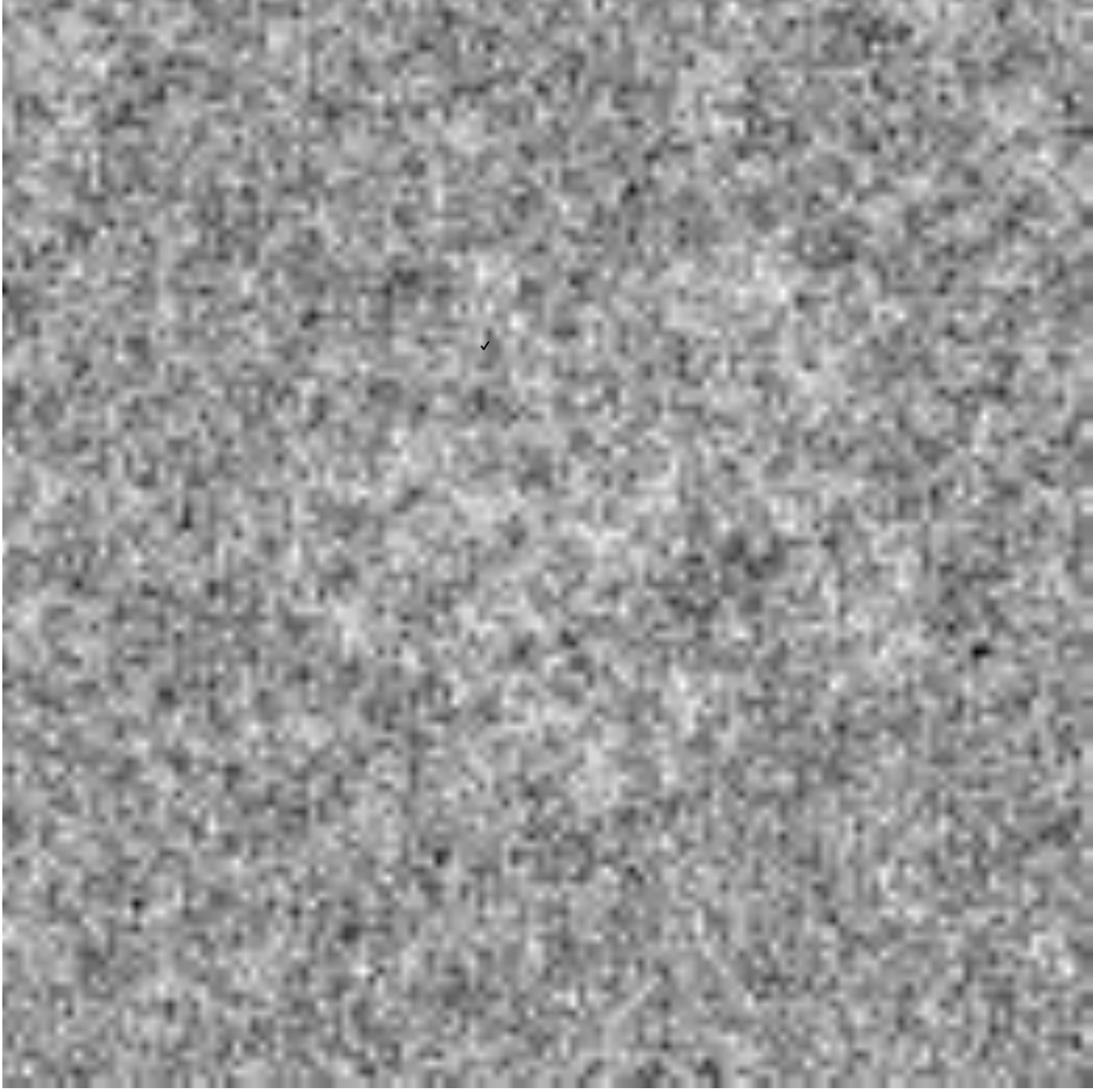}
\caption{Sample image from the E. coli 50S ribosomal subunit [Source: Sigworth Lab (Yale Medical School)].}
\label{FIG:noisy_image}
\end{center}
   \vspace{1.51em} 
  \end{wrapfigure} 

As a first step toward the statistical analysis of this class of algebraically structured models, we focus on a simpler model that features two of the aforementioned characteristics, namely (i) the presence of a group action and (iii) the presence of high noise. This model is simpler to analyze and already presents fundamentally novel statistical features that manifest themselves in nonclassical rates of estimation.

Denote by $\cG$ a known compact subgroup of the group $\mathrm{O}(L)$ of orthogonal transformations of $\R^L$. Throughout this paper, we identify the action of a group element $G \in \cG \subset \mathrm{O}(L)$ on $\R^L$ by left-multiplication with an orthogonal matrix $G$. We slightly abuse terminology by referring to $G$ as a group element. Our goal is to recover a parameter $\theta \in \R^L$, which we often refer to as a \emph{signal}, on the basis of very noisy observations corrupted by unknown elements of $\cG$. Concretely, we observe
\begin{equation}\label{eqn:basic_model}
Y_i = G_i \theta + \sigma \xi_i\,,
\end{equation}
where $G_i \in \cG$ is unknown and $\xi_i$ is standard Gaussian noise independent of $G_i$. The parameter $\theta$ is only identifiable up to the action of $\cG$, so we focus on obtaining an estimator $\tilde \theta$ whose distance to the orbit of $\theta$ as defined by
\begin{equation*}
\rho(\tilde \theta, \theta) := \min_{G \in \cG} \|\tilde \theta - G \theta\|
\end{equation*}
is small in expectation.
We call~\eqref{eqn:basic_model} an \emph{algebraically structured model}. For normalization purposes, we assume that $c^{-1} \leq \|\theta\| \leq c$ for some universal positive constant $c$. Fixing the scaling of $\theta$ in such a way allows us to control the signal-to-noise ratio of the problem only via the parameter $\sigma$, which plays a central role in the sequel.

\subsection{Prior work: The synchronization approach}\label{sec:synchro}
The difficulty of algebraically structured models resides in the fact that both the signal $\theta$ and the transformations $G_1,\ldots,G_n \in \cG$ are unknown and the latter are therefore latent variables. If the group elements were known, one could easily estimate the vector $\theta$ by taking the average of $G_i^{-1} Y_i, i =1, \ldots, n$. In fact, this simple observation is the basis of the leading current approach to this problem, called the ``synchronization approach"~\cite{BanChaSin14,BanCheSin15}. Specifically, synchronization aims at recovering the latent variables $G_{i}$ by solving a 
problem of the form
\begin{equation}\label{eqn:latent_rotations}
\min_{H_{1}, \dots, H_{n} \in \cG} \sum_{1 \leq i, j \leq n} \big\|{H}_{i}^{-1} Y_i - {H}_{j}^{-1} Y_j\big\|^2\,.
\end{equation}
Denoting by $\tilde H_{i}$ the solutions of~\eqref{eqn:latent_rotations}, one can then estimate $\theta$ by the average of ${\tilde H}^{-1}_i Y_i, i =1, \ldots, n$. 
Despite synchronization problems being computationally hard in general~\cite{BanChaSin14}, certain theoretical guarantees have been derived under specific noise models that are unfortunately not realistic for the problems of interest in this paper. For example, it is often assumed that each \emph{pair} of observations is corrupted by independent noise, so that the terms in the sum  in~\eqref{eqn:latent_rotations} are independent. Instead, our model adopts the more relevant assumption of independent noise on each observation.
Among the most prominent methods to date are spectral methods~\cite{Sin11,BanSinSpi11}, semidefinite relaxations~\cite{BanChaSin14,BanCheSin15,AbbBanBra14b,BanBouSin16,JavMonRic16,BanBouVor16}, methods based on Approximate Message Passing~\cite{PerWeiBan16a} and other modified power methods~\cite{Boumal_ProjPowerMethod,Chen_Candes_ProjPowerMethod}. Synchronization also enjoys many interesting connections with geometry (see, e.g., \cite{Gao_GeometrySynchronization}).

Another fundamental drawback of the synchronization approach is its intolerance to large noise levels $\sigma$.
When $\sigma$ is significantly smaller than $\|\theta\|$, the prior work referenced above has demonstrated empirically and theoretically that the synchronization approach yields excellent results.
Intuitively, the success of this approach relies on the fact that when the noise is small, macroscopic features of the underlying signal are still visible.
However, as noted in our discussion of cryo-EM, the noise level in applications is often significantly larger than the signal~\cite{Sig16}, which renders the synchronization approach unusable. An illustration of the difference between these regimes appears in Figure~\ref{fig:noise_low_high}.

\begin{figure}[h]
\begin{center}
\includegraphics[width=\textwidth]{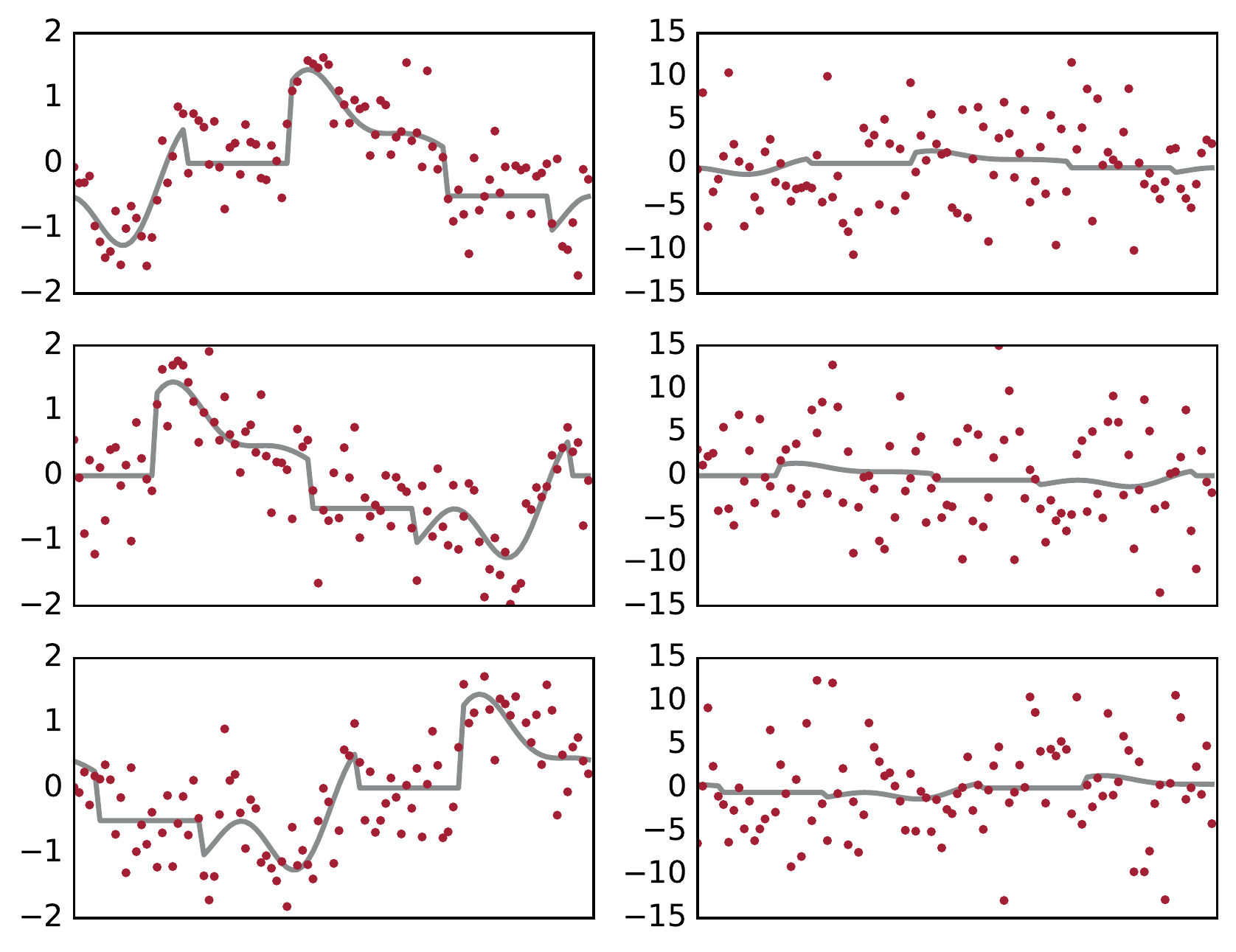}
\caption{Instances of the multi-reference alignment problem, defined in Section~\ref{sec:mra}, at low (left column) and high (right column) noise levels. The true underlying signal appears in gray, and the noised version appears in red. When the noise level is low, large features of the signal are still visible despite the noise; in the presence of large noise, however, the signals cannot reliably be synchronized.}
\label{fig:noise_low_high}
\end{center}
\end{figure}

From a theoretical standpoint, this fact implies that the low- and high-noise regimes are very different: when the noise is sufficiently large, prior work has shown that the transformations are impossible to reliably estimate, regardless of the number of samples~\cite{Weinstein_limitssignalalignment,Sapiro_limitsimagealignment}.
Thus, for the high-noise regime, new techniques are required. We therefore focus in this work on the case where the variance of the noise is bounded below by a constant.

\subsection{The Gaussian mixture approach}\label{SEC:setup}
We propose an alternative to the synchronization approach that completely bypasses the estimation of the transformations $G_1,\dots,G_n$ in favor of estimating $\theta$ directly. To do so, we first show how to recast our model as a continuous mixture of Gaussians whose centers are algebraically constrained.

To reinterpret~\eqref{eqn:basic_model} as a Gaussian mixture model, we replace the latent group elements $G_1, \dots, G_n$ by group elements drawn independently and uniformly at random (according to the Haar measure) from $\cG$.
This is a worst-case assumption, which is appropriate since we prove minimax rates.\footnote{Following an earlier version of this paper, \cite{AbbBenLee17} considered a version of multi-reference alignment when the distribution of $G$ is not uniform and showed that strictly better rates can be obtained in some cases.} Indeed, we can always reduce to this case: since the Gaussian distribution is invariant under the action of the orthogonal group, we can transform each observation $Y_i$ into $H_i Y_i$, where $H_i$ is uniformly distributed over $\cG$ and independent of all other random variables. Since $H_i G_i$ is also uniformly distributed over $\cG$, these new observations are drawn from a mixture of Gaussians whose centers are given by $G\theta$, $G \in \cG$, with uniform mixing weights. In particular, these centers are linked together by a rigid algebraic structure: they are the orbit of $\theta \in \R^L$ under the action of $\cG$.

We therefore specify the following Gaussian mixture model. Given a noise level $\sigma$, group $\cG \subseteq \mathrm{O}(L)$, and parameter of interest $\theta \in \R^L$, denote by $\dist{P}_\theta$ the distribution of a random variable $Y$ satisfying
\begin{equation}\label{eq:MRA:model}
Y = G\theta + \sigma \xi\,,
\end{equation}
where $G$ is drawn uniformly from $\cG$ and $\xi \sim \cN(0, I_L)$ is independent Gaussian noise.

We assume throughout that the noise variance $\sigma^2$ is known. This assumption is realistic in many applications such as imaging or signal processing, where it is inexpensive to collect pure-noise samples from $\dist{P}_0$ and thereby estimate $\sigma^2$ to arbitrary accuracy.

In this work, we analyze the maximum likelihood estimator (MLE) $\tilde \theta_n$ for~\eqref{eq:MRA:model}:
\begin{equation}\label{eq:ML:mixtureGaussian}
\tilde \theta_n=\argmax_{\phi \in \R^L} \sum_{i=1}^n \log \E\big[ \exp\big(- \frac{1}{2\sigma^2} \|Y_i - G \phi\|^2\big)\big|Y_i\big]\,.
\end{equation}
We focus on obtaining the optimal scaling of the quantity $\rho(\tilde \theta_n, \theta)$ with the signal-to-noise ratio of the problem.\footnote{Our focus in this work is on statistical properties rather than on computation. In a companion paper~\cite{PerWeeBan17}, we propose and analyze a computationally efficient estimator for multi-reference alignment.}  This question is central to signal processing problems where $\sigma$ is quite large, since it determines the order of magnitude of the sample size $n$ required to achieve a certain accuracy.
Moreover, in many applications, technological improvements to the measurement apparatus can directly improve the effective value of $\sigma$---in cryo-EM, for instance, this is a focus of active research~\cite{Sig16}.
For these reasons, understanding the scaling of $\rho(\tilde \theta_n, \theta)$ with $\sigma$ is a core question both in theory and in practice.
Our main upper bound result gives a \emph{uniform} analysis of this maximum likelihood estimator, valid for any algebraically constrained model. We complement this analysis with lower bounds which are equally universal. In both cases, we proceed by controlling the Fisher information of the model.

Gaussian mixture models have been extensively studied in the statistical literature since their introduction by~\cite{Pea94} in the nineteenth century (see, e.g.,~\cite{McLPee00} for an overview). As illustrated by the extant literature, mixture models are quite rich and broadly applicable to a variety of statistical problems ranging from clustering to density estimation. It is known that the rate of estimation of the parameters of a Gaussian mixture with $k$ components can scale like $n^{-O(1/k)}$ (see for example \cite{Che95,MoiVal10} and more recently~\cite{HeiKah15} %
for an interesting explanation from the point of view of model misspecification). In this work, our analysis of the multi-reference alignment problem focuses on a setting where the convergence of $\tilde \theta_n$ to $\theta$ occurs at the parametric $n^{-1/2}$ rate; nevertheless, our results show that even in this benign setting, the optimal dependence of this rate on $\sigma$ can still be extremely poor.

\subsection{Multi-reference alignment}\label{sec:mra}
As an application of our techniques, we analyze and establish optimal rates for a model known as multi-reference alignment, a simple algebraically structured model.
Multi-reference alignment is a special case of cryo-EM, where instead of three-dimensional rotations we consider phase shifts of a periodic signal.
This represents a special case of cryo-EM because it corresponds to the situation when the axis of rotation of the molecule is known, but not its angle.

In addition to being a toy model for cryo-EM, this simpler model is also of independent interest in several applications including in structural biology~\cite{TheSte12} and radar classification~\cite{ZwaHeiGel03}. A discrete version of this problem where the group is the cyclic group $\ZZ/L$ acting on the coordinates of $\theta$ was introduced in~\cite{BanChaSin14} to permit approaches based on semi-definite programming; however, our results indicate that this simplification is not actually benign, in the sense that the discretized model admits significantly worse rates of estimation than the model we describe below. We compare our more general model with theirs in Appendix~\ref{supp}.

Let $f: [0, 1] \to \R$ be an unknown function, and let $g_{s}$ be the shift operator which acts on $f$ by $g_{s}\hspace{-.5ex}\circ\hspace{-.5ex} f(x) = f(x + s)$, where $s \in [0, 1)$ and the addition is performed modulo~$1$.
These operators clearly form a group, denoted $\cS$, which is isomorphic to $\R/\ZZ$.
We observe independent copies of
\begin{equation}\label{eq:continuous_model}
Y = g_{s} \hspace{-.5ex}\circ\hspace{-.5ex} f(x) + \sigma \xi\,,
\end{equation}
where where $g_{s} \hspace{-.5ex}\circ\hspace{-.5ex} f(x) \in \R^L$ denotes the vector $(g_s\hspace{-.5ex}\circ\hspace{-.5ex} f(x_j))_{j=1}^L$ for the fixed design $x_j = j/L$, $s$ is drawn uniformly at random from $[0, 1]$, and $\xi \sim \cN(0, I_L)$ is independent of $s$.

To put~\eqref{eq:continuous_model} into the same form as~\eqref{eq:MRA:model}, assume that the function $f$ is \emph{band limited}---i.e., the Fourier transform of $f$ vanishes outside the interval $[-B, B]$ for some positive integer $B$---and that the measurements are performed above the \emph{Nyquist frequency}---i.e., $L > 2B$.
This assumption ensures that the function $f$ is identifiable, in that the discrete measurements $f(x_1), \dots, f(x_L)$ suffice to recover $f$.

The action of $\cS$ on $f(x)$ can be identified with a subgroup of the orthogonal group $\mathrm{O}(L)$ by passing to the Fourier domain.
Indeed, since $f$ is band limited, it can be identified with the vector of its $2B + 1$ Fourier coefficients $(\hat f(-B), \dots, \hat f(B))$, which we denote by $\hat f$.
Writing
\begin{equation*}
f(x) = \sum_{k=-B}^{B} \hat f(k) e^{-2\pi \mathrm{i} k x}\,,
\end{equation*}
yields the relation
\begin{equation*}%
\widehat{g_{s} \hspace{-.5ex}\circ\hspace{-.5ex} f}(k) = \hat f(k) e^{-2 \pi \mathrm{i} s k} =: \hat f(k) z^k\,,
\end{equation*}
where $z = e^{-2 \pi \mathrm{i} s}$ is a complex number of unit norm.
This identifies $\cS$ with the circle group $U(1)$.%

Writing $\theta$ for the vector $f(x)$, we obtain an example of~\eqref{eq:MRA:model}: we observe independent copies of
\begin{equation}\label{eq:phase_shift}
Y = G_{z} \theta + \sigma \xi\,,
\end{equation}
where $\theta \in \R^L$ is the parameter to be estimated, $z$ is drawn uniformly at random from $U(1)$, $\xi$ is a standard Gaussian random variable independent of $z$, and
$G_{z} \theta$ is defined by its Fourier coefficients:
\begin{equation}\label{eq:discrete_action}
\widehat{G_z \theta}_k = z^k \hat \theta_k \text{ for $k = -\lfloor L/2 \rfloor, \dots, \lfloor L/2 \rfloor$,}
\end{equation}
where we use the notation $\hat \theta$ to represent the discrete Fourier transform of $\theta$.
If we restrict $z$ to be of the form $\omega^L$, where $\omega$ is a primitive $L$th root of unity, then we recover the discrete model of~\cite{BanChaSin14}. We call model~\eqref{eq:phase_shift}, the \emph{phase shift} model.

\subsection{Organization of the paper}
In Section~\ref{section_mainresults} we present our main results, Theorems~\ref{thm:main_lower} and~\ref{thm:main_upper}, providing minimax rates for the multi-reference alignment problem under the phase shift model~\eqref{eq:phase_shift}. 
The proofs of these theorems rely on developing general tools for analyzing algebraically structured models and controlling the Kullback-Leibler divergence between distributions corresponding to two different signals.

In Section~\ref{sec:uniform_upper}, we give guarantees on the maximum likelihood estimator (MLE) under a condition on the curvature of the KL divergence.
We then specialize to the phase shift model in Section~\ref{SEC:modMLE} and develop a modified MLE for the phase shift model which achieves the optimal rates in Theorem~\ref{thm:main_upper}.
Section~\ref{sec:multi} concludes by establishing the lower bound in Theorem~\ref{thm:main_lower}; the proof involves finding pairs of different signals with several matching invariant moment tensors.
Both lower and upper bounds depend on an analysis of the KL divergence for algebraically structure models, which appears in Appendix~\ref{sec:tight_bounds}.

\subsection{Notation}
We define the Fourier transform $\hat \theta$ of $\theta \in \R^L$ by
\begin{equation*}
\hat \theta_j = \frac{1}{\sqrt L} \sum_{k=1}^L e^{2 \pi \mathrm{i} j k /L} \theta_k\,, \quad \quad - \lfloor L/2 \rfloor \leq j \leq \lfloor L/2 \rfloor\,.
\end{equation*}
We assume for convenience throughout that $L$ is odd.

The symbol $\|\cdot \|$ denotes the $\ell_2$ norm on $\R^L$. For any positive integer $k$, we write $[k]=\{1, \ldots, k\}$. We use $z^*$ to denote the complex conjugate of $z \in \mathbb{C}$

Given a vector $t$, let $t^{\otimes k}$ denote the order-$k$ tensor formed by taking the $k$-fold tensor product of $t$ with itself.
Denote by $\|A\|$ the Hilbert-Schmidt norm of a tensor $A$, defined by $\|A\|^2 = \langle A, A\rangle$, where $\langle \cdot, \cdot \rangle$ denotes the entrywise inner product. It is easy to check that, for any two column vectors $t, u$ of the same size, the identity $\langle t^{\otimes k}, u^{\otimes k}\rangle=(t^\top u)^k$ holds. 

A tensor $A$ is \emph{symmetric} if 
$A_{i_1\dots i_k} = A_{i_{\pi(1)} \dots i_{\pi(k)}}$
for any permutation $\pi$ of $[k]$. For such tensors, the value $A_{i_1\dots i_k}$ depends only on the multiset $\{i_1,\dots,i_k\}$, or equivalently on the multi-index $\alpha$ defined by $\alpha_\ell = |\{j \in [k]: i_j = \ell\}|$.

The Kullback-Leibler (KL) divergence between two distributions $P$ and $Q$ such that $P \ll Q$ is given by
\begin{equation*}
D(P \parallel Q) = \int\log \big(\frac{dP}{dQ}\big) \,dP\,.
\end{equation*}
It is well known that $D(P \parallel Q) \geq 0$, with equality holding iff $P = Q$.

\section{Main results}
\label{section_mainresults}

As mentioned above, the rescaled loss $\sqrt{n}\rho(\tilde \theta_n, \theta)$ of the maximum depends asymptotically on the Fisher information of the model, which can be related to the curvature of the Kullback-Leibler divergence around its minimum. Conversely, (lack of) curvature of the Kullback-Leibler divergence around its minimum is what controls minimax lower bounds that are valid for \emph{any} esitmator.
We provide a unified framework for proving upper and lower bounds based on the curvature of the divergence function, following an idea originally introduced in \cite{LepNemSpo99} in the context of functional estimation and further developed by \cite{JudNem02,CaiLow11,WuYan16,CarVer17,BlaCarGut17}. In the multi-reference alignment model, this approach allows us to relate Kullback-Leibler divergence to moment tensors, which can in turn be controlled using Fourier-theoretic arguments.

Our analysis establishes that the difficulty of estimating a particular signal $\theta$ depends on the support of the Fourier transform $\hat \theta$ of $\theta$.
Define the \emph{positive support} $\supp(\hat \theta)$ of $\hat \theta$ by
\begin{equation*}
\supp(\hat \theta) = \{j \mid j \in \{1, \dots,  L/2 \}, \hat \theta_j \neq 0\}\,.
\end{equation*}
We focus only on the positive indices because the signal $\theta$ is real, so the Fourier transform is conjugate symmetric: $\hat \theta_j^* = \hat \theta_{-j}$.

We make the following assumptions.%
\begin{assumption}\label{assumption:norm}
There exists an absolute constant $c > 1$ such that $c^{-1} \leq \|\theta\| \leq c$.
\end{assumption}
\begin{assumption}\label{assumption}
Moreover, there exists an absolute constant $c_0$, not depending on $n$, such that $c_0 \leq |\hat \theta_j|$ for all $j \in \supp(\hat \theta)$.
\end{assumption}

We denote by $\cT$ the set of vectors satisfying Assumptions~\ref{assumption:norm} and~\ref{assumption}.
Assumption~\ref{assumption:norm} is benign and is adopted for normalization purposes, so that $\sigma$ captures entirely the signal-to-noise ratio of the problem.
Regarding Assumption~\ref{assumption}, we emphasize that this is the situation of most interest to practitioners: the existence of very small, but non-zero, coordinates whose values approach $0$ with $n$ should rightly be considered pathological. Assumption~\ref{assumption} rules out certain artificial situations analogous to classical difficulties arising in estimating mixtures of Gaussians, such as distinguishing the mixture $.5\cN(+\eps, 1) + .5\cN(-\ep, 1)$ from the single Gaussian $\cN(0, 1)$ for very small $\eps$.
Determining minimax rates of estimation without Assumption~\ref{assumption} is certainly of theoretical interest, and we leave this question for future work.

As noted above, our results focus on understanding how minimax rates of estimation for the multi-reference alignment problem scale with $\sigma$.
This is the question of primary interest in algebraically structured problems like cryo-EM, since in these applications $\sigma$ is the only part of the model that can be improved by the development of new imaging technologies and techniques.
We note that our results do not address the dependence on the dimension $L$, and obtaining sharp dependence on $L$ is an attractive open problem.

The following theorem reveals a surprising phenomenon: even under Assumption~\ref{assumption}, the multi-reference alignment problem suffers from the curse of dimensionality.
We prove the following lower bound for the phase shift model.
\begin{theorem}\label{thm:main_lower}
Let $0 \leq s \leq \lfloor L/2 \rfloor$.
Let $\cT_s$ be the set of vectors $\theta \in \cT$ satisfying %
$\supp(\hat \theta) \subset [s]$.
For any $\sigma \geq \max_{\theta \in \cT_s} \|\theta\|$, the phase shift model satisfies
\begin{equation}\label{eqn:minimax_lower}
\inf_{T_n} \sup_{\theta \in \cT_s} \E_\theta[\metric(T_n, \theta)] \geq C\Big( \frac{\sigma^{(2s-1) \vee (s+1)}}{\sqrt n} \wedge 1\Big)\,,
\end{equation}
where the infimum is taken over all estimators $T_n$ of $\theta$ and where $C$ is a universal constant.
\end{theorem}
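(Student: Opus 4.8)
The plan is to establish the lower bound via a reduction to a two-point (or fuzzy two-point) testing argument in which the Kullback-Leibler divergence between $\dist{P}_\theta$ and $\dist{P}_{\theta'}$ is made as small as possible while keeping $\metric(\theta,\theta')$ of constant order. The crucial input will be the tight control of the KL divergence for algebraically structured models referenced in Appendix~\ref{sec:tight_bounds}: for two signals whose first $m$ invariant moment tensors agree, the KL divergence between the corresponding mixtures decays like a high power of $1/\sigma$, roughly $D(\dist{P}_\theta \parallel \dist{P}_{\theta'}) \lesssim \sigma^{-2(m+1)} \|\theta^{\otimes(m+1)} - \theta'^{\otimes(m+1)}\|^2$ or a comparable bound. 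Given such an estimate, Le Cam's method yields $\inf_{T_n}\sup \E[\metric(T_n,\theta)] \gtrsim \metric(\theta,\theta') \cdot (1 - \sqrt{n D(\dist{P}_\theta \parallel \dist{P}_{\theta'})/2})$, so taking $\metric(\theta,\theta')$ constant and $n \asymp 1/D$ gives a lower bound of order $\sigma^{m+1}/\sqrt n \wedge 1$. The exponent in~\eqref{eqn:minimax_lower} will then come from the largest $m$ for which such a matching pair exists inside $\cT_s$.

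The combinatorial heart of the argument is therefore the construction, for each $s$, of two distinct signals $\theta,\theta' \in \cT_s$ (so $\supp(\hat\theta), \supp(\hat\theta') \subset [s]$, both obeying the norm and lower-bound assumptions) whose orbits under $U(1)$ are genuinely far apart, $\metric(\theta,\theta') \geq c$, but whose invariant moments of as high an order as possible coincide. The invariants of the phase-shift action are spanned by monomials $\prod \hat\theta_{j_a}$ with $\sum_a \pm j_a = 0$ (the products of Fourier coefficients over frequency multisets summing to zero); a degree-$d$ invariant uses frequencies in $\{-s,\dots,s\}$, so the number of free ``degrees of freedom'' one must kill grows, and on the other hand a signal with only $s$ positive frequencies has roughly $s$ complex (hence $\sim 2s$ real, up to the orbit dimension) parameters. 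Balancing these two counts is what produces the threshold: one can match all invariants up to order $2s-1$ generically, which yields the $\sigma^{2s-1}$ term, except that for small $s$ the relevant obstruction is instead the power-spectrum / low-order invariants, giving the $\sigma^{s+1}$ term; the final exponent is the $\max$ of the two. Concretely I would first handle the bispectrum-type invariants (degree $3$) to see why $s=1,2$ behave specially, then give a general construction — for instance perturbing a fixed $\theta$ along a carefully chosen direction in Fourier space that annihilates all low-order invariant polynomials, using a dimension-counting/linear-algebra argument (the space of degree-$\leq m$ invariants restricted to frequencies in $[s]$ has dimension smaller than the parameter count precisely when $m \leq 2s-2$, say).

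The main obstacle, and where most of the work lies, is exactly this explicit construction of the matching pair together with verifying that the two signals remain in $\cT_s$ (i.e.\ that the perturbation does not drive any $|\hat\theta_j|$ below $c_0$ or the norm outside $[c^{-1},c]$) \emph{and} that their orbits stay a constant distance apart — a perturbation that kills many invariants risks being small, hence giving a pair that is close in $\metric$, which would weaken the bound. Resolving this tension requires choosing the base point and the perturbation direction so that the perturbation is of order one yet still lies in the common zero set of the low-order invariants; this is plausible because the invariant ring is generated in bounded degree and the ``moment variety'' has positive-dimensional fibers, but making it quantitative (uniform constants independent of $L$) is delicate. A secondary technical point is upgrading the two-point bound to handle the case where $\sigma \geq \max_\theta \|\theta\|$ is large: here one must make sure the $\wedge 1$ truncation is correctly produced, which follows from the fact that $\metric(\theta,\theta') \leq 2c$ always, so when $\sigma^{(2s-1)\vee(s+1)}/\sqrt n \geq 1$ the bound is trivially achieved at the constant level. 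Once the pair is in hand, plugging into Le Cam via the KL estimate from Appendix~\ref{sec:tight_bounds} and optimizing over the scale of the perturbation completes the proof.
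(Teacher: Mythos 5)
Your high-level strategy---Le Cam's two-point method driven by the moment-matching/KL machinery of Appendix~\ref{sec:tight_bounds}---is exactly the paper's, but the proposal leaves the decisive step unresolved and mis-states how the two points should be chosen. The paper does not use a pair at \emph{constant} separation: for $s\ge 2$ it fixes $\hat\phi$ supported on $\{\pm(s-1),\pm s\}$ with all moduli equal to $1/2$, and takes $\theta_n$ to be the \emph{same} signal with the $\pm s$ coordinates multiplied by a phase $e^{\pm\mathrm{i}\delta}$, where $\delta \asymp \sigma^{2s-1}/\sqrt n \wedge 1$. Because the Fourier \emph{magnitudes} are unchanged, Proposition~\ref{prop:generalized_matching} gives \emph{exact} equality of all moment tensors up to order $2s-2$ for every $\delta$; the only nonzero contribution to the KL bound of Theorem~\ref{thm:tight_KL_bound} is then the order-$\sigma^{-(4s-2)}\metric^2$ remainder, so $nD\le 1/2$ at separation $\metric(\theta_n,\phi)\asymp\delta$, which is the target rate. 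Your worry that ``a perturbation that kills many invariants risks being small, hence giving a pair that is close in $\metric$, which would weaken the bound'' has the logic inverted: the pair \emph{must} be close (at the target rate) when $n$ is large; what matters is that the moments match exactly along the whole one-parameter family so that $D\lesssim \sigma^{-(4s-2)}\metric^2$ with a constant uniform in the separation. A first-order (tangent-direction) annihilation of the low-degree invariants, as your dimension-counting/linear-algebra sketch would produce, is not sufficient: a residual $\|\Delta_m\|\lesssim \eps^2$ in a low-order moment contributes $\eps^4/\sigma^{2m}$ to the KL, which exceeds $1/n$ in part of the relevant range $n\gtrsim\sigma^{4s-2}$.

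The missing idea is thus the explicit construction: supporting the signal on the two \emph{coprime} frequencies $s-1$ and $s$ forces any zero-sum frequency multiset of size $m\le 2s-2$ to pair each frequency with its negative, so every surviving entry of the $m$th moment tensor depends only on $|\hat\theta_{s-1}|^2$ and $|\hat\theta_s|^2$ and is therefore invariant under the phase perturbation. Your proposal correctly flags the construction as ``the main obstacle'' but offers only a plausibility argument for its existence; without it (and without the corresponding elementary constructions for $s=0$, where one perturbs the mean, and $s=1$, where one perturbs the modulus to get the $\sigma^{s+1}$ exponent), the proof is not complete. There is also a small internal inconsistency in your exponent bookkeeping: with your own convention that matching the first $m$ moments yields rate $\sigma^{m+1}/\sqrt n$, the $\sigma^{2s-1}$ term requires matching up to order $2s-2$, not $2s-1$.
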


The $\sigma^{s+1}/\sqrt n$ rate in Theorem~\ref{thm:main_upper} holds only in the edge case when $s \in \{0, 1\}$; for $2 \leq s \leq \lfloor L/2 \rfloor$ the rate scales as $\sigma^{2s-1}/\sqrt n$.

In Section~\ref{sec:upper_proof}, we show that a modified version of the MLE achieves the optimal rate asymptotically for $2 \leq s \leq \lfloor L/2 \rfloor$. This estimator is also adaptive to the class $\cT_s$.
\begin{theorem}\label{thm:main_upper}
For any $\sigma \geq 1$ and $2 \leq s \leq \lfloor L/2 \rfloor$, the modified MLE $\check \theta_n$ for the phase shift model satisfies
\begin{equation}\label{eqn:minimax_upper}
\sup_{\theta \in \cT_s} \E_\theta[\metric(\check \theta_n, \theta)] \leq C' \frac{\sigma^{2s -1}}{\sqrt n} + C_\sigma \frac{\log n}{n}\,,
\end{equation}
where $C_\sigma \leq C'' \sigma^{12 s - 11}$ and $C'$ and $C''$ are constants depending on $L$ and $c_0$ but on no other parameter.
\end{theorem}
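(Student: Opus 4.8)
The plan is to establish the two terms in \eqref{eqn:minimax_upper} separately, treating the problem through the curvature of the Kullback--Leibler divergence as advertised in the introduction. The key object is the function $\phi \mapsto D(\dist{P}_\theta \parallel \dist{P}_\phi)$ near its minimizers (the orbit of $\theta$). First I would invoke the general machinery from Appendix~\ref{sec:tight_bounds}, which relates $D(\dist{P}_\theta\parallel\dist{P}_\phi)$ to a sum over $k$ of $\sigma^{-2k}$ times the squared distance between the degree-$k$ invariant moment tensors of $\theta$ and $\phi$; for the phase shift model these invariant tensors are governed by the Fourier support, and the claim $\supp(\hat\theta)\subset[s]$ forces the lowest-order tensor that genuinely separates two non-orbit-equivalent signals to be of degree roughly $2s-1$ (combining the power spectrum, which is degree $2$ but only pins down $|\hat\theta_j|$, with the degree-$3$ bispectrum and the higher invariants needed to recover the phases). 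This yields a \emph{lower} bound on the curvature of the form $D(\dist{P}_\theta\parallel\dist{P}_\phi) \gtrsim \sigma^{-2(2s-1)}\,\metric(\theta,\phi)^2$ in a neighborhood of the orbit, with constants depending only on $L$ and $c_0$ (here Assumption~\ref{assumption} is essential: it prevents degeneracies where $|\hat\theta_j|\to0$).

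Next I would feed this curvature bound into the uniform MLE analysis of Section~\ref{sec:uniform_upper}. The modified MLE $\check\theta_n$ differs from the raw MLE \eqref{eq:ML:mixtureGaussian} precisely to make the optimization tractable and to enforce adaptivity to $\cT_s$ (e.g.\ by first estimating the power spectrum to pick out the support and large coordinates, then optimizing the likelihood over the correspondingly restricted parameter set); I would show it is consistent at rate $o(1)$ and then localize. In a $\metric$-neighborhood of the orbit of $\theta$, a standard empirical-process / Taylor argument gives
\begin{equation*}
\metric(\check\theta_n,\theta)^2 \;\lesssim\; \frac{\text{(fluctuation of the empirical log-likelihood ratio)}}{\text{curvature}} \;\lesssim\; \frac{\sigma^{2(2s-1)}}{n}\cdot\frac{1}{\sigma^{?}}\cdot(\cdots),
\end{equation*}
so that taking square roots produces the leading $\sigma^{2s-1}/\sqrt n$ term. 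The second term $C_\sigma \log n / n$ arises from two sources that must be controlled more crudely: (i) the event that the preliminary support-estimation step fails, which happens with polynomially small probability and contributes a $1/n$-type term whose $\sigma$-dependence is worse (accounting for the $\sigma^{12s-11}$), and (ii) higher-order remainder terms in the expansion of the log-likelihood beyond the quadratic term, together with the bias introduced by the modification; these are lower-order in $n$ but inherit large powers of $\sigma$ from the moment bounds. A union bound over a net of the (low-dimensional, since restricted to $\cT_s$) parameter space converts the pointwise concentration into the uniform statement, and the $\log n$ factor is the usual price of that union bound.

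The main obstacle I anticipate is obtaining the \emph{sharp} power $2s-1$ in the curvature lower bound rather than merely some polynomial in $\sigma$. This requires showing not only that all invariant tensors of degree $< 2s-1$ are insufficient to separate signals (an algebraic fact about Fourier invariants under $U(1)$, akin to the classical result that the bispectrum recovers a signal up to shift) but also that the degree-$(2s-1)$ invariant provides \emph{quantitative} separation: $\|\text{moment}_{2s-1}(\theta) - \text{moment}_{2s-1}(\phi)\| \gtrsim \metric(\theta,\phi)$ uniformly over $\cT_s$. Establishing this uniform lower bound—essentially a stable version of the invariant-theoretic recovery statement—is the technical heart, and it is exactly where Assumption~\ref{assumption} and the Fourier-theoretic structure of the phase shift model are used; without them the relevant Jacobian could degenerate and the constant $C'$ (or the exponent) would blow up. A secondary difficulty is verifying that the \emph{modification} of the MLE does not destroy the curvature-driven rate, i.e.\ that restricting to an estimated support and to the large-coordinate subfamily is compatible with the local quadratic behavior; this is handled by showing the preliminary estimates are accurate enough with high probability that, on the good event, $\check\theta_n$ coincides with the constrained MLE over the true family $\cT_s$.
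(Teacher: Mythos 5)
Your plan matches the paper's proof essentially step for step: a sample-splitting scheme that first estimates $\supp(\hat\theta)$ from the empirical power spectrum (Proposition~\ref{prop:projection_construction}) and then runs an MLE constrained to that Fourier support; a curvature bound $D(\theta\parallel\phi)\gtrsim\sigma^{-2(2s-1)}\metric^2(\theta,\phi)$ obtained by showing some invariant moment tensor of order $m\le 2s-1$ separates $\theta$ and $\phi$ quantitatively (Proposition~\ref{prop:lambda_P_bound}, whose case analysis is exactly the ``stable invariant recovery'' step you flag as the technical heart); and the uniform MLE analysis of Theorem~\ref{thm:non_asymptotic} with $k=2s-1$, which produces both the $\sigma^{2s-1}/\sqrt n$ leading term and the $\sigma^{6k-5}=\sigma^{12s-11}$ factor in the $\log n/n$ remainder. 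The only minor discrepancy is one of attribution: in the paper the support-estimation failure event contributes only a dominated $\sigma^{5}/n$ term, while $\sigma^{12s-11}$ comes entirely from the slicing/Young's-inequality steps inside the MLE analysis rather than from the failure event.
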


Theorem~\ref{thm:main_upper} excludes the cases where $s = 0$ or $s = 1$.
The behavior of these cases is different, and is significantly easier to analyze.
\begin{theorem}\label{thm:small_s}
If $s \in 
\{0,1\}$ and $\sigma \geq 1$, then the phase shift model satisfies
\begin{equation*}
\inf_{T_n} \sup_{\theta \in \cT_s} \E_\theta[\metric(T_n, \theta)] \leq C''' \frac{\sigma^{s+1}}{\sqrt n}\,,
\end{equation*}
where $C'''$ is a constant depending on $c_0$ but on no other parameter.
\end{theorem}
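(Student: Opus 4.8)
The plan is to bypass the MLE machinery and exhibit a simple method-of-moments estimator, which is enough since the bound is stated over all estimators $T_n$. The structural fact to exploit is that for $s\le 1$ the signal $\theta$ lives on the three Fourier modes $\{-1,0,1\}$, and the only orbit-invariant information in $\theta$ is the real scalar $\hat\theta_0$ together with the nonnegative scalar $|\hat\theta_1|$: the action of $G_z$ merely rotates the phase of $\hat\theta_1$ (and, by conjugate symmetry, of $\hat\theta_{-1}$). I would therefore estimate $\hat\theta_0$ by the empirical first moment $\bar m_0:=\frac1n\sum_i \widehat{Y_i}_0$ and $|\hat\theta_1|^2$ by the debiased empirical second moment $\widehat m_1:=\big(\frac1n\sum_i|\widehat{Y_i}_1|^2\big)-\sigma^2$, then form the real signal $\check\theta$ whose only nonzero Fourier coefficients are $\widehat{\check\theta}_0=\bar m_0$ and $\widehat{\check\theta}_{\pm1}=\sqrt{(\widehat m_1)_+}\,\mathbf{1}\{\widehat m_1\ge c_0^2/2\}$, and finally rescale $\check\theta$ so that it lies in the ball $\{\phi:\|\phi\|\le c\}$. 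For $s=0$ the orbit of $\theta$ is a single point and only $\bar m_0$ is needed; since $\bar m_0-\hat\theta_0\sim\cN(0,\sigma^2/n)$, one gets $\E_\theta[\rho(\check\theta,\theta)]=\E|\bar m_0-\hat\theta_0|\le\sigma/\sqrt n$ immediately.

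For $s=1$, Parseval's identity together with minimization over the orbit (choosing $z$ to align the phase of $\hat\theta_1$ with the real number $\widehat{\check\theta}_1\ge0$, which simultaneously aligns $\hat\theta_{-1}$) yields $\rho(\check\theta,\theta)^2\le|\bar m_0-\hat\theta_0|^2+2\,\big|\,\widehat{\check\theta}_1-|\hat\theta_1|\,\big|^2$, hence $\E_\theta[\rho(\check\theta,\theta)]\le \sigma/\sqrt n+\sqrt2\,\E\big|\widehat{\check\theta}_1-|\hat\theta_1|\big|$. The main computation is the control of the second term. Writing $Y=G_z\theta+\sigma\xi$, the coordinate $\widehat\xi_1$ is a standardized circularly symmetric complex Gaussian (here I use that $L$ is odd and $L>2$), so conditionally on $z$ the variable $|\widehat Y_1|^2$ is $\sigma^2$ times a noncentral $\chi^2_2$; a short calculation then gives $\E[\widehat m_1]=|\hat\theta_1|^2$ (the conditional mean is free of $z$) and $\var(\widehat m_1)=(\sigma^4+2\sigma^2|\hat\theta_1|^2)/n=O(\sigma^4/n)$, using $\sigma\ge1$ and $|\hat\theta_1|\le\|\theta\|\le c$. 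If $1\in\supp(\hat\theta)$, Assumption~\ref{assumption} gives $|\hat\theta_1|^2\ge c_0^2$, and on the event $\{\widehat m_1\ge c_0^2/2\}$ the elementary inequality $\big|\sqrt{w}-\sqrt a\big|\le|w-a|/\sqrt a$ (valid for $a>0$) with $a=|\hat\theta_1|^2$, $w=\widehat m_1$ gives $\big|\widehat{\check\theta}_1-|\hat\theta_1|\big|\le c_0^{-1}|\widehat m_1-\E\widehat m_1|$, so that part contributes at most $c_0^{-1}\sqrt{\var(\widehat m_1)}=O(\sigma^2/\sqrt n)$; the complementary event has probability $O(\sigma^4/n)$ by Chebyshev and contributes at most $c\cdot O(\sigma^4/n)$.

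Two bookkeeping steps finish the argument. First, $\cT_1\supset\cT_0$, so the estimator must also be accurate when $\hat\theta_1=0$: there the threshold forces $\widehat{\check\theta}_1=0$ unless $\widehat m_1=\widehat m_1-\E[\widehat m_1]\ge c_0^2/2$, and Cauchy--Schwarz bounds the contribution of that event by $\big(\E|\widehat m_1-\E\widehat m_1|\big)^{1/2}\big(\p(\widehat m_1-\E\widehat m_1\ge c_0^2/2)\big)^{1/2}=O\big((\var\,\widehat m_1)^{3/4}\big)=O(\sigma^3/n^{3/4})$. Second, I split on whether $\sigma^{s+1}/\sqrt n<1$: in the complementary regime the rescaling onto $\{\|\phi\|\le c\}$, which does not increase $\rho(\cdot,\theta)$ because every $G_z\theta$ has norm $\|\theta\|\le c$, makes $\rho(\check\theta,\theta)\le 2c\le 2c\,\sigma^{s+1}/\sqrt n$ trivially; in the regime $\sigma^{s+1}/\sqrt n<1$, i.e.\ $n>\sigma^{2(s+1)}$, the two extra error terms $\sigma^4/n$ and $\sigma^3/n^{3/4}$ are themselves at most $\sigma^2/\sqrt n$, so altogether $\E_\theta[\rho(\check\theta,\theta)]=O(\sigma^{s+1}/\sqrt n)$ with a constant depending only on $c$ and $c_0$. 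I expect no deep obstacle here: the only genuine subtlety is reconciling adaptivity over the nested classes $\cT_0\subset\cT_1$ with the large-noise regime, and the moment identities are the routine noncentral-$\chi^2$ calculation—consistent with the paper's remark that the cases $s\in\{0,1\}$ are significantly easier.
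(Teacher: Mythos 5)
Your proposal is correct and follows essentially the same route as the paper: for $s=0$ the empirical mean, and for $s=1$ the same method-of-moments estimator built from the empirical mean of $\widehat{Y}_0$ and the thresholded square root of the debiased power-spectrum statistic $\frac1n\sum_i|\widehat{Y_i}_1|^2-\sigma^2$, with the same case split on whether $\hat\theta_1$ vanishes. The only differences are cosmetic---you control the $L^1$ error directly via Chebyshev and handle the regime $n\lesssim\sigma^4$ by clipping the estimator, whereas the paper bounds the $L^2$ error using a subexponential tail bound and simply discards that regime as vacuous---and your variance computation for the noncentral statistic matches the paper's Lemma on support estimation.
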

Theorem~\ref{thm:small_s} is proved in Appendix~\ref{supp}, where we exhibit a computationally efficient estimator achieving the upper bounds for $s \in \{0, 1\}$.

A few remarks are in order. We have given rates over the classes $\cT_s$ because, in the context of cryo-EM, it is generally assumed that band-limited signals, that is, signals lying in $\cT_s$ for $s$ small, are easier to estimate. Our work offers partial validation for this view. However, we stress that the dependence on $s$ present in Theorems~\ref{thm:main_lower} and~\ref{thm:main_upper} is a consequence of the minimax paradigm. Indeed, our proof of the lower bound involves a class of signals with very specific support in the Fourier domain. Such signals drive the worst case bound of order $\sigma^{2s-1}/\sqrt{n}$.
This is in striking contrast to the behavior for signals which are likely to arise in practice---in a companion paper~\cite{PerWeeBan17}, we show that signals whose Fourier transform has full support can be estimated at the rate $\sigma^3/\sqrt n$.

Second, our proof techniques do not allow us to remove the $\sigma$ dependence of the term  $C_\sigma \log n/n$ in the upper bound. In particular, for small values of $n$, this term may actually dominate. We conjecture that this issue is an artifact of our proof technique and note that preliminary numerical results in~\cite{PerWeeBan17} support this claim.

Third, though we focus on the ``high-noise regime,'' we note that Theorems~\ref{thm:main_lower} and~\ref{thm:main_upper} do not require that $\sigma \to \infty$; we merely require that $\sigma$ be bounded below by a (small) constant.

The rest of this paper is devoted to the proof of the main results in Theorems~\ref{thm:main_lower} and~\ref{thm:main_upper}. 

\section{Maximum likelihood estimation}\label{sec:uniform_upper}

Let $Y_1, \dots, Y_n$ be i.i.d observations from the phase shift model~\eqref{eq:phase_shift} and consider the MLE $\tilde \theta_n$ that was defined in~\eqref{eq:ML:mixtureGaussian}. In this section, we prove our main statistical result, that is  a \emph{uniform}  upper bound on the rate of convergence of the MLE in terms of the curvature of the divergence $D(\theta \parallel \phi)$ near its minimum. Note that this analysis departs from the classical \emph{pointwise} rate of convergence for MLE that guarantees a rate of convergence $n^{-1/2}$ \emph{for each fixed choice of parameter} as $n \to \infty$. Our tools strengthen this result considerably. Indeed, we show that for reasonable choices of $\theta$, the MLE achieves a rate of $n^{1/2}$ \emph{uniformly} over all choices of $\theta$.
We refer the reader to~\cite{HeiKah15} for examples of Gaussian mixture problems where the pointwise and uniform rates of estimation differ.

The following theorem establishes an upper bound for the MLE under a general lower bound for the KL divergence for any algebraically structured model.
Our proof technique applies to any subgroup $\cG$ of $\mathrm{O}(L)$ and 
can be broadly applied to derive uniform rates of convergence for the MLE from the tight bounds on the KL divergence given in Theorem~\ref{thm:tight_KL_bound}.
In the following section, we specialize this result to obtain the minimax upper bounds for the phase shift model over $\cT_s$ that are presented in Theorem~\ref{thm:main_upper}.

From here on, positive constants may depend on $L$ unless noted otherwise.

\begin{theorem}\label{thm:non_asymptotic}
Let $\cT$ be any subspace of $\R^L$.
Assume that there exist $k \geq 1$ and positive constants $c$ and $C$ such that for all $\theta, \phi \in \cT$ satisfying $c^{-1} \leq \|\theta\| \leq c$ and $\sigma \in \R$ satisfying $\sigma \geq \|\theta\|$,
\begin{equation}
D(\theta \parallel \phi)\geq C \sigma^{-2k}\rho^2(\theta, \phi)\,.\label{EQ:universal_curvature}
\end{equation}
Then there exists positive constants $C'$ and $C''$ such that the MLE $\tilde \theta_n$ constrained to lie in $\cT$ satisfies
\begin{equation}\label{eq:theorem:non_asymptotic}
\E_\theta[\metric(\tilde \theta_n, \theta)] \leq C'\frac{\sigma^{k}}{\sqrt{n}} + C_\sigma\frac{\log n}{n}\,,
\end{equation}
uniformly over $\theta \in \cT$ satisfying $c^{-1} \leq \|\theta\| \leq c$ and $\sigma \in \R$ satisfying $\sigma \geq \|\theta\|$, where $C_\sigma\le C''\sigma^{6k-5}$.
\end{theorem}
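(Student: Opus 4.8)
The plan is to run the classical ``basic inequality'' argument for the maximum likelihood estimator and to track the $\sigma$-dependence at each step by exploiting the Gaussian-mixture structure of $\dist P_\phi$. Write $\ell_\phi$ for the log-density of $\dist P_\phi$ under~\eqref{eq:MRA:model} and set $g_\phi:=\E_\theta[\ell_\phi(Y)-\ell_\theta(Y)]=-D(\theta\parallel\phi)$, a deterministic function of $\phi$. Since the constrained MLE $\tilde\theta_n$ maximizes $\phi\mapsto\sum_{i=1}^n\ell_\phi(Y_i)$ over $\cT$, the inequality $\tfrac1n\sum_i(\ell_{\tilde\theta_n}(Y_i)-\ell_\theta(Y_i))\ge 0$ rearranges into
\begin{equation*}
D(\theta\parallel\tilde\theta_n)\ \le\ \mathbb{G}_n(\tilde\theta_n)\,,\qquad \mathbb{G}_n(\phi):=\frac1n\sum_{i=1}^n\bigl(\ell_\phi(Y_i)-\ell_\theta(Y_i)\bigr)-g_\phi\,.
\end{equation*}
It therefore suffices to control the centered empirical process $\mathbb{G}_n$ at the random point $\tilde\theta_n$, after which the curvature hypothesis~\eqref{EQ:universal_curvature} turns the resulting bound on $D(\theta\parallel\tilde\theta_n)$ into the loss bound $\rho^2(\tilde\theta_n,\theta)\le C^{-1}\sigma^{2k}D(\theta\parallel\tilde\theta_n)$. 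The standard device is to pass to the \emph{penalized} process: since $\mathbb{G}_n(\tilde\theta_n)-\tfrac12 D(\theta\parallel\tilde\theta_n)\le\sup_\phi[\mathbb{G}_n(\phi)-\tfrac12 D(\theta\parallel\phi)]$, one gets $\tfrac12 D(\theta\parallel\tilde\theta_n)\le\sup_\phi[\mathbb{G}_n(\phi)-\tfrac12 D(\theta\parallel\phi)]$, and subtracting half of the divergence makes the bound self-localizing, so that no peeling over dyadic shells is needed.

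Two preliminary reductions use the mixture structure. Because $\|G\phi\|=\|\phi\|$ for $G\in\cG$, one has the closed form $\ell_\phi(y)-\ell_\theta(y)=\tfrac{\|\theta\|^2-\|\phi\|^2}{2\sigma^2}+\log\E_G\exp(\langle y,G\phi\rangle/\sigma^2)-\log\E_G\exp(\langle y,G\theta\rangle/\sigma^2)$. First, joint convexity of the KL divergence applied to the common Haar mixing measure (after replacing $\phi$ by the group element minimizing $\|{\cdot}-\theta\|$) gives the crude upper bound $D(\theta\parallel\phi)\le\rho^2(\theta,\phi)/(2\sigma^2)$; in particular $D(\theta\parallel\phi)<\infty$. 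Second, since the displayed expression is affine in $y$ up to bounded log-partition terms and $\|Y\|$ concentrates near $\sigma\sqrt L$ under $\dist P_\theta$, a Wald-type consistency argument (using the curvature lower bound, valid for all $\phi\in\cT$, to separate far competitors) shows that with probability $1-O(n^{-2})$ the MLE satisfies $\rho(\tilde\theta_n,\theta)\le R$ for an absolute constant $R$; on the complementary event the trivial bound $\rho(\tilde\theta_n,\theta)\lesssim\max_i\|Y_i\|$ contributes only a negligible term to $\E_\theta[\rho(\tilde\theta_n,\theta)]$. We may therefore restrict the supremum above to $\Phi:=\{\phi\in\cT:\rho(\phi,\theta)\le R\}$, an at most $L$-dimensional bounded set.

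The quantitative core is a pair of bounds on the increments of $\mathbb{G}_n$ over $\Phi$, obtained by expanding the closed form to second order in $\phi-\theta$ and controlling the remainder with the tight KL estimates of Theorem~\ref{thm:tight_KL_bound} together with sub-Gaussian concentration of $\|\xi\|$: a variance bound $\var_\theta(\ell_\phi(Y)-\ell_\theta(Y))\lesssim D(\theta\parallel\phi)$ and a matching sub-exponential (Bernstein) tail bound $\|\ell_\phi(Y)-\ell_\theta(Y)-g_\phi\|_{\psi_1}\lesssim\sigma^{c(k-1)}\sqrt{D(\theta\parallel\phi)}$ for an explicit absolute constant $c$ (the power of $\sigma$ reflecting the flatness quantified by $k$; it is trivial when $k=1$, where the linear score term dominates). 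Feeding these into a chaining bound for the penalized process over $\Phi$ — the sub-Gaussian part governed by the variance proxy $D(\theta\parallel\phi)$, whose square-root entropy integral over a finite-dimensional ball is log-free and thus yields the $n^{-1/2}$ rate, and the sub-exponential part yielding a lower-order contribution — one obtains, with probability at least $1-e^{-t}$,
\begin{equation*}
D(\theta\parallel\tilde\theta_n)\ \lesssim\ \frac{L+t}{n}\ +\ \sigma^{2c(k-1)}\Bigl(\frac{L+t}{n}\Bigr)^2\,.
\end{equation*}
Applying~\eqref{EQ:universal_curvature}, taking square roots, and integrating the tail over $t\ge 0$ (a careful bound on the tails of the log-likelihood ratio over the $n$ samples producing the logarithmic factor in the second term) gives $\E_\theta[\rho(\tilde\theta_n,\theta)]\lesssim\sigma^k/\sqrt n+\sigma^{k+c(k-1)}\log n/n$, which is the claimed bound with $C',C''$ depending only on $L$ and $C_\sigma\le C''\sigma^{6k-5}$ once the exponent $c$ is pinned down.

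I expect the main obstacle to be precisely the last ingredient: establishing the variance and $\psi_1$ bounds for $\ell_\phi-\ell_\theta$ with the sharp powers of $\sigma$, \emph{uniformly} over all $\theta\in\cT$ with $c^{-1}\le\|\theta\|\le c$ and over the competitor $\phi\in\Phi$, using only $\sigma\ge\|\theta\|$. This is where the fine control of the KL divergence for algebraically structured models (Theorem~\ref{thm:tight_KL_bound}) does the real work; the empirical-process steps — the basic inequality, the self-localizing penalty, and the chaining bound over a finite-dimensional set — are routine once those inputs are in hand.
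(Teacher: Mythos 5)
Your overall architecture --- the basic inequality for the MLE, localization to a neighborhood of $\theta$, and empirical-process control of the centered log-likelihood --- is sound and broadly parallel to the paper's. But the proposal outsources the entire difficulty to two asserted inputs that are neither proved nor routine: the variance bound $\var_\theta(\ell_\phi(Y)-\ell_\theta(Y))\lesssim D(\theta\parallel\phi)$ and the matching $\psi_1$ bound, \emph{uniformly over a constant-radius ball} $\Phi$. In the flat directions responsible for $k$ being large, the variance proxy for the increments that is actually provable by the Lipschitz argument (Lemma~\ref{lem:logsumexp_lipschitz}, which is exactly what Lemma~\ref{LEM:subG} uses) is $\|\phi-\theta\|^2/\sigma^2$, which exceeds $D(\theta\parallel\phi)\asymp\sigma^{-2k}\rho^2(\theta,\phi)$ by a factor $\sigma^{2k-2}$ at distances of order one. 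Upgrading this to $\var\lesssim D$ would require exhibiting cancellations in the fluctuations of the log-likelihood ratio to all orders up to $k$ --- in effect a second, harder analogue of Theorem~\ref{thm:tight_KL_bound} for the variance rather than the mean of $\ell_\phi-\ell_\theta$. Nothing in the paper supplies this, and your closing remark that the main obstacle is "precisely the last ingredient" is accurate: that ingredient \emph{is} the theorem.

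The claim that the penalized process is "self-localizing, so that no peeling over dyadic shells is needed" is contingent on that same unproven variance bound: with only the provable proxy $\rho^2/\sigma^2$, the penalty $\tfrac12 D\geq \tfrac{C}{2}\sigma^{-2k}\rho^2$ dominates the fluctuation $\sqrt{\rho^2/(\sigma^2 n)}$ only when $\rho\gtrsim\sigma^{2k-1}/\sqrt n$, which is exactly the localization that peeling formalizes (and which the paper carries out in Lemma~\ref{eventA}, obtaining only $\E[\rho^2]\lesssim\sigma^{4k-2}/n$, far from the $1/n$-type control of $D$ you assert). The paper's route avoids your hard step entirely: it localizes to a ball of radius $\eps\asymp\sigma^{3-2k}$ (not constant radius), where a third-order Taylor expansion (Lemma~\ref{lem:d_taylor}) together with the Bartlett identities yields the sharp first-order term $\E\|\nabla D_n(\theta)\|_H^*\leq\sqrt{L/n}$ in the Fisher metric, controls the Hessian fluctuation crudely by matrix concentration (Lemma~\ref{lem:bdHessian}, the source of the $\sigma^{6k-5}\log n/n$ term), and disposes of the complement of the localization event via the peeled second-moment bound. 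Finally, the exponent $c$ in your $\psi_1$ bound is left undetermined, so the stated constant $C_\sigma\le C''\sigma^{6k-5}$ is not actually derived. As written, the proof has a genuine gap at its core.
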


\begin{proof}

The symbols $c$ and $C$  denote constants whose value may change from line to line.
In the rest of this proof, we write $\tilde \theta=\tilde \theta_n$ to denote the constrained MLE.
Since $\theta$ and $\tilde \theta$ are both constrained to lie in $\cT$, we  restrict all functions of this proof to this subspace without loss of generality.
By rescaling by a constant, we can assume $\|\theta\| = 1$ and $\sigma \geq 1$.

The  proof strategy is to combine control of the curvature of the function $D$ with control of the deviations of the log-likelihood function.

Define the event $\cE=\{\metric(\tilde \theta, \theta) \le \eps\}$ where $\eps$ is to be specified. Since $D$ is invariant under the action of $\cG$, we can assume without loss of generality that $\metric(\tilde \theta, \theta) = \|\tilde \theta - \theta\|$.
We first establish that on this event, $\|\tilde \theta - \theta\|$ can be controlled in terms of the metric induced by the Hessian of $D$ at $\theta$.

Fix $\theta \in \cT$ and denote by $H$ the Hessian of the function $\phi \mapsto D(\theta \parallel \phi)$
evaluated at $\phi = \theta$.
For any $u \in \R^L$, define $\|u\|_H = \sqrt{u^\top H u}$.

It follows from a Taylor expansion (Lemma~\ref{lem:d_taylor} in Appendix~\ref{supp}) that on $\cE$,
\begin{equation*}
\Big|D(\theta \parallel\tilde \theta) - \frac 1 2 \|\tilde \theta -\theta\|_H^2\Big| \leq C\frac{\|\tilde \theta -\theta\|^3}{\sigma^3}<\frac12D(\theta \parallel\tilde \theta)\,.
\end{equation*}
as long as $\ep \leq c \sigma^{3 - 2k}$ for some sufficiently small constant $c$. This yields
\begin{equation}
\frac 1 3 \|\tilde \theta - \theta\|_H^2 \leq D(\theta \, \| \, \tilde \theta) \leq \|\tilde \theta - \theta\|_H^2\,, \label{eqn:d_to_h_bound}
\end{equation}
and, by~\eqref{EQ:universal_curvature},
\begin{equation}
\|\tilde \theta - \theta\|_H^2 \geq c \sigma^{-2k} \|\tilde \theta - \theta\|^2\,. \label{eqn:rho_to_h_bound}
\end{equation}
for some constant $c$.

We now control the geometry of the log-likelihood function near $\theta$.
Define
\begin{equation*}
D_n(\theta \parallel \phi) = \frac 1 n \sum_{i=1}^n \log \frac{f_\theta}{f_\phi}(Y_i)\,,
\end{equation*}
where $Y_i$ are i.i.d from $\dist{P}_\theta$ and $f_\zeta$ is the density of $\dist{P}_\zeta, \zeta \in \R^L$.
Note that $D_n(\theta \parallel \theta) = 0$ and recall that $\tilde \theta$ minimizes $\phi \mapsto D_n(\theta \parallel \phi)
$ so that $D_n(\theta \parallel \tilde \theta) \leq 0$.

Since $\theta$ is held fixed throughout the proof, we abbreviate $D(\theta \parallel \phi)$ and $D_n(\theta \parallel \phi)$ as $D(\phi)$ and $D_n(\phi)$, respectively.

Using Taylor expansion and  $D(\theta)=D_n(\theta)=\nabla D(\theta)=0$, we get
\begin{equation*}
D(\tilde \theta) - D_n(\tilde \theta) = -\nabla D_n(\theta)^\top h + \frac 1 2 h^\top \nabla^2 (D - D_n)(\bar \theta)h\,,
\end{equation*}
where $h=\tilde \theta -\theta$ and $\bar \theta$ lies on a segment between $\tilde \theta$ and $\theta$.

For all $\zeta \in \cT$, write $H_n(\zeta)$ for the Hessian of $D_n(\phi)$ evaluated at $\phi = \zeta$, and similarly let $H(\zeta)$ be the Hessian of $D(\phi)$ evaluated at $\phi = \zeta$.

Combining the above equation with~\eqref{eqn:d_to_h_bound} and the fact that $D_n(\tilde \theta) \leq 0$ yields
\begin{equation}\label{eqn:second_order_expansion}
\frac 1 3 \|h\|_H^2 \leq D(\tilde \theta) - D_n(\tilde \theta) \leq - \nabla D_n(\theta)^\top h + \frac 1 2 h^\top (H(\bar \theta) - H_n(\bar \theta)) h\,.
\end{equation}

For the first term, we employ the bound $|\nabla D_n(\theta)^\top h| \leq \|\nabla D_n(\theta)\|_H^* \|h\|_H$, where $\|\cdot\|_H^*$ denotes the dual norm to $\|\cdot\|_H$.

To control the second, note first that by~\eqref{eqn:rho_to_h_bound}, it holds  $\|h\| \le C\sigma^k \|h\|_H$. Therefore,
\begin{equation*}
h^\top (H(\bar \theta) - H_n(\bar \theta)) h \leq C\sigma^k\|h\|_H\|h\|  \sup_{\phi \in \cB_\eps} \|H(\phi) - H_n(\phi)\|_\text{op} \,,
\end{equation*}
where $\cB_\eps:=\{\phi \in \R^L\,:\, \rho(\phi, \theta) \le \eps\}$.

Combining the above bounds and dividing by $\|h\|_H$, we get that on ${\cE}$, 
\begin{align*}
\sigma^{-k}\|h\| \leq C \|h\|_H & \leq C\|\nabla D_n(\theta)\|_{H}^* + C\sigma^k\|h\|  \sup_{\phi \in \cB_\eps} \|H(\phi) - H_n(\phi)\|_\text{op}\\
& \leq C\|\nabla D_n(\theta)\|_{H}^* + C\sigma^{k+3} \sup_{\phi \in \cB_\eps} \|H(\phi) - H_n(\phi)\|_\text{op}^2+ \sigma^{k-3}\|h\|^2\,,
\end{align*}
where we applied Young's inequality.

Since $\ep^{-1}\metric(\tilde \theta, \theta) \geq 1$ on $\cE^c$, we get
\begin{align*}
\E[\metric(\tilde \theta, \theta)] &= \E[\metric(\tilde \theta, \theta)\1_{\cE}] + \E[\metric(\tilde \theta, \theta)\1_{{\cE}^c}]\\
&\le  C\sigma^{k}\E[\|h\|_H\1_{\cE}] +\eps^{-1}{\E[\metric^2(\tilde \theta, \theta)]}\,.
\end{align*}

Choose $\ep = c \sigma^{3-2k}$ for some small constant $c$.
We obtain
\begin{equation}
\label{EQ:pr_mainbd}
\E[\metric(\tilde \theta, \theta)]  \le C\Big(\sigma^k\E\|\nabla D_n(\theta)\|_{H}^* + \sigma^{2k+3} \E\sup_{\phi \in \cB_\eps} \|H(\phi) - H_n(\phi)\|_\text{op}^2+ \sigma^{2k-3}\E[\metric^2(\tilde \theta, \theta)]\Big)
\end{equation}

It suffices to control the right side of the above inequality.
The main term is the first one.
We note that if $H$ were invertible, and hence $\|\cdot\|_H$ a genuine metric, then it is well known (see, e.g.,~\cite{HirLem01short}) that
$
\|\nabla D_n(\theta)\|_H^*=\|\nabla D_n(\theta)\|_{H^{-1}}\,.
$
In general, $H$ is not invertible, but we still have 
\begin{equation*}
\|u\|_H^* = \left\{\begin{array}{ll}
\sqrt{u^\top H^\dagger u} & \text{ if $u$ lies in the row space of $H$,} \\
\infty & \text{ otherwise,}
\end{array}\right.
\end{equation*}
where $H^\dagger$ denotes the Moore-Penrose pseudo-inverse of the matrix $H$.
The Bartlett identities state that
\begin{align*}
\E[\nabla_\phi \log f_\phi(Y_i)|_{\phi = \theta}] & = 0\,,\\
\E[(\nabla_\phi \log f_\phi(Y_i)|_{\phi = \theta}) (\nabla_\phi \log f_\phi(Y_i)|_{\phi = \theta})^\top] & = H\,,
\end{align*}
and since $\nabla D_n(\theta) = \frac 1 n \sum_{i=1}^n \nabla_\phi \log f_\phi(Y_i)|_{\phi = \theta}$ and $Y_1, \dots, Y_n$ are independent, we obtain
\begin{equation*}
\E[\nabla D_n(\theta) \nabla D_n(\theta)^\top] = \frac 1 n H\,.
\end{equation*}
In particular, $\nabla D_n(\theta)$ lies in the row space of $H$ almost surely. Jensen's inequality implies 
\begin{equation}
\label{EQ:bdGradient}
\E\|\nabla D_n(\theta)\|_{H}^* \le \Big(\tr( H^{\dagger} \E[\nabla D_n(\theta) \nabla D_n(\theta)^\top])\Big)^{1/2}  = \Big(\frac 1 n \tr(H^{\dagger} H)\Big)^{1/2}  \leq \sqrt{\frac L n}\,.
\end{equation}

For the second term, standard matrix concentration bounds can be applied to show 
\begin{equation}
\label{EQ:bdHessian}
\E\sup_{\phi \in \cB_\eps} \|H(\phi) - H_n(\phi)\|_\text{op}^2 \leq C \frac{\log n}{n\sigma^4}\,.
\end{equation}
A proof of~\eqref{EQ:bdHessian} appears as Lemma~\ref{lem:bdHessian} in Appendix~\ref{supp}.

Likewise, a standard slicing argument, Lemma~\ref{eventA} in Appenddix~\ref{supp}, implies
\begin{equation}
\label{EQ:slicing_ub}\E[\metric^2(\tilde \theta, \theta)] \le C\frac{\sigma^{4k-2}}{n}\,.
\end{equation}

Plugging~\eqref{EQ:bdGradient}, \eqref{EQ:bdHessian}, and~\eqref{EQ:slicing_ub} into~\eqref{EQ:pr_mainbd}, we get
$$
\E[\metric(\tilde \theta, \theta)]  \le C\Big(\frac{\sigma^k}{\sqrt{n}} + \frac{\sigma^{2k-1}\log n}{n}+ \frac{\sigma^{6k-5}}{n}\Big)\,,
$$
as desired.
\end{proof}

\section{Minimax upper bounds}
\label{SEC:modMLE}
In this section, we apply the results of Section~\ref{sec:uniform_upper} to the phase shift model~\eqref{eq:phase_shift}. Note that rather than the MLE, we study a constrained MLE because the lower bound~\eqref{EQ:universal_curvature} may only hold for a proper subset $\cT \subset \R^L$ in Theorem~\ref{thm:non_asymptotic}.
This phenomenon does occur in the specific case of phase shifts: the divergence $D(\phi)$ is not curved enough in directions that perturb a null Fourier coefficient of $\theta$. To overcome this limitation, we split the sample $Y_1, \ldots, Y_n$ into two parts: with the first part we estimate the support of $\hat \theta$ under Assumption~\ref{assumption} and with the second part, we compute a maximum likelihood estimator constrained to have the estimated support.

Specifically, assume for simplicity that we have a sample $Y_1, \ldots, Y_{2n}$ of size $2n$ and split it into two samples $\cY_1=\{Y_1, \ldots, Y_n\}$ and $\cY_2=\{Y_{n+1}, \ldots, Y_{2n}\}$ of equal size. 

\subsection{Fourier support estimation}

We use the first subsample $\cY_1$ to  construct a set $\tilde S$ that coincides with $\supp(\hat \theta)$ with high probability.
For any $j =1, \ldots, \lfloor L/2\rfloor $, define,
$$
M_j = \frac 1 n \sum_{i=1}^n |\widehat{(Y_i)}_j|^2 - \sigma^2\,.
$$
Recall that, by Assumption~\ref{assumption}, there exists a positive constant $c_0$ such that $|\hat \theta_j| \geq c_0$ for all $j \in \supp(\hat \theta)$.
Define the set $\tilde S$ by
$$
\tilde S=\Big\{ j \in \{1, \ldots, \lfloor L/2\rfloor\}\,:\,  M_j \geq \frac 12 c_0^2|  \Big\}
$$

The following proposition shows that  $\tilde S=\supp(\hat \theta)$ with high probability.

\begin{proposition}\label{prop:projection_construction}
There exists a positive constant $c$ depending on $c_0$ such that 
\begin{equation*}
\p[\tilde S \neq \supp(\hat \theta)] \le   2L \exp(-c n \sigma^{-4})\,.
\end{equation*}
\end{proposition}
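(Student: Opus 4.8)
The strategy is a standard concentration-plus-union-bound argument, exploiting the fact that $M_j$ is an unbiased estimator of $|\hat\theta_j|^2$. First I would verify the unbiasedness: since $\widehat{(Y_i)}_j = z_i^j \hat\theta_j + \sigma \hat\xi_{i,j}$ where $\hat\xi_{i,j}$ is (complex) Gaussian noise coming from the unitary Fourier transform of the real Gaussian vector $\xi_i$, one has $\E|\widehat{(Y_i)}_j|^2 = |\hat\theta_j|^2 + \sigma^2$, so $\E[M_j] = |\hat\theta_j|^2$. Thus $M_j \geq \frac12 c_0^2$ is, on average, satisfied precisely when $j \in \supp(\hat\theta)$ (where $|\hat\theta_j|^2 \geq c_0^2$) and violated when $j \notin \supp(\hat\theta)$ (where $|\hat\theta_j|^2 = 0$); the threshold $\frac12 c_0^2$ is chosen to sit strictly between these two regimes with a constant gap.

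\textbf{Concentration step.} The main work is to show that for each fixed $j$, $\p\big[|M_j - |\hat\theta_j|^2| \geq \tfrac14 c_0^2\big] \leq 2\exp(-c n \sigma^{-4})$ for a constant $c$ depending on $c_0$. Write $M_j - |\hat\theta_j|^2 = \frac1n \sum_{i=1}^n \big(|\widehat{(Y_i)}_j|^2 - \sigma^2 - |\hat\theta_j|^2\big)$, a sum of $n$ i.i.d.\ centered terms. Each term, conditionally on $z_i$, is a quadratic form in a Gaussian vector minus its mean: concretely $|\widehat{(Y_i)}_j|^2 = |z_i^j \hat\theta_j + \sigma \hat\xi_{i,j}|^2$, which expands into a constant, a linear-in-Gaussian piece (sub-Gaussian with parameter $O(\sigma \|\theta\|) = O(\sigma)$ using Assumption~\ref{assumption:norm} and $\sigma \geq 1$) and a quadratic-in-Gaussian piece $\sigma^2(|\hat\xi_{i,j}|^2 - 1)$ (sub-exponential with parameter $O(\sigma^2)$). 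So each summand is sub-exponential with parameter $O(\sigma^2)$, and Bernstein's inequality for sums of i.i.d.\ sub-exponential random variables gives
\begin{equation*}
\p\Big[\big|M_j - |\hat\theta_j|^2\big| \geq t\Big] \leq 2\exp\Big(-c n \min\Big\{\frac{t^2}{\sigma^4}, \frac{t}{\sigma^2}\Big\}\Big)\,.
\end{equation*}
Taking $t = \frac14 c_0^2$ a constant, and using $\sigma \geq 1$ so that $t^2/\sigma^4 \leq t/\sigma^2$, the right side is at most $2\exp(-c' n \sigma^{-4})$ with $c'$ depending only on $c_0$.

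\textbf{Union bound and conclusion.} On the event $\bigcap_{j=1}^{\lfloor L/2\rfloor} \{|M_j - |\hat\theta_j|^2| < \tfrac14 c_0^2\}$, every $j \in \supp(\hat\theta)$ has $M_j > c_0^2 - \tfrac14 c_0^2 > \tfrac12 c_0^2$ so $j \in \tilde S$, and every $j \notin \supp(\hat\theta)$ has $M_j < \tfrac14 c_0^2 < \tfrac12 c_0^2$ so $j \notin \tilde S$; hence $\tilde S = \supp(\hat\theta)$ on this event. A union bound over the at most $\lfloor L/2 \rfloor \leq L$ indices gives $\p[\tilde S \neq \supp(\hat\theta)] \leq 2L \exp(-cn\sigma^{-4})$, as claimed.

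\textbf{Main obstacle.} The only delicate point is bookkeeping the sub-exponential norm of the summands: one must confirm the quadratic-Gaussian part contributes the dominant $\sigma^2$ scale (the linear part is lower order since $\sigma \geq 1$ and $\|\theta\|$ is bounded) and that the complex/real Fourier conventions give genuinely the variance structure claimed—i.e.\ that $\hat\xi_{i,j}$ has the right (sub-)Gaussian tails after the unitary DFT of a standard real Gaussian vector. This is routine but is where the $\sigma^{-4}$ exponent (rather than $\sigma^{-2}$) enters, and it should be flagged as the source of that rate.
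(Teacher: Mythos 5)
Your proposal is correct and follows essentially the same route as the paper's proof (given as Lemma~\ref{lem:projection_construction} in the appendix): show $\E[M_j]=|\hat\theta_j|^2$, establish that each $|\widehat{(Y_i)}_j|^2$ is $O(\sigma^2)$-sub-exponential so that $M_j$ satisfies a Bernstein-type tail bound of order $\exp(-cn\sigma^{-4})$ at constant deviation, and finish with a union bound over the $O(L)$ frequencies. The only cosmetic difference is that you obtain sub-exponentiality by expanding the square into its linear and quadratic Gaussian parts, whereas the paper dominates $|\widehat{(Y_i)}_j|$ by a $\sigma$-Lipschitz function of $\xi_i$ and invokes Gaussian concentration; both yield the same parameter.
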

\begin{proof}
This follows from standard concentration arguments. A full proof appears in Appendix~\ref{supp}.
\end{proof}

\subsection{Constrained MLE}

We use the second sample to construct a constrained MLE. To that end, for any $S \subset  \{1, \ldots, \lfloor L/2\rfloor\}$,  define the projection $P_S$ by
\begin{equation*}
\widehat{P_S(\phi)}_j = \left\{\begin{array}{ll}
\hat \phi_j & \text{ if $j \in S \cup -S$} \\
\hat \phi_0 & \text{ if $j = 0$} \\
0 & \text{ otherwise.}
\end{array}
\right.
\end{equation*}
The image $\im(P_{\tilde S})$ of $P_S$ is a $(2|S|+1)$-dimensional real vector space.
For convenience, write $\phi_S = P_S \phi$ for any vector $\phi \in \R^L$.

Having constructed the set $\tilde S$, we use the samples in $\cY_2$ to calculate a modified MLE $\check \theta_n$ constrained to lie in the subspace $\im(P_{\tilde S})$. To analyze the performance of this constrained MLE, we check that~\eqref{EQ:universal_curvature} holds on this subspace.

\begin{proposition}\label{prop:lambda_P_bound}
Fix  $2 \leq s \leq \lfloor L/2 \rfloor$ and $\theta \in \cT_s$. Let $S = \supp(\hat \theta)$.
If $\cT = \im(P_S)$, then there exists $C>0$ such that
for all $\sigma\ge \|\theta\|, \phi \in \cT$, it holds
\begin{equation}
D(\theta \parallel \phi)\geq C \sigma^{-4s+2}\rho^2(\theta, \phi) \,.\label{EQ:shift_universal_curvature}
\end{equation}
\end{proposition}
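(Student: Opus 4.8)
The plan is to reduce the divergence $D(\theta \parallel \phi)$ to a lower bound involving a difference of low-order invariant moment tensors, and then show that these tensors control $\rho^2(\theta,\phi)$ when $\theta,\phi$ share the same small Fourier support $S$ with $|S| = s$. First I would invoke the tight local expansion of the KL divergence for algebraically structured models — this is exactly the content of the bounds referenced as Theorem~\ref{thm:tight_KL_bound}/Theorem~\ref{thm:non_asymptotic} and Appendix~\ref{sec:tight_bounds} — which says, roughly, that up to constants
\[
D(\theta \parallel \phi) \gtrsim \sum_{k\ge 1} \frac{1}{\sigma^{2k} k!}\,\big\|\overline{\theta^{\otimes k}} - \overline{\phi^{\otimes k}}\big\|^2\,,
\]
where $\overline{\theta^{\otimes k}} = \int_{\cG} (G\theta)^{\otimes k}\,dG$ is the $\cG$-averaged moment tensor of order $k$ (for the phase-shift group $\cG = U(1)$ acting in the Fourier domain, this averaging kills all frequency-imbalanced monomials). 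Keeping only the dominant term among $k=1,\dots,2s-1$ and discarding the rest gives $D(\theta\parallel\phi) \gtrsim \sigma^{-(4s-2)}\,\min_{1\le k\le 2s-1}\|\overline{\theta^{\otimes k}} - \overline{\phi^{\otimes k}}\|^2$, since $\sigma\ge\|\theta\|\ge c^{-1}$ makes $\sigma^{-2k}$ smallest at $k=2s-1$; the case $k=2s-1$ is the relevant one because of the support constraint (see below).

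The crux is then a purely algebraic statement: for $\theta,\phi \in \im(P_S)$ with $\|\theta\|,\|\phi\|$ bounded and bounded below, if the averaged moment tensors of orders $1$ through $2s-1$ agree, $\overline{\theta^{\otimes k}} = \overline{\phi^{\otimes k}}$ for all $k \le 2s-1$, then $\rho(\theta,\phi) = 0$, i.e.\ $\phi = G\theta$ for some $G\in\cG$. Granting this identifiability fact, a compactness/continuity argument on the (compact, after quotienting by $\cG$) set $\cT_s \cap \im(P_S)$ upgrades it to the quantitative bound $\min_{1\le k\le 2s-1}\|\overline{\theta^{\otimes k}} - \overline{\phi^{\otimes k}}\|^2 \gtrsim \rho^2(\theta,\phi)$ with a constant depending on $L$ and $c_0$ — which is all that is claimed. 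Concretely I would argue by contradiction: if no such constant existed there would be sequences $\theta_m,\phi_m$ with $\rho(\theta_m,\phi_m)$ bounded away from $0$ but all low-order moment discrepancies $\to 0$; passing to convergent subsequences (using Assumptions~\ref{assumption:norm} and~\ref{assumption} to stay in a compact set) and using continuity of the moment maps and of $\rho$ contradicts the identifiability statement.

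The identifiability input itself is where the Fourier-theoretic content lives, and I expect it to be the main obstacle to state cleanly. In the Fourier domain the order-$k$ averaged moment tensor records exactly the phase-invariant degree-$k$ monomials in $\hat\theta$, i.e.\ products $\hat\theta_{j_1}\cdots\hat\theta_{j_a}\hat\theta_{j_{a+1}}^*\cdots\hat\theta_{j_k}^*$ with $\sum_{\le a} j_i = \sum_{> a} j_i$; the power spectrum $|\hat\theta_j|^2$ comes from $k=2$ and the bispectrum from $k=3$. With $\supp(\hat\theta)\subset[s]$ one checks that all phase-invariant information needed to pin down $\hat\theta$ up to a global phase $z\mapsto (z^j\hat\theta_j)_j$ is carried by monomials of degree at most $2s-1$ (one needs the power spectrum to fix magnitudes and a chain of low-degree bispectrum-type relations to fix relative phases across the $\le s$ active frequencies, and the worst chain has length forcing degree $2s-1$), while degree $2s$ or higher adds nothing new — this is precisely the asymmetry between $2s-1$ and $s+1$ that also drives Theorem~\ref{thm:main_lower}. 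I would isolate this as a lemma, prove it by an explicit induction on the frequencies in $S$ (fix $|\hat\theta_j|$ from the power spectrum; fix the phase of the lowest active frequency by the residual $\cG$-freedom; propagate phases using triple products $\hat\theta_{j}\hat\theta_{j'}\hat\theta_{j+j'}^*$ when $j+j'\in S$, falling back to higher-degree invariants when the support is sparse), and verify the degree never exceeds $2s-1$. Assembling: $D(\theta\parallel\phi) \gtrsim \sigma^{-(4s-2)}\min_{k\le 2s-1}\|\overline{\theta^{\otimes k}}-\overline{\phi^{\otimes k}}\|^2 \gtrsim \sigma^{-(4s-2)}\rho^2(\theta,\phi)$, which is~\eqref{EQ:shift_universal_curvature}.
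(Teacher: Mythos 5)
Your overall architecture matches the paper's: reduce $D(\theta\parallel\phi)$ to the invariant moment tensors via Theorem~\ref{thm:tight_KL_bound} (after splitting off the first moment with Lemma~\ref{lem:first_moment}), and then show that some $\Delta_m$ with $m\le 2s-1$ controls $\rho(\theta,\phi)$. Your identifiability-plus-compactness step is also genuinely present in the paper, but only for the \emph{far} regime: in Case~3 of the appendix proof, the authors take a finite family $\cP$ of invariant polynomials of degree at most $2s-1$, invoke \cite{KakIve93} for orbit separation, and use compactness of $\{\varphi:\rho(\vartheta,\varphi)\ge\ep_0,\ \|\varphi\|\le 3\|\vartheta\|\}$ to get a uniform lower bound $\delta>0$ on the polynomial discrepancy, exactly as you suggest.

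The gap is that compactness cannot give you the inequality uniformly as $\rho(\theta,\phi)\to 0$, and that is where the entire difficulty of the proposition lives. Your contradiction argument asserts that failure of the constant would produce sequences with $\rho(\theta_m,\phi_m)$ \emph{bounded away from} $0$ but moment discrepancies tending to $0$; that is not what failure gives you. If no constant $c$ works, the offending sequences satisfy $\max_{k\le 2s-1}\|\Delta_k(\theta_m,\phi_m)\|^2/\rho^2(\theta_m,\phi_m)\to 0$, and (since the far regime is fine by your own compactness step) they must have $\rho(\theta_m,\phi_m)\to 0$, where the identifiability statement gives no contradiction: both sides vanish in the limit. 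What is needed in the local regime is a quantitative nondegeneracy of the moment map near the orbit of $\theta$, and this is precisely what the paper's Cases~1(a) and~1(b) supply: writing $\hat\varphi_j/\hat\vartheta_j=r_je^{\mathrm{i}\delta_j}$, either some modulus discrepancy $|1-r_j|\gtrsim\ep$ occurs and $\|\Delta_2\|\gtrsim\ep$ via the power spectrum, or all moduli nearly agree and one extracts a phase discrepancy $|1-e^{\mathrm{i}(p\delta_\ell-\ell\delta_p)/p}|\gtrsim\ep$, which is then detected by the single entry $j_1=\dots=j_\ell=-p$, $j_{\ell+1}=\dots=j_m=\ell$ of $\Delta_m$ with $m=\ell+p\le 2s-1$. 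This local analysis cannot be replaced by soft arguments and is the core of the proof; your proposal delegates it to an ``identifiability lemma'' that only addresses exact moment matching. A secondary issue: your assembled chain uses $\min_{1\le k\le 2s-1}\|\Delta_k\|^2\gtrsim\rho^2$, which is false as stated (e.g.\ $\Delta_1=0$ for the centered signals); you need ``there exists $k\le 2s-1$ with $\|\Delta_k\|\gtrsim\rho$,'' which, combined with $\sigma\ge 1$, yields the $\sigma^{-(4s-2)}$ rate.
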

\begin{proof}
For the sake of exposition, we only prove~\eqref{EQ:shift_universal_curvature} for $\phi$ such that $\metric(\theta, \phi) \leq \ep_0$ for some small $\ep_0$ to be specified. The complete proof is deferred to Appendix~\ref{supp}.
In what follows, the symbols $c$ and $C$ will refer to unspecified positive constants whose value may change from line to line. By rescaling by a constant, we can assume $\|\theta\| = 1$ and $\sigma \geq 1$.

By Lemma~\ref{lem:first_moment},
\begin{equation}\label{eqn:k1_boundmt}
D(\phi) = \frac{1}{2\sigma^2}\| \E[G \theta - G \phi]\|^2 + D(\vartheta \parallel \varphi)\,,
\end{equation}
where $\vartheta = \theta - \E G\theta$ and $\varphi = \phi - \E G \phi$.

If $|\hat \theta_0 - \hat \phi_0| \geq \frac 1 2 \metric(\theta, \phi)$, then~\eqref{eqn:k1_boundmt} implies
\begin{equation*}
D(\phi) \geq \frac{1}{2\sigma^2}\| \E[G \theta - G \phi]\|^2 = \frac{1}{2\sigma^2}(\hat \theta_0 - \hat \phi_0)^2 \ge \frac{\metric^2(\theta, \phi)}{8\sigma^2} \geq \frac 1 8 \sigma^{-4s+2} \metric^2(\theta, \phi)\,.
\end{equation*}
On the other hand, if $|\hat \theta_0 - \hat \phi_0| < \frac 1 2 \metric(\theta, \phi)$, then
$$
\metric(\vartheta, \varphi)^2 = \metric(\theta_S, \phi)^2  - |\hat \theta_0 - \hat \phi_0|^2 \ge 3\metric^2(\theta, \phi)/4\,.
$$
Thus, by~\eqref{eqn:k1_boundmt}, it suffices to show that
\begin{equation*}
D(\vartheta \parallel \varphi) \ge C \sigma^{-4s+2} \metric(\vartheta, \varphi)^2\,,
\end{equation*}
for vectors $\vartheta$ and $\varphi$ satisfying $\E G \vartheta  = \E G \varphi = 0$.
In what follows, write $\metric(\vartheta, \varphi) = \ep$.
Since $D(\varphi)=D(G\varphi)$ for all $G \in \cS$, we may assume that $\|\vartheta - \varphi\| = \ep$.
We will show that there exists a small positive constant $c$ such that for some $m \leq 2 s - 1$,
\begin{equation*}
\|\Delta_m\| := \|\E[(G\vartheta)^{\otimes m} - (G\varphi)^{\otimes m}]\| \geq c \ep\,,
\end{equation*}
and the claim will follow from Theorem~\ref{thm:tight_KL_bound}.
We denote by $\kappa$ a small constant whose value will be specified.
There are two cases: either $\vartheta$ and $\varphi$ have essentially the same power spectrum (i.e., $|\hat \vartheta_k| \approx |\hat \varphi_k|$ for all $k$) or their power spectra are very different.
We will treat these two cases separately.

Recall that for each $j \in S$, by Assumptions~\ref{assumption:norm} and~\ref{assumption}, the bounds $c_0^{-1}\le| \hat \vartheta_j| \le c$ hold. Consider the polar form $\hat \varphi_j/\hat \vartheta_j = r_j e^{\mathrm{i} \delta_j}$,
where $r_j\ge 0$.

\paragraph{Case a: There exists $j \in S$ such that $|1 - r_j| \ge \kappa \eps$}
The fact that $|\hat \vartheta_j| \geq c_0^{-1}$ implies
\begin{align*}
\|\Delta_2\|^2 & = \|\E[(G \vartheta)^{\otimes 2} - (G \varphi)^{\otimes 2}]\|^2 \\
& = \sum_{k = -\lfloor L/2 \rfloor}^{\lfloor L/2 \rfloor} (|\hat \vartheta_k|^2 - |\hat \varphi_k|^2)^2 \\
& \geq (|\hat \vartheta_j|^2 - |\varphi_j|^2)^2 \\
& = |\hat \vartheta_j|^4 (1 - r_j^2)^2 \\
& \geq c_0^{-4} (1+r_j)^2(1-r_j)^2 \\
& \geq c_0^{-4} \kappa^2 \eps^2\,,
\end{align*}
so that $\|\Delta_2\| \geq c \eps$.

\paragraph{Case b: $|1 - r_j| < \kappa \ep$ for all $j \in S$}
Denote by $p$ the smallest integer in $S$ and observe that 
\begin{equation*}
\eps^2= \metric(\vartheta, \varphi)^2 = \min_{z: |z| = 1} 2 \sum_{j \in S} |1-r_jz^je^{\mathrm{i}\delta_j}|^2|\hat \vartheta_j|^2 \le C \sum_{j \in S} |1 - r_j e^{\mathrm{i}(p\delta_j-j \delta_p)/p}|^2\,,
\end{equation*}
where the inequality follows from choosing $z = e^{-\mathrm{i} \delta_p/p}$.
Therefore, there exists a coordinate $\ell \in S$ such that
\begin{equation}\label{EQ:deltaphases}
|1 - e^{\mathrm{i} (p \delta_\ell - \ell \delta_p)/p}| \geq |1 - r_\ell e^{\mathrm{i}(p \delta_\ell - \ell \delta_p)/p}| - \kappa \ep \geq c \ep\,,
\end{equation}
as long as $\kappa $ is chosen sufficiently small. In particular, $|1 - e^{\mathrm{i} (p \delta_\ell - \ell \delta_p)/p}| > 0$, so $\ell \neq p$.
Note that this fact implies that, if $|1 - r_j| < \kappa \ep$ for all $j \in S$, then $|S| \geq 2$.

Choose $m = \ell + p$.
Since $\ell, p \in S \subseteq [s]$ and $\ell \neq p$, the bound $m \leq 2s - 1$ holds.
As in the proof of Proposition~\ref{prop:generalized_matching}, we have that
\begin{align*}
\|\Delta_m\|^2 & = \sum_{j_1 + \dots + j_{m} = 0} \left|\prod_{n = 1}^{m} \hat \vartheta_{j_n}- \prod_{n = 1}^{m} \hat \varphi_{j_n}\right|^2 \\
& = \sum_{j_1 + \dots + j_{m} = 0} \left|1 - \prod_{n = 1}^{m} r_{j_n} e^{\mathrm{i} \delta_{j_n}}\right|^2 \prod_{n = 1}^{m} |\hat \vartheta_{j_n}|^2\,.
\end{align*}
Each term in the above sum is positive.
One valid solution to the equation $j_1 + \dots + j_m = 0$ is $j_1 = \dots = j_\ell = -p$ and $j_{\ell + 1} = \dots = j_m = \ell$.
We obtain
\begin{align*}
\|\Delta_m\|^2 &\ge C \left|1 - e^{i (p\delta_\ell - \ell \delta_p)} \prod_{n = 1}^{m} r_{j_n} \right|^2 \\
& \geq C |1 - e^{i (p\delta_\ell - \ell \delta_p)}|^2 - C\left|1 - \prod_{n = 1}^{m} r_{j_n}\right|^2\,.
\end{align*}
As long as $\kappa \ep$ is small enough, $\left|1 - \prod_{n = 1}^{m} r_{j_n}\right| \leq 2 m \kappa \ep$. Moreover, as long as $\ep_0 $ is chosen sufficiently small, $\delta_\ell$ and $\delta_p$ can both be chosen small enough that $|p \delta_\ell - \ell \delta_p| \leq 1$, in which case it holds
$$
|1 - e^{i (p\delta_\ell - \ell \delta_p)}|^2 \ge |1 - e^{i (p \delta_\ell - \ell \delta_p)/p}|^2 \ge c^2\eps^2\,,
$$
where the last inequality follows from~\eqref{EQ:deltaphases}.
So for $\ep_0$ and $\kappa$ chosen sufficiently small, this proves the existence of an $m \leq 2s -1$ for which $\|\Delta_m\| \geq c \ep$.

\end{proof}

\subsection{Proof of Theorem~\ref{thm:main_upper}}\label{sec:upper_proof}
Define $\cR=\{\tilde S=\supp(\hat \theta)\}$ and observe that
$$
\E[\metric(\check \theta_{n}, \theta)] = \E[\metric(\check \theta_{n}, \theta)\1_{\cR}] + \E[\metric(\check \theta_{n}, \theta)\1_{\cR^c}]\,.
$$
The first term is controlled by combining Proposition~\ref{prop:lambda_P_bound} and Theorem~\ref{thm:non_asymptotic} to get
$$
\E[\metric(\check \theta_{n}, \theta)\1_{\cR}]\le C\frac{\sigma^{2s-1}}{\sqrt{n}} + C_\sigma\frac{\log n}{n}\,,
$$
where $C_\sigma \le C\sigma^{12s-11}$.

To bound the second term, we use the Cauchy-Schwarz inequality and Proposition~\ref{prop:projection_construction} to get
$$
\E[\metric(\check \theta_{n}, \theta)\1_{\cR^c}]\le 2L \exp(-c n \sigma^{-4}) \sqrt{\E[\metric(\check \theta_{n}, \theta)^2]} \leq C \frac{\sigma^4}{n} \sqrt{\E[\metric(\check \theta_{n}, \theta)^2]}\,.
$$
We now show that $\E[\metric(\check \theta_{n}, \theta)^2]$ is bounded uniformly over all choices of $\tilde S$ by a constant multiple of $\sigma^2$ using a similar slicing argument as the one employed in the proof of Lemma~\ref{eventA} in Appendix~\ref{supp}.

By the triangle inequality,
\begin{equation*}
\metric(\check \theta_{n}, \theta) \leq \metric(\check \theta_{n}, \theta_{\tilde S}) + \metric(\theta_{\tilde S}, \theta) \leq \metric(\check \theta_{n}, \theta_{\tilde S}) + 1\,.
\end{equation*}
By Lemma~\ref{lem:far_quadratic} in Appendix~\ref{supp}, when $\metric(\check \theta_{n}, \theta_{\tilde S}) \geq 3 \sqrt 2 \sigma$, the divergence satisfies $D(\theta_{\tilde S} \parallel \check \theta_{n}) \geq c \sigma^{-2} \metric(\check \theta_{n}, \theta_{\tilde S})$.
We therefore have
$$
\metric(\check \theta_{n}, \theta_{\tilde S})^2\le 18 \sigma^2 + c^{-1} \sigma^2 D(\theta_{\tilde S} \parallel \check \theta_{n}) \le (C^\circ \sigma)^2(1+ \mathfrak{G}_n(\theta_{\tilde S} \parallel \check \theta_{n}))\,,
$$
for some constant $C^\circ$, where
\begin{equation*}
\mathfrak{G}_n(\theta_{\tilde S} \parallel \check \theta_{n}) = D(\theta_{\tilde S} \parallel \check \theta_{n}) - D_n(\theta_{\tilde S} \parallel \check \theta_{n})
\end{equation*}
and where we have used the fact that $D_n(\theta_{\tilde S} \parallel \check \theta_{n}) \leq D_n(\theta_{\tilde S} \parallel \theta_{\tilde S}) = 0$.

For $j \ge 0$, define $T_j=\{\phi \in \R^d\,:\, 2^j\sigma \le \rho(\phi, \theta_{\tilde S})\le 2^{j+1}\sigma\}$ and  let $J$ be such that $C^\circ \le 2^J \le 2C^\circ$. Observe that
\begin{align*}
\E[\metric(\check \theta_{n}, \theta_{\tilde S})^2]&\le 4 (C^\circ \sigma)^2 + \sum_{j\ge J} \E[\metric(\check \theta_{n}, \theta_{\tilde S})^2\1(\check \theta_{n} \in T_j)]\\
&\le  4 (C^\circ \sigma)^2 + \sigma^2\sum_{j>J} 2^{2j+2}\p[\sup_{\phi \in T_j}\mathfrak{G}_n(\theta_{\tilde S} \parallel \phi)>2^{2j}]\\
&\le 4 (C^\circ \sigma)^2 + C \sigma^2 \sum_{j\ge 0} 2^{2j}\exp(-C2^{2j})\le C \sigma^2\,,
\end{align*}
where we used~\eqref{eqn:divergence_tail_bound} from Appendix~\ref{supp} in the third inequality. 
We obtain
\begin{equation*}
\E[\metric(\check \theta_{n}, \theta)^2] \leq 2 \E[\metric(\check \theta_{n}, \theta_{\tilde S})^2] + 2 \leq C\sigma^2\,.
\end{equation*}

We have established that
$$
\E[\metric(\check \theta_{n}, \theta)] \le C\Big(\frac{\sigma^{2s-1}}{\sqrt{n}}+\sigma^{12s - 11}\frac{\log n}{n}+\frac{\sigma^5}{n} \Big)\,,
$$
which, since $s \geq 2$, completes the proof of Theorem~\ref{thm:main_upper}.

\section{Minimax lower bounds}\label{sec:multi}
Our minimax lower bounds rely ultimately on Le Cam's classical two-point testing method~\cite{LeC73}. For this reason, our lower bounds do not capture the optimal dependence in $L$ but only in $\sigma$ and $n$.  In particular, the version that we use requires an upper bound on the KL divergence, which can be obtained using Theorem~\ref{thm:tight_KL_bound} and a moment matching argument.

\subsection{Moment matching}
Theorem~\ref{thm:tight_KL_bound} implies that $\dist{P}_\theta$ and $\dist{P}_\phi$ are hard to distinguish when the quantities
\begin{equation*}
\|\Delta_m\| = \|\E[(G\theta)^{\otimes m}] - \E[(G\phi)^{\otimes m}]\|
\end{equation*}
vanish for $m \in [k]$. In this section, we show that, in the phase shift model, $\Delta_m, m \in [k]$ can be made to vanish for large $k$ by appropriately choosing the support of the Fourier transforms $\hat \theta$ and $\hat \phi$.

Before stating our main results, we first give a brief sketch of the technique. As we show in the proof of Proposition~\ref{prop:generalized_matching} below, the tensor $\E[(G \theta)^{\otimes m}]$ has a simple form in the Fourier basis:
\begin{equation*}
\E[(\widehat{G \theta})^{\otimes m}]_{j_1 \dots j_m} = \left\{\begin{array}{ll}
\hat \theta_{j_1} \cdots \hat \theta_{j_m} & \text{ if $j_1 + \dots + j_m = 0$,} \\
0 & \text{ otherwise.}
\end{array}\right.
\end{equation*}

For example, the first moment tensor $\E[G \theta]$ contains only the term $\hat \theta_0$, and the second moment tensor $\E[(G \theta)^{\otimes 2}]$ contains the term $\hat \theta_j \hat \theta_{-j}$ for each index~$j$. Since $\theta$ has real entries, $\hat \theta_j \hat \theta_{-j} = |\hat \theta_j|^2$, so that $\E[(G \theta)^{\otimes 2}]$ contains enough information to reconstruct the magnitudes of the Fourier coefficients, but not their phases.
This implies that if $\theta$ and $\phi$ satisfy $|\hat \theta_j| = |\hat \phi_j|$ for all $j$, then $\|\E[(G\theta)^{\otimes 2}] - \E[(G\phi)^{\otimes 2}]\| = 0$.

To exhibit two signals whose higher moments also match, we employ the following idea: if a tuple $(j_1, \dots, j_m)$ is of the form $(j_1, -j_1, j_2, -j_2, \dots)$, then $\E[(\widehat{G \theta})^{\otimes m}]_{j_1 \dots j_m} = |\hat \theta_{j_1}|^2  \cdots |\hat \theta_{j_m}|^2$. In other words, this entry of the $m$th moment tensor also only depends on the magnitudes of the Fourier coefficients of $\theta$ and not on their phases. Therefore, if the only nonzero entries of $\E[(\widehat{G \theta})^{\otimes m}]$ and $\E[(\widehat{G \phi})^{\otimes m}]$ correspond to tuples of this form and if the magnitudes of the Fourier coefficients of $\theta$ and $\phi$ agree, then $\|\E[(G\theta)^{\otimes m}] - \E[(G\phi)^{\otimes m}]\| = 0$.

This argument is formalized in Proposition~\ref{prop:generalized_matching}.

\begin{proposition}\label{prop:generalized_matching}
Fix $2 \leq s \leq \lfloor L/2 \rfloor$ and let $\theta, \phi \in \R^L$ satisfy
\begin{equation*}
\hat \theta_j = \hat \phi_j = 0 \quad \text{for $j \notin \{\pm(s-1), \pm s \}$}
\end{equation*}
and
\begin{equation*}
|\hat \theta_{j}| = |\hat \phi_{j}| \quad \text{for $j \in \{\pm(s-1), \pm s\}$.}
\end{equation*}
If $G$ is drawn uniformly from $\cS$, then for any $m =1, \dots, 2s-2$,
\begin{equation*}
\E[(G \theta)^{\otimes m}] = \E[(G \phi)^{\otimes m}] 
\end{equation*}
\end{proposition}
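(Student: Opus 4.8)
The plan is to compute each moment tensor $\E[(G\theta)^{\otimes m}]$ explicitly in the Fourier basis and then argue that, under the stated hypotheses, every nonzero entry depends only on the magnitudes $|\hat\theta_j|$, which are assumed to match those of $\phi$. First I would establish the formula advertised in the sketch: since $G_z$ acts diagonally in Fourier coordinates by $\widehat{G_z\theta}_j = z^j\hat\theta_j$ with $z$ uniform on the unit circle, taking the tensor product and then the expectation over $z$ gives
\begin{equation*}
\E[(\widehat{G\theta})^{\otimes m}]_{j_1\dots j_m} = \hat\theta_{j_1}\cdots\hat\theta_{j_m}\,\E[z^{j_1+\dots+j_m}] = \hat\theta_{j_1}\cdots\hat\theta_{j_m}\,\1(j_1+\dots+j_m=0)\,,
\end{equation*}
because $\E[z^k]$ is $1$ if $k=0$ and $0$ otherwise. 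Since the Fourier transform is a (complex) orthogonal change of basis on the appropriate space, $\|\E[(G\theta)^{\otimes m}] - \E[(G\phi)^{\otimes m}]\| = 0$ is equivalent to equality of these Fourier-domain tensors, so it suffices to match them entrywise.

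Next I would restrict attention to indices $j_1,\dots,j_m$ each lying in $\{\pm(s-1),\pm s\}$, since all other entries vanish for both $\theta$ and $\phi$. The key combinatorial step is: for any tuple with entries in $\{\pm(s-1),\pm s\}$ and $m\le 2s-2$, the constraint $j_1+\dots+j_m=0$ forces the multiset to pair up as $\{a_1,-a_1,\dots,a_r,-a_r\}$ with each $a_i\in\{s-1,s\}$. I would prove this by a counting argument: write $m = p + q$ where $p$ indices have absolute value $s$ and $q$ have absolute value $s-1$; the sum being zero modulo $\gcd$-type considerations, together with $m\le 2s-2 < 2s-1$, rules out any "wraparound" cancellation and forces the positive and negative occurrences of each magnitude to be equal in number and, in fact, for the product to be a product of terms $\hat\theta_{a}\hat\theta_{-a} = |\hat\theta_a|^2$. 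Concretely, if $p_+$ of the $\pm s$ entries equal $+s$ and $p_-$ equal $-s$ (so $p_+ + p_- = p$), and similarly $q_+, q_-$ for $\pm(s-1)$, then $j_1+\dots+j_m = s(p_+-p_-) + (s-1)(q_+-q_-)$; bounding $|p_+-p_-|\le p$ and $|q_+-q_-|\le q$ and using $p+q\le 2s-2$, one checks this can only be zero when $p_+=p_-$ and $q_+=q_-$.

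Given that, the corresponding entry equals $\prod_{i}|\hat\theta_{a_i}|^2$ (a product over the pairs), which by hypothesis equals $\prod_i |\hat\phi_{a_i}|^2$, the analogous entry for $\phi$. Hence the two Fourier-domain moment tensors agree entrywise for all $m\le 2s-2$, and the claim follows. The main obstacle is the combinatorial lemma in the previous paragraph — making precise why $m\le 2s-2$ is exactly the threshold that forbids any cancellation other than the trivial pairwise one; the arithmetic is elementary but needs to be organized carefully, in particular checking the boundary case $m = 2s-2$ where one might fear a configuration like $(s-1)$ copies of $+s$ hmm no — more precisely one must rule out mixed cancellations such as two copies of $+(s-1)$ balancing against contributions from the $\pm s$ block. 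This is where the stated proof says "as in the proof of Proposition~\ref{prop:generalized_matching}" is invoked elsewhere, so I would isolate it as the crux.
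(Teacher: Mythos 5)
Your proposal is correct and follows essentially the same route as the paper's proof: the same Fourier-domain formula for the moment tensors, the same restriction to indices in $\{\pm(s-1),\pm s\}$, and the same combinatorial core (counting signed occurrences of $s-1$ and $s$, using that they are coprime so the minimal nontrivial solution of $a s+b(s-1)=0$ has $|a|+|b|=2s-1>m$). The only cosmetic difference is that you match the tensors entrywise in the Fourier basis while the paper contracts against $u^{\otimes m}$; both are valid, and your "crux" lemma is exactly the paper's $\alpha-\beta=\gamma-\delta=0$ step.
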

\begin{proof}
Fix $m \leq 2s-2$.
Since $\E[(G \theta)^{\otimes m}]$ and $\E[(G \phi)^{\otimes m}]$ are symmetric tensors, to show that they are equal it suffices to show that
\begin{equation*}
\langle \E[(G \theta)^{\otimes m}], u^{\otimes m}\rangle = \langle \E[(G \phi)^{\otimes m}], u^{\otimes m}\rangle \qquad \forall\, u \in \R^L
\end{equation*}
or equivalently, that
\begin{equation}
\label{EQ:equaltensors}
\E[(u^\top G \theta)^m] = \E[(u^\top G \phi)^m]\qquad \forall\, u \in \R^L\,.
\end{equation}

Consider the set $\cP = \{\zeta \,:\, |\hat \zeta_j| = |\hat \theta_j| \,, \ \forall\, j\}$ and note that $\theta, \phi \in \cP$.
We  show that the function $\zeta \mapsto \E[(u^\top G \zeta)^m]$ is constant on $\cP$, which readily yields~\eqref{EQ:equaltensors}.
For a fixed shift $G_z \in \cF$, we obtain
\begin{equation*}
u^\top G_z \zeta = \langle \hat u, \widehat{G_z \zeta}\rangle = \sum_{j = -\lfloor L/2 \rfloor}^{\lfloor L/2 \rfloor} \hat u_{-j} \hat \zeta_j z^{j}\,,
\end{equation*}
so
\begin{equation*}
(u^\top G_z \zeta)^m = \sum_{j_1, \dots, j_m = -\lfloor L/2 \rfloor}^{\lfloor L/2 \rfloor} z^{j_1 + \dots + j_m} \prod_{n = 1}^m \hat u_{-j_n} \hat \zeta_{j_n}\,.
\end{equation*}

Taking expectations with respect to a uniform choice of $z$ yields
\begin{equation}
\label{EQ:sumofji}
\E[(u^\top G_z \zeta)^m]  = \sum_{j_1 + \dots + j_m = 0} \prod_{n = 1}^m \hat u_{-j_n} \hat \zeta_{j_n}\,,
\end{equation}
where the sums are over all choices of coordinates $j_1, \dots, j_m \in \{-\lfloor L/2 \rfloor, \dots, \lfloor L/2 \rfloor\}$ whose sum is $0$.

The Fourier transform of $\zeta$ is supported only on coordinates $\pm (s-1)$ and $\pm s$, so we may restrict our attention to sums involving only those coordinates.
Suppose $j_1 + \dots + j_m = 0$.
Define
\begin{align*}
\alpha & = |\{i : j_i = s-1\}| &
\beta & = |\{i : j_i = -(s-1)\}| \\
\gamma & = |\{i : j_i = s\}| &
\delta & = |\{i : j_i = -s\}| 
\end{align*}

By assumption $j_1 + \dots + j_m = 0$, so the tuple $(\alpha, \beta, \gamma, \delta)$ is a solution to
$$
\alpha (s-1) + \beta (-(s-1)) + \gamma (s) + \delta (-s)  = 0
$$
or, equivalently,
\begin{align*}
(\alpha-\beta) (s-1) + (\gamma-\delta) s & = 0 \nonumber
\end{align*}

Since $s-1$ and $s$ are coprime, $(\alpha - \beta)$ and $(\gamma - \delta)$ must be multiples of $s$ and $s-1$, respectively.
Since $|\alpha - \beta| + |\gamma - \delta| \leq m < 2s - 1$, in fact $\alpha - \beta = \gamma - \delta = 0$.

Therefore the only $m$-tuples $(j_1, \ldots, j_m)$ that appear in the sum on the right-hand side of~\eqref{EQ:sumofji} are those in which $+(s-1)$ and $-(s-1)$ occur an equal number of times and $+s$ and $-s$ occur an equal number of times.
For such $m$-tuples, the product $\prod_{n = 1}^m \hat u_{-j_n} \hat \zeta_{j_n}$ can be reduced to a product of terms of the form $\hat u_{-(s-1)}\hat u_{s-1}\hat \zeta_{s-1} \hat \zeta_{-(s-1)}$ and $\hat u_{-s}\hat u_{s}\hat \zeta_s \hat \zeta_{-s}$.
Since $u$ and $\zeta$ are real vectors, $\hat u_j \hat u_{-j} = |u_j|^2$ and $\hat \zeta_j \hat \zeta_{-j} = |\hat \zeta_{j}|^2$ for all $j =  -\lfloor L/2 \rfloor, \dots, \lfloor L/2 \rfloor$, so
\begin{equation*}
\prod_{n = 1}^m \hat u_{-j_n} \hat \zeta_{j_n} = (|\hat u_{s-1}|^2 |\hat \zeta_{s-1}|^2)^{\alpha+\beta} (|\hat u_s|^2 |\hat \zeta_s|^2)^{\gamma+\delta}\,.
\end{equation*}
This quantity depends only on the moduli $|\hat \zeta_s|$ and $|\hat \zeta_{s-1}|$, hence it is the same for all $\zeta \in \cP$. This completes the proof of~\eqref{EQ:equaltensors} and therefore the proof of the proposition.
\end{proof}

\subsection{Proof of Theorem~\ref{thm:main_lower}}
Fix $n \geq 1$.
We will select $\phi, \theta_n \in \cT_s$ such that $\rho(\phi, \theta_n) \geq c \big(\frac{\sigma^{(2s-1) \vee (s+1)}}{\sqrt n} \wedge 1\big)$ for some small universal constant $c > 0$ but $D(\dist{P}_{\theta_n}^n \parallel \dist{P}_{\phi}^n) \leq \frac 12$. The bound will then follow from standard techniques.

If $s = 0$, then let $\phi = 0$ and $\theta_n = \frac{\sigma}{\sqrt{nL
}} \bone$, where $\bone$ denotes the all-ones vector of $\R^L$. Note that $\metric(\theta_n, \phi) = \sigma/\sqrt n$.
Moving to the Fourier domain, we have $(\widehat{\theta_n})_0 = \sigma/\sqrt{n}$ and $(\widehat{\theta_n})_j = 0$ for $j \neq 0$, so that $\theta_n, \phi \in \cT_0$.
By Lemma~\ref{lem:first_moment},
\begin{equation*}
D(\dist{P}_{\theta_n}^n \parallel \dist{P}_{\phi}^n) = \frac{n\sigma^2}{2 n\sigma^2} = \frac{1}{2}\,.
\end{equation*}

If $s = 1$, let $\phi$ be given by
\begin{equation*}
\hat \phi_j = 
\left\{ \begin{array}{ll}
1/\sqrt{2} & \text{if $j  \in \{\pm 1\}$,} \\
0 & \text{otherwise.}\end{array}\right.
\end{equation*}
and let $\theta_n$ satisfy 
\begin{equation*}
(\widehat{\theta_n})_j =
\left\{ \begin{array}{ll}
1/\sqrt{2} +  c_1 \frac{\sigma^2}{\sqrt{2n}} & \text{if $j \in \{\pm 1\}$,} \\
0 & \text{otherwise,}\end{array}\right.
\end{equation*}
for some constant $c_1 > 0$ to be specified.
Clearly $\theta_n, \phi \in \cT_1$, and $\metric(\theta_n, \phi) = c_1 \frac{\sigma^2}{\sqrt n}$.

Theorem~\ref{thm:tight_KL_bound} implies that
\begin{equation*}
D(\dist{P}_{\theta_n}^n \parallel \dist{P}_{\phi}^n) \leq \overline C \frac{n\metric(\theta_n, \phi)^2}{\sigma^4} \leq \frac{1}{2}\,,
\end{equation*}
by choosing $c_1$ small enough.

Finally, suppose $s \geq 2$.
Fix $z = e^{\mathrm{i} \delta}$ for $\delta = c_1 (\sigma^{2s-1}/\sqrt{n} \wedge 1)$ for some positive constant $c_1 \leq 1$ to be specified.
Let $\phi$ be given by
\begin{equation*}
\hat \phi_j  =
\left\{ \begin{array}{ll}
1/2 & \text{if $j  \in \{\pm(s-1), \pm s \}$,} \\
0 & \text{otherwise.}
\end{array}\right.
\end{equation*}
Let $\theta_n$ be given by
\begin{align*}
(\widehat{\theta_n})_j  =
\left\{ \begin{array}{ll}
1/2 & \text{if $j \in \{\pm(s-1)\}$,} \\
z/2 & \text{ if $j = s$,} \\
z^*/2 & \text{ if $j = -s$,} \\
0 & \text{otherwise.}
\end{array}\right.\end{align*}
Note that $\theta_n$ and $\phi$ both lie in $\cT_s$.

For any unit complex number $w \in U(1)$, we have
\begin{equation*}
\|\theta_n - G_w \phi\|^2 = \frac 12 (\|1 - w\|^2 + \|z - w\|^2) \ge \frac14 \|1 - z\|^2\,.
\end{equation*}
So
\begin{equation*}
\frac 1 4 \|1-z\|^2 \leq \rho^2(\theta_n, \phi) \leq \|\theta_n - \phi\|^2 = \frac 12 \|1 - z\|^2\,,
\end{equation*}
and under the assumption that $\delta \leq 1$, we have $\frac 12 \delta^2 \leq \|1 - z\|^2 \leq \delta^2$.

Therefore
\begin{equation*}
c \delta \leq \rho(\theta_n, \phi) \leq C \delta
\end{equation*}
for absolute positive constants $c$ and $C$.

Theorem~\ref{thm:tight_KL_bound} and Proposition~\ref{prop:generalized_matching} imply that\begin{equation*}
D(\dist{P}_{\theta_n}^n \parallel \dist{P}_{\phi}^n) \le \overline{C} n \sigma^{-4s+2}\metric^2(\phi_n, \tau)\le \frac12\,,
\end{equation*}
by taking $c_1$ small enough.

In all three cases, we have a bound $D(\dist{P}_{\theta_n}^n \parallel \dist{P}_{\phi}^n) \le 1/2$
for $\theta_n, \phi \in \cT_s$ satisfying $\metric(\theta_n, \phi) \geq c (\sigma^{(2s -1) \wedge (s+1)}/\sqrt n \wedge 1)$ for some constant $c > 0$.
Using standard minimax lower bound techniques~\cite{Tsy09}, we get the desired result.

\appendix

\section{Information geometry for algebraically structured models}\label{sec:tight_bounds}
Our proof techniques rely on understanding the curvature of the Kullback-Leibler divergence around its minimum, which is known to control the information geometry of the problem.
In this section, we obtain precise bounds on the divergence $D(\dist{P}_\theta \parallel \dist{P}_\phi)$ for pairs of signals $\theta$ and $\phi$ for any choice of a subgroup $\cG$ of the orthogonal group $\mathrm{O}(L)$.

We extend the approach of~\cite{CaiLow11} to bound the divergence between $D(\dist{P}_\theta \parallel \dist{P}_\phi)$ in terms of the Hilbert-Schmidt distance between the moment tensors $\E[(G \theta)^{\otimes m}]$ and $\E[(G \phi)^{\otimes m}]$. Recall that $G$ is uniformly distributed over $\cG$.
In what follows, we write
\begin{equation*}
\Delta_m := \E[(G\theta)^{\otimes m}] - \E[(G \phi)^{\otimes m}]\,.
\end{equation*}
Our results imply that when $\sigma$ is bounded below by a constant, the divergence can be bounded above and below by an infinite series of the form $\sum_m \frac{c^m\|\Delta_m\|^2}{\sigma^{2m} m!}$.
We note that the assumption that $\sigma$ be bounded below is essential: when $\sigma \to 0$, it is not hard to show that $D(\theta \parallel \phi) = \frac{\metric^2(\phi, \theta)}{2 \sigma^2} + o(\sigma^{-2})$.

For convenience, we write $D(\theta \parallel \phi)$ for $D(\dist{P}_\theta \parallel \dist{P}_\phi)$.
We begin by establishing the effect of the first moments $\E G \theta$ and $\E G \phi$ on $D(\theta \parallel \phi)$.
\begin{lemma}\label{lem:first_moment}
If $\vartheta = \theta - \E G \theta$ and $\varphi = \phi - \E G \phi$, then
\begin{equation*}
D(\theta \parallel \phi) =D(\vartheta \parallel \varphi)+ \frac{1}{2\sigma^2} \|\Delta_1\|^2 \,.
\end{equation*}
\end{lemma}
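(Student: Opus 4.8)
The plan is to reduce everything to the observation that the first moment $m_\theta := \E[G\theta]$ lives in the subspace $V := \{v \in \R^L : Gv = v \text{ for all } G \in \cG\}$ of $\cG$-fixed vectors, and then to split both $\dist{P}_\theta$ and $\dist{P}_\phi$ into a Gaussian component on $V$ and a component on $V^\perp$ which is exactly the re-centered model. First I would record two elementary facts. By invariance of the Haar measure on the compact group $\cG$, $H m_\theta = \E[HG\theta] = \E[G\theta] = m_\theta$ for every $H \in \cG$, so $m_\theta \in V$; likewise $m_\phi \in V$, and hence $\Delta_1 = m_\theta - m_\phi \in V$. Consequently $G\vartheta = G(\theta - m_\theta) = G\theta - m_\theta$, and a one-line computation (for $v \in V$, $\langle m_\theta, v\rangle = \E\langle \theta, G^\top v\rangle = \langle \theta, v\rangle$) shows $\vartheta \perp V$; since $V^\perp$ is $\cG$-invariant, $G\vartheta \in V^\perp$ for every $G$, and the same holds for $\varphi$.

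Next I would decompose the noise orthogonally as $\xi = \xi_V + \xi_{V^\perp}$, where $\xi_V, \xi_{V^\perp}$ are independent Gaussians, jointly independent of $G$. Writing $Y = G\theta + \sigma\xi = m_\theta + G\vartheta + \sigma\xi$ and projecting onto $V$ and $V^\perp$ (using $m_\theta \in V$, $G\vartheta \in V^\perp$), the two components $m_\theta + \sigma\xi_V$ and $G\vartheta + \sigma\xi_{V^\perp}$ are functions of disjoint independent collections of randomness, hence independent. Therefore $\dist{P}_\theta = \cN(m_\theta, \sigma^2 I_V) \otimes Q_\vartheta$, where $Q_\vartheta$ is the law of $G\vartheta + \sigma\xi_{V^\perp}$ on $V^\perp$; the identical argument gives $\dist{P}_\phi = \cN(m_\phi, \sigma^2 I_V)\otimes Q_\varphi$, as well as $\dist{P}_\vartheta = \cN(0,\sigma^2 I_V)\otimes Q_\vartheta$ and $\dist{P}_\varphi = \cN(0,\sigma^2 I_V)\otimes Q_\varphi$ (the $V$-marginal of the centered model is pure noise).

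Finally I would apply additivity of the Kullback--Leibler divergence over product measures to get
\begin{align*}
D(\theta \parallel \phi) &= D\big(\cN(m_\theta, \sigma^2 I_V) \parallel \cN(m_\phi, \sigma^2 I_V)\big) + D(Q_\vartheta \parallel Q_\varphi)\,,\\
D(\vartheta \parallel \varphi) &= D\big(\cN(0, \sigma^2 I_V) \parallel \cN(0, \sigma^2 I_V)\big) + D(Q_\vartheta \parallel Q_\varphi) = D(Q_\vartheta \parallel Q_\varphi)\,.
\end{align*}
Subtracting and invoking the standard formula $D(\cN(\mu_1,\sigma^2 I)\parallel \cN(\mu_2,\sigma^2 I)) = \|\mu_1-\mu_2\|^2/(2\sigma^2)$ yields $D(\theta \parallel \phi) - D(\vartheta \parallel \varphi) = \|m_\theta - m_\phi\|^2/(2\sigma^2) = \|\Delta_1\|^2/(2\sigma^2)$, which is exactly the claim.

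The only substantive step is the first one — that $m_\theta, m_\phi$ (so $\Delta_1$) lie in the fixed subspace while $\vartheta, \varphi$ (so all $G\vartheta, G\varphi$) lie in its orthogonal complement — since this is precisely what makes the two marginals decouple; everything afterwards is routine Gaussian bookkeeping. If one prefers to avoid the subspace language, the same result drops out by a direct density computation: $G\theta = m_\theta + G\vartheta$ gives $f_\theta(y) = f_\vartheta(y - m_\theta)$, so after the change of variables $z = y - m_\theta$ in the divergence integral one needs $\log\frac{f_\varphi(z)}{f_\varphi(z+\Delta_1)} = \langle z,\Delta_1\rangle/\sigma^2 + \|\Delta_1\|^2/(2\sigma^2)$, where the cross term in the exponent vanishes because $\langle \Delta_1, G\varphi\rangle = \langle G^{-1}\Delta_1,\varphi\rangle = \langle \Delta_1,\varphi\rangle = \langle \Delta_1, \E[G\varphi]\rangle = 0$; integrating against $f_\vartheta$, whose mean is $\E[G\vartheta] = 0$, kills the linear term and leaves $\|\Delta_1\|^2/(2\sigma^2)$.
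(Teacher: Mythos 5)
Your proof is correct, and its main argument takes a genuinely different route from the paper's. Both proofs turn on the same algebraic facts --- that $m_\theta = \E[G\theta]$ and $m_\phi = \E[G\phi]$ lie in the subspace $V$ of $\cG$-fixed vectors while $\vartheta,\varphi$ and their orbits lie in $V^\perp$ --- but you convert this into the KL identity by factoring $\dist{P}_\theta = \cN(m_\theta,\sigma^2 I_V)\otimes Q_\vartheta$ and invoking tensorization of the divergence over product measures together with the Gaussian KL formula, whereas the paper works directly with the likelihood ratio: it writes $D(\theta\parallel\phi)$ as an expectation of a log-ratio of conditional moment generating functions, splits the exponent $(\theta+\sigma\xi)^\top G\theta = (\vartheta+\sigma\xi)^\top G\vartheta + (\bar\theta+\sigma\xi)^\top\bar\theta$, pulls the $G$-independent factor out of the inner expectation, and collects the quadratic terms into $\tfrac{1}{2\sigma^2}\|\Delta_1\|^2$. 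Your product-measure decomposition is arguably the more conceptual argument: it makes transparent \emph{why} the first moment contributes an additive Gaussian location term (the $V$-marginal of the model is exactly a Gaussian location family decoupled from the rest), at the cost of a small amount of setup (independence of the two projected components, $\cG$-invariance of $V^\perp$), all of which you verify correctly. Your closing ``direct density computation'' is essentially the paper's proof in a different order, so the two approaches reconcile. One very minor point: the identity $\langle\Delta_1,\varphi\rangle = \langle\Delta_1,\E[G\varphi]\rangle$ in that last paragraph deserves the half-line justification that $\langle\Delta_1,\varphi\rangle = \langle\Delta_1,G\varphi\rangle$ for every $G$ (since $\Delta_1\in V$), so one may average over $G$; this is implicit in your chain of equalities but worth stating.
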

Lemma~\ref{lem:first_moment} implies that it suffices to bound $D(\theta \parallel \phi)$ for vectors $\theta$ and $\phi$ satisfying $\E G \theta = \E G \phi = 0$, which we accomplish in the following theorem.

\begin{theorem}\label{thm:tight_KL_bound}
Let $\theta, \phi$ in $\R^L$ satisfy $3 \metric(\theta, \phi) \leq \|\theta\| \le \sigma$ and $\E G \theta = \E G \phi = 0$.
For any $k \geq 1$, there exist universal constants $\underline C$ and $\overline C$ such that
\begin{equation*}
\underline C \sum_{m=1}^{\infty} \frac{\|\Delta_m\|^2}{(\sqrt 3\sigma)^{2m} m!} \leq D(\theta \parallel \phi) \leq 2 \sum_{m=1}^{k-1} \frac{\|\Delta_m\|^2}{\sigma^{2m} m!} + \overline C\frac{\|\theta\|^{2k-2}\metric(\theta, \phi)^2}{\sigma^{2k}} \,.
\end{equation*}
\end{theorem}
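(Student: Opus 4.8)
The plan is to express the density of $\dist{P}_\theta$ explicitly, factor out the common Gaussian factor, and expand the resulting likelihood ratio as a series in moment tensors. Since $\dist{P}_\theta$ is the $\cG$-average of $\cN(G\theta,\sigma^2 I_L)$ against the Haar measure, its density with respect to $\cN(0,\sigma^2 I_L)$ is $L_\theta(y) := \E_G\big[\exp(\tfrac{1}{\sigma^2}\langle y, G\theta\rangle - \tfrac{1}{2\sigma^2}\|\theta\|^2)\big]$, using that $\|G\theta\|=\|\theta\|$. Hence $D(\theta\parallel\phi) = \E_{Y\sim\dist{P}_\theta}[\log L_\theta(Y) - \log L_\phi(Y)]$. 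Expanding the exponential inside $L_\zeta$, one gets $L_\zeta(y) = e^{-\|\zeta\|^2/(2\sigma^2)}\sum_{m\ge 0}\frac{1}{\sigma^{2m}m!}\langle y^{\otimes m}, \E[(G\zeta)^{\otimes m}]\rangle$. The key structural fact is that $D(\theta\parallel\phi)$ can be written, via the variational/Hellinger-type identity $D(P\parallel Q) \le$ (and $\gtrsim$) a $\chi^2$-type quantity, as a controlled function of the coefficients $\langle \E[(G\theta)^{\otimes m}], \E[(G\phi)^{\otimes m}]\rangle$ integrated against the Gaussian; the Gaussian moments of $y^{\otimes m}$ pair tensors via the identity $\E_{\xi\sim\cN(0,I)}[\langle \xi^{\otimes m}, A\rangle\langle\xi^{\otimes m},B\rangle]$ being comparable to $m!\,\langle A,B\rangle$ for symmetric $A,B$ (up to lower-order contraction terms that are where the constants $\sqrt 3$ enter).

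Concretely, I would proceed as follows. First, reduce to $\E G\theta = \E G\phi = 0$ (given) so that $\Delta_0 = 0$ and the $m=0$ term drops. Second, for the \textbf{upper bound}, use the inequality $D(P\parallel Q)\le \chi^2(P\parallel Q) = \int (dP/dQ)^2\,dQ - 1$ applied with $Q = \dist{P}_\phi$ — or, since $\dist{P}_\phi$ is itself a mixture, first pass through $\cN(0,\sigma^2 I)$ as a reference measure and bound $D(\theta\parallel\phi)$ by $\int (L_\theta - L_\phi)^2/L_\phi \,d\cN(0,\sigma^2 I)$ or a truncated version thereof. Truncating the series at order $k$ produces the main term $\sum_{m=1}^{k-1}\|\Delta_m\|^2/(\sigma^{2m}m!)$ after computing the Gaussian pairing, and the tail $\sum_{m\ge k}$ is bounded using $\|\E[(G\theta)^{\otimes m}] - \E[(G\phi)^{\otimes m}]\| \le \|\theta\|^{m-1}\rho(\theta,\phi)$ (which follows by a telescoping argument $a^{\otimes m}-b^{\otimes m} = \sum_{j} a^{\otimes j}\otimes(a-b)\otimes b^{\otimes(m-1-j)}$ combined with $\|\E[(G\theta)^{\otimes j}\otimes(\cdot)\otimes (G\phi)^{\otimes(m-1-j)}]\|\le \|\theta\|^j\|\cdot\|\|\phi\|^{m-1-j}$ and $\|\phi\|\le\|\theta\|+\rho\le\tfrac43\|\theta\|$), giving a geometric series in $\|\theta\|^2/\sigma^2 \le 1$ summing to $O(\|\theta\|^{2k-2}\rho^2/\sigma^{2k})$. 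Third, for the \textbf{lower bound}, use $D(P\parallel Q)\ge$ a constant times the squared Hellinger distance $H^2(P,Q)$, or more directly $D(P\parallel Q)\ge \sup_{t}\big(2t\,\mathrm{(something)} - \log\E_Q[(dP/dQ)^{2t}]\big)$-type bounds; the cleanest route is probably $D(\theta\parallel\phi)\ge c\,\big(\chi^2\text{-surrogate with the roles arranged so that each }\|\Delta_m\|^2\text{ appears with a positive sign}\big)$. The factor $(\sqrt 3\sigma)^{2m}$ rather than $\sigma^{2m}$ in the lower bound absorbs the cross-contraction terms in the Gaussian moment pairing (the Wick/Isserlis expansion of $\E[\langle\xi^{\otimes m},A\rangle\langle\xi^{\otimes m},B\rangle]$ has the "diagonal" pairing $m!\langle A,B\rangle$ plus other pairings that, for symmetric tensors, are dominated after inserting the $3^m$ slack).

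The main obstacle I expect is controlling the \emph{cross terms} in the Gaussian moment computation so that the lower bound genuinely isolates $\sum_m \|\Delta_m\|^2$ with a positive coefficient: the pairing $\E_\xi[\langle\xi^{\otimes m},A\rangle\langle\xi^{\otimes \ell},B\rangle]$ for $m\ne\ell$ vanishes by parity only when $m+\ell$ is odd, but when $m+\ell$ is even there are genuine nonzero cross contributions, and the full log-likelihood-ratio $\log L_\theta - \log L_\phi$ mixes all orders. The technical heart is therefore to show that after expanding $\log(1 + \sum_{m\ge1}(\cdots)) $ and integrating, the quadratic form in the $\Delta_m$'s is comparable (with the stated constants) to its diagonal $\sum_m\|\Delta_m\|^2/(\sigma^{2m}m!)$; this likely uses the constraint $3\rho(\theta,\phi)\le\|\theta\|\le\sigma$ to make the off-diagonal contributions a small perturbation, together with a Cauchy–Schwarz/Young step to reabsorb them. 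The bound $\|\theta\|\le\sigma$ ensures every ratio $\|\theta\|^2/\sigma^2\le 1$ so all these series converge and the perturbative argument closes.
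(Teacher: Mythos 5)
Your upper bound is essentially the paper's argument: bound $D$ by $\chi^2$, use $\E G\theta=0$ and Jensen to replace the mixture density in the denominator by the centered Gaussian reference, expand the resulting exponential so that the $m$th term is exactly $\|\Delta_m\|^2/(\sigma^{2m}m!)$ (with no Wick cross-pairings, since the Gaussian integral has already been performed and what gets expanded is $e^{(G'\theta)^\top G\theta/\sigma^2}$), truncate at $k$, and control the tail by a tensor-difference bound of the type $\|\Delta_m\|^2\lesssim 2^m\metric^2(\theta,\phi)$ (the paper's Lemma~\ref{lem:tensor_bound}, proved by exactly the kind of telescoping you describe). That half is fine.

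The lower bound is where there is a genuine gap. You correctly identify the obstacle---after expanding $\log L_\theta-\log L_\phi$ and integrating, the quadratic form in the $\Delta_m$'s has off-diagonal contributions, and lower-bounding it by its diagonal is the ``technical heart''---but you do not supply a mechanism for resolving it, and the perturbative reabsorption you gesture at is not how the proof goes. The paper sidesteps the expansion of the logarithm entirely via a moment-matching testing device (Lemma~\ref{lem:testing_bound}): if a statistic $T$ has mean gap $\mu$ between $\dist{P}_\theta$ and $\dist{P}_\phi$ and variance at most $s^2$ under both, then $D(\theta\parallel\phi)\geq \mu^2/(4s^2+\mu^2)$. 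The statistic is the Hermite-tensor polynomial $t(y)=\sum_{m=1}^{k}\langle\Delta_m,H_m(y)\rangle/((\sqrt 3\sigma)^{2m}m!)$; the Hermite property $\E[H_m(Y)]=\mu^{\otimes m}$ for $Y\sim\cN(\mu,\sigma^2 I)$ makes the mean gap \emph{exactly} $\delta=\sum_{m=1}^{k}\|\Delta_m\|^2/((\sqrt 3\sigma)^{2m}m!)$ with no cross terms, and the variance is bounded by $e\delta$ using Gaussian hypercontractivity (Lemma~\ref{lem:hermite_variance_bound}). In particular the $\sqrt 3$ does not come from absorbing Wick cross-pairings as you conjecture; it is the hypercontractivity parameter of the Ornstein--Uhlenbeck operator $\mathrm{U}_{1/\sqrt 3}$ used to pass from a fourth moment to a second moment of the test statistic. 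Letting $k\to\infty$ then gives the infinite series. Without some such construction---a statistic whose mean gap isolates $\sum_m\|\Delta_m\|^2$ exactly and whose variance is comparable to that same quantity---your sketch does not close, and a Hellinger- or $\chi^2$-surrogate route would still face the same off-diagonal problem you flagged.
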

In particular, Theorem~\ref{thm:tight_KL_bound} implies that if $\|\Delta_m\| = 0$ for $m = 1, \dots, k-1$ and $\|\Delta_k\| \geq c \metric(\theta, \phi)$ for some constant $c$, then $D(\theta \,\|\, \phi)$ is of order $\sigma^{-2k} \metric^2(\theta, \phi)$.

\begin{proof}[Proof of Lemma~\ref{lem:first_moment}]
We first prove the following simple expression:
\begin{equation*}
D(\dist{P}_\theta \parallel \dist{P}_\phi) = D(\theta \parallel \phi) = \frac{1}{2 \sigma^2}(\|\phi\|^2 - \|\theta\|^2) + \E \log \frac{\E [e^{\frac{1}{\sigma^2} (\theta + \sigma \xi)^\top G \theta} \mid \xi ]}{\E [e^{\frac{1}{\sigma^2} (\theta + \sigma \xi)^\top G \phi} \mid \xi ]}\,,
\end{equation*}
where $\xi\sim \cN(0,I_L)$ and $G \in \cG$ is uniform and independent of $\xi$.

This claim follows directly from the definition of divergence.
Let $\sg$ the density of a standard Gaussian random variable with respect to the Lebesgue measure on $\R^L$. It holds
\begin{align*}
\frac{d\dist{P}_\theta}{d\dist{P}_\phi}(y) & = \frac{\E[\sg((y - G\theta)/\sigma)]}{\E[\sg((y - G\phi)/\sigma)]} \\
& = \frac{\E\left[\exp\left(-\frac{1}{2\sigma^2}(\|y\|^2 - 2 y^\top G \theta + \|G \theta\|^2)\right)\right]}{\E\left[\exp\left(-\frac{1}{2\sigma^2}(\|y\|^2 - 2 y^\top G \phi + \|G \phi\|^2)\right)\right]} \\
& = \exp\left(\frac{1}{2\sigma^2}(\|\phi\|^2 - \|\theta\|^2)\right)\frac{\E\left[\exp\left(\frac{1}{\sigma^2}y^\top G \theta\right)\right]}{\E\left[\exp\left(\frac{1}{\sigma^2} y^\top G \phi\right)\right]}\,,
\end{align*}
since $G$ is orthogonal.
Hence, if $Y \sim \dist{P}_\theta$, we have
\begin{align*}
D(\theta \parallel \phi) & = \E\log \frac{d\dist{P}_\theta}{d\dist{P}_\phi}(Y) = \frac{1}{2\sigma^2}(\|\phi\|^2 - \|\theta\|^2) + \E \log \frac{\E [e^{\frac{1}{\sigma^2} Y^\top G \theta} \mid Y ]}{\E [e^{\frac{1}{\sigma^2} Y^\top G \phi} \mid Y]}\,.
\end{align*}
Note that we can write $Y = G' \theta + \sigma \xi$ for a standard Gaussian vector $\xi$ and $G' \in \cG$ an independent copy of $G$.
Since $G'\in O(L)$, $Y$ has the same distribution as $G'(\theta + \sigma \xi)$.
If $G$ and $G'$ are independent and uniform, then $(G')^\top G$ has the same distribution as $G$, so
\begin{equation*}
Y^\top G \theta \stackrel{d}{=} (\theta + \sigma \xi)^\top G \theta\,,
\end{equation*}
where the above equality holds in distribution.
It yields
\begin{equation}
\label{EQ:KLexp}
D(\theta \parallel \phi) = \frac{1}{2\sigma^2}(\|\phi\|^2 - \|\theta\|^2) + \E \log \frac{\E [e^{\frac{1}{\sigma^2} (\theta + \sigma \xi)^\top G \theta} \mid \xi ]}{\E [e^{\frac{1}{\sigma^2} (\theta + \sigma \xi)^\top G \phi} \mid \xi]}\,.
\end{equation}

We now turn to the proof of Lemma~\ref{lem:first_moment}.
For convenience write $\bar \theta = \E G \theta$ and $\bar \phi = \E G \phi$.
These vectors satisfy $G \bar \theta = \bar \theta$ and $G \bar \phi = \bar \phi$ almost surely and $\vartheta^\top\bar \theta=0$. 
Hence, almost surely,
\begin{align*}
(\theta + \sigma \xi)^\top G \theta & = (\bar \theta + \vartheta + \sigma \xi)^\top G (\bar \theta + \vartheta) \\
& = (\vartheta + \sigma \xi)^\top G \vartheta + (\bar \theta + \sigma \xi)^\top \bar \theta\,,
\end{align*}
and similarly
\begin{equation*}
(\theta + \sigma \xi)^\top G \phi = (\vartheta + \sigma \xi)^\top G \varphi + (\bar \theta + \sigma \xi)^\top \bar \phi\,.
\end{equation*}

Plugging these quantities into~\eqref{EQ:KLexp} yields
\begin{align*}
D(\theta &\parallel \phi)  = \frac{1}{2\sigma^2}(\|\phi\|^2 - \|\theta\|^2) + \E \log \frac{\E [e^{\frac{1}{\sigma^2} (\vartheta + \sigma \xi)^\top G \vartheta + (\bar \theta + \sigma \xi)^\top \bar \theta} \mid \xi ]}{\E [e^{\frac{1}{\sigma^2} (\vartheta + \sigma \xi)^\top G \varphi + (\bar \theta + \sigma \xi)^\top \bar \phi} \mid \xi]} \\
& = \frac{1}{2\sigma^2}(\|\phi\|^2 - \|\theta\|^2) + \frac{1}{\sigma^2}\E (\bar \theta + \sigma \xi)^\top (\bar \theta - \bar \phi) + \E \log \frac{\E [e^{\frac{1}{\sigma^2} (\vartheta + \sigma \xi)^\top G \vartheta} \mid \xi ]}{\E [e^{\frac{1}{\sigma^2} (\vartheta + \sigma \xi)^\top G \varphi} \mid \xi]} \\
& = \frac{1}{2\sigma^2}(\|\bar \phi\|^2 - \|\bar \theta\|^2) + \frac{1}{\sigma^2} (\|\bar \theta\|^2 - \bar \theta^\top \bar \phi) + D(\vartheta \parallel \varphi) \\
& = \frac{1}{2 \sigma^2}\|\E[G \theta - G \phi]\|^2 + D(\vartheta \parallel \varphi)\,.
\end{align*}
\end{proof}
\begin{proof}[Proof of Theorem~\ref{thm:tight_KL_bound}]
If $\theta = 0$, then the conditions of the theorem imply that $\phi = 0$, so the statement is vacuous. We therefore assume $\theta \neq 0$.
The divergence $D(\theta \parallel \phi)$ and the moment tensors $\E[(G \phi)^{\otimes m}]$ are unaffected if we replace $\phi$ by $G_0 \phi$ for any $G_0 \in \cG$.
Hence without loss of generality, we can assume that $\|\theta - \phi\| = \metric(\theta, \phi) =: \ep$.
Moreover, the quantity $D(\theta \parallel \phi)$ and the bounds in question are all unaffected upon replacing $\theta$, $\phi$, and $\sigma$ by $\theta/\|\theta\|$, $\phi/\|\theta\|$, and $\sigma/\|\theta\|$, respectively, so we assume in what follows that $\|\theta\| = 1$ and $\sigma \geq 1$.

We first prove the upper bound.
Denote by $\sg$ the density of a standard $L$-dimensional Gaussian random variable.
For all $\zeta \in \R^L$, let $f_\zeta$ denote the density of $\dist{P}_\zeta$ defined in~\eqref{eq:MRA:model}.
Recall that in this model, $G$ is drawn uniformly from the Haar measure on $\cG$.
Then, for any $y \in \R^L$ we have
\begin{equation*}
f_\zeta(y) = \E \frac{1}{\sigma^d} \sg(\sigma^{-1}(y - G \zeta)) = \frac{1}{\sigma^d} \sg(\sigma^{-1} y) e^{-\frac{\|\zeta\|^2}{2 \sigma^2}}\E e^{\frac{y^\top G \zeta}{\sigma^2}}\,.
\end{equation*}
Let $\chi^2(\theta, \phi)$ denote the $\chi^2$-divergence between $\dist{P}_\theta$ and $\dist{P}_\phi$, defined by
\begin{equation*}
\chi^2(\theta, \phi) = \int \frac{(f_\theta(y) - f_\phi(y))^2}{f_\theta(y)} \mathrm dy\,.
\end{equation*}

Since $\E G \theta = 0$ by assumption, Jensen's inequality implies
\begin{equation*}
f_\theta(y) \geq \frac{1}{\sigma^d} \sg(\sigma^{-1} y) e^{-\frac{\|\theta\|^2}{2\sigma^2}}e^{\E \frac{y^\top G \theta}{\sigma^2}} = \frac{1}{\sigma^d} \sg(\sigma^{-1} y) e^{-\frac{\|\theta\|^2}{2\sigma^2}} \,.
\end{equation*}

Hence
\begin{equation*}
\frac{(f_\theta(y) - f_\phi(y))^2}{f_\theta(y)} \leq e^{\frac{\|\theta\|^2}{2 \sigma^2}} \big(e^{-\frac{\|\theta\|^2}{2 \sigma^2}}\E e^{\frac{y^\top G \theta}{\sigma^2}} - e^{-\frac{\|\phi\|^2}{2 \sigma^2}}\E e^{\frac{y^\top G \phi}{\sigma^2}}\big)^2 \big(\frac{1}{\sigma^d} \sg(\sigma^{-1} y)\big)\,.
\end{equation*}
Integrating this quantity with respect to $y$ yields a bound on the $\chi^2$ divergence. Let $\xi \sim \cN(0, I_d)$ and observe that
\begin{align*}
\chi^2(\theta, \phi) & \leq \E \Big[e^{\frac{\|\theta\|^2}{2 \sigma^2}} \big(e^{-\frac{\|\theta\|^2}{2 \sigma^2}}\E\big[ e^{\frac{\sigma \xi^\top G \theta}{\sigma^2}}\big|\xi\big] - e^{-\frac{\|\phi\|^2}{2 \sigma^2}}\E\big[ e^{\frac{\sigma \xi^\top G \phi}{\sigma^2}}\big|\xi\big]\big)^2 \Big]\\
& =  \E \Big[ e^{\frac{\|\theta\|^2}{2 \sigma^2}} \big(e^{-\frac{\|\theta\|^2}{ \sigma^2}}e^{\frac{\xi^\top (G + G')\theta}{\sigma}} - 2 e^{-\frac{\|\theta\|^2 + \|\phi\|^2 }{2 \sigma^2}} e^{\frac{\xi^\top (G \theta + G' \phi)}{\sigma}}\\
&\le 2\E\big[  e^{\frac{(G'\theta)^\top G \theta}{\sigma^2}} - 2 e^{\frac{(G'\phi)^\top G \theta}{\sigma^2}} +e^{\frac{(G'\phi)^\top G\phi}{\sigma^2}}\big]
\end{align*}
where $G' \in \cG$ is an independent copy of $G$ and we used the bound $e^{\|\theta\|^2/(2\sigma^2)} \le 2$  that holds for  $\sigma \geq 1$ and $\|\theta\|^2 \leq 1$.

The random variables $G \theta$ and $G \phi$ have moment generating functions that converge in a neighborhood of the origin, hence
\begin{align*}
\chi^2(\theta, \phi) & \le \sum_{m \geq 0} \frac{2}{\sigma^{2m} m!} \E\left[((G'\theta)^\top G \theta)^m - 2 ((G'\phi)^\top G \theta)^m + ((G'\phi)^\top G\phi)^m\right] \\
& = \sum_{m \geq 0} \frac{2}{\sigma^{2m} m!} \|\E[(G \theta)^{\otimes m}]\|^2 - 2 \langle \E[(G \theta)^{\otimes m}], \E[(G \phi)^{\otimes m}]\rangle + \|\E[(G \phi)^{\otimes m}]\|^2 \\
& = \sum_{m \geq 0} \frac{2}{\sigma^{2m} m!} \left\| \Delta_m\right\|^2 \\
 &\leq 2\sum_{m =1}^{k-1} \frac{\|\Delta_m\|^2}{\sigma^{2m}m!} + 24 \sum_{m \geq k} \frac{2^{m}}{\sigma^{2m}m!} \ep^2
  \leq 2\sum_{m =1}^{k-1} \frac{\|\Delta_m\|^2}{\sigma^{2m}m!} + 24 e^{2} \frac{\ep^2}{\sigma^{2k}}\,,
\end{align*}
where the penultimate inequality follows from Lemma~\ref{lem:tensor_bound} in Appendix~\ref{supp}.
The bound follows upon applying the inequality $D(\theta \parallel \phi) \leq \chi^2(\theta, \phi)$~\cite{Tsy09}.

We now turn to the lower bound.
Recall that the Hermite polynomials $\{h_k(x)\}_{k \geq 0}$ satisfy the following three properties~\cite{Sze75}: %
\begin{enumerate}
\item The function $h_k(x)$ is a degree-$k$ polynomial.
\item The functions $\{h_k\}_{k \geq 0}$ form an orthogonal basis of of $L_2(\gamma)$, where $\gamma$ denotes the standard Gaussian measure on $\R$, with $\|h_k\|_\mu^2 = k!$.
\item If $Y \sim \cN(\mu, 1)$, then $\E[h_k(Y)] = \mu^k$.
\end{enumerate}

Given a multi-index $\alpha \in \N^L$, define the multivariate Hermite polynomial $h_\alpha$ by
\begin{equation*}
h_\alpha(x_1, \dots, x_L) = \prod_{i=1}^L h_{\alpha_i}(x_i)\,.
\end{equation*}
The multivariate Hermite polynomials form an orthonormal basis for the space $\R[x_1, \dots, x_L]$ of $L$-variate  polynomial functions with respect to the inner product over $L_2(\gamma^{\otimes L})$.

Given $y \in \R^L$ and $m \geq 1$, denote by $H_m(y)$ the order-$m$ symmetric tensor defined as follows. The $(i_1, \ldots, i_m)$th entry of $H_m(y)$ is given by $\sigma^m h_\alpha(\sigma^{-1}y_1, \dots, \sigma^{-1}y_L)$, where $\alpha \in \{0, \ldots, m\}^L$ denotes the multi-index associated to $(i_1, \ldots, i_m)$: $\alpha_l=|\{j\in [m]\,: i_j=\ell\}|, l \in \{0,\ldots, m\}$. 
Property~3 of the Hermite polynomials implies that if $Y \sim \cN(\mu, \sigma^2 I)$, then $\E[H_m(Y)] = \mu^{\otimes m}$.

Fix $k \geq 1$, and consider the degree-$k$ polynomial
\begin{equation*}
t(y) = \sum_{m=1}^{k} \frac{\langle \Delta_m, H_m(y)\rangle}{(\sqrt 3 \sigma)^{2m} m!}\,.
\end{equation*}
Note that, if $Y \sim \dist{P}_\zeta$, then
\begin{equation*}
\E[t(Y)] = \E\left[\sum_{m=1}^{k} \frac{\langle \Delta_m, \E[H_m(Y) | G]\rangle}{(\sqrt 3 \sigma)^{2m} m!}\right] = \sum_{m=1}^{k} \frac{\langle \Delta_m, \E[(G\zeta)^{\otimes m}]\rangle}{(\sqrt 3 \sigma)^{2m} m!}\,.
\end{equation*}
Thus if $Y \sim \dist{P}_\theta$ and $Y'\sim \dist{P}_\phi$, we get
\begin{equation*}
\E[t(Y)] - \E[t(Y')] = \sum_{m=1}^{k} \frac{\|\Delta_m\|^2}{(\sqrt 3 \sigma)^{2m} m!} =: \delta\,.
\end{equation*}
Lemma~\ref{lem:tensor_bound} in Appendix~\ref{supp} implies that $\delta \leq 4$.
Moreover, by Lemma~\ref{lem:hermite_variance_bound} in Appendix~\ref{supp}, the variances of both $t(Y)$ and $t(Y')$ are bounded above by $e\delta$.
Applying Lemma~\ref{lem:testing_bound} in Appendix~\ref{supp} therefore yields
\begin{equation*}
D(\theta \, \| \, \phi) \geq \frac{\delta^2}{4 e \delta + \delta^2} \geq \frac{\delta^2}{(4e+4)\delta} > \frac{1}{15} \sum_{m=1}^{k} \frac{\|\Delta_m\|^2}{(\sqrt 3 \sigma)^{2m} m!}\,.
\end{equation*}
Since $k \geq 1$ was arbitrary and the summands are nonnegative, letting $k \to \infty$ yields the claim.
\end{proof}

\section{Supplemental materials}\label{supp}
\paragraph{Note:} \emph{In the following sections, we use $C$ and $c$ to represent constants whose value may change from expression to expression and which may depend on $L$ unless otherwise noted.}

\subsection{Comparison of the phase shift model with~\cite{BanChaSin14}}
The phase shift model we propose in Section~\ref{sec:mra} is designed to address some drawbacks of the discrete model for MRA proposed by~\cite{BanChaSin14}. As we show in this section, that model possesses several statistically undesirable properties---namely that the minimax rate of estimation over $\cT_s$ for any $1 \leq s \leq \lfloor L/2 \rfloor$ is very poor, but for reasons that do not shed any light on the statistical difficulties of applications such as cryo-EM.

We first review the discrete MRA model~\cite{BanChaSin14}. Let $\C$ be the group acting on $\RR^L$ by circular shifts of the coordinates. (Note that this group is isomorphic to the cyclic group $\ZZ/L$.) In this model, we observe independent copies of
\begin{equation*}
Y = G\theta + \sigma \xi\,,
\end{equation*}
where $G$ is drawn uniformly at random from $\C$ and $\xi \sim \cN(0, I_L)$ is independent of $G$.
As we note in Section~\ref{sec:mra}, elements of $\C$ can also be viewed as phase shifts in the Fourier domain by $L$th roots of unity.

This section establishes the following lower bound for discrete MRA.
\begin{appxthm}\label{thm:discrete_mra}
Let $1 \leq s \leq \lfloor L/2 \rfloor$.
Let $\cT_s$ be the set of vectors $\theta \in \cT$ satisfying Assumption~\ref{assumption} and $\supp(\hat \theta) \subset [s]$. In the discrete MRA model, for any $\sigma \geq \max_{\theta \in \cT_s} \|\theta\|$,
\begin{equation*}
\inf_{T_n} \sup_{\theta \in \cT_s} \E_\theta[\rho(T_n, \theta)] \geq \frac{C}{L} \Big(\frac{\sigma^L}{\sqrt n} \wedge 1 \Big)\,,
\end{equation*}
where the infimum is taken over all estimators $T_n$ of $\theta$ and where $C$ is a universal constant.
\end{appxthm}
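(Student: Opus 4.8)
The plan is to use Le Cam's two-point method, exactly as in the proof of Theorem~\ref{thm:main_lower}: I will produce $\theta_n,\phi\in\cT_s$ with $\rho(\theta_n,\phi)\gtrsim \frac1L\bigl(\frac{\sigma^L}{\sqrt n}\wedge1\bigr)$ but $D(\dist{P}_{\theta_n}^n\parallel\dist{P}_\phi^n)\le\frac12$, after which the result follows from standard techniques~\cite{LeC73,Tsy09}. The one new ingredient is the effect of the \emph{mod-$L$ structure} of the group $\C$: since $\C$ acts in the Fourier domain by multiplying the frequency-$j$ coefficient by $w^j$ with $w$ a uniform $L$th root of unity, averaging over $\C$ gives
\[
\E[(\widehat{G\theta})^{\otimes m}]_{j_1\cdots j_m}=\hat\theta_{j_1}\cdots\hat\theta_{j_m}\,\mathbf{1}\{\,j_1+\cdots+j_m\equiv 0\ (\mathrm{mod}\ L)\,\}.
\]
The crucial consequence is that if $\hat\theta$ is supported on the single pair of frequencies $\{+1,-1\}$, then for every $m\le L-1$ the only tuples $(j_1,\dots,j_m)\in\{\pm1\}^m$ contributing to this tensor are those with equally many $+1$'s and $-1$'s---because $\bigl|\sum_i j_i\bigr|\le m<L$ forces $\sum_i j_i=0$---and for such tuples the product equals $|\hat\theta_1|^m$. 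Hence the moment tensors $\E[(G\theta)^{\otimes m}]$ with $m\le L-1$ depend only on $|\hat\theta_1|$.

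Concretely, I would fix a constant $a$ with $c_0\le a$ and $c^{-1}\le\sqrt2\,a\le c$, and set $\hat\phi_1=\hat\phi_{-1}=a$ (all other coordinates zero) and $(\widehat{\theta_n})_1=az$, $(\widehat{\theta_n})_{-1}=az^\ast$ with $z=e^{\mathrm i\delta}$, $\delta\in(0,\pi/L]$ to be chosen (all other coordinates zero). Both $\theta_n$ and $\phi$ are real vectors lying in $\cT_s$ for every $s\ge1$: their positive support is $\{1\}\subset[s]$, and Assumptions~\ref{assumption:norm} and~\ref{assumption} hold by the choice of $a$. Moreover $\E G\theta_n=\E G\phi=0$, since averaging over $\C$ annihilates every frequency that is not a multiple of $L$. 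A direct computation gives $\|\theta_n-G_w\phi\|^2=2a^2|z-w|^2$ for each $L$th root of unity $w$, so $\rho(\theta_n,\phi)=\sqrt2\,a\,\min_{w\in\mu_L}|z-w|=\sqrt2\,a\,|z-1|=\sqrt2\,a\cdot2\sin(\delta/2)\asymp\delta$, where we used $\delta\le\pi/L$ to ensure that $1$ is the nearest $L$th root of unity to $z$.

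By the observation above, $\Delta_m=\E[(G\theta_n)^{\otimes m}]-\E[(G\phi)^{\otimes m}]=0$ for all $m\le L-1$ (the two signals have the same $|\hat\theta_1|$). After rescaling to $\|\theta_n\|=1$, the hypotheses of Theorem~\ref{thm:tight_KL_bound} hold ($\E G\theta_n=\E G\phi=0$, and $3\rho\le\|\theta_n\|\le\sigma$ provided $\delta\le\pi/L$ is below an absolute constant, which holds once $L$ exceeds an absolute constant), so applying that theorem with $k=L$ yields $D(\dist{P}_{\theta_n}\parallel\dist{P}_\phi)\le\overline C\,\sigma^{-2L}\rho^2(\theta_n,\phi)\lesssim\sigma^{-2L}\delta^2$. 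Choosing $\delta=c_1\bigl(\frac{\sigma^L}{\sqrt n}\wedge\frac1L\bigr)$ for a sufficiently small universal constant $c_1$ then gives $D(\dist{P}_{\theta_n}^n\parallel\dist{P}_\phi^n)=nD(\dist{P}_{\theta_n}\parallel\dist{P}_\phi)\le\frac12$, while $\rho(\theta_n,\phi)\asymp\delta\asymp\frac{\sigma^L}{\sqrt n}\wedge\frac1L\ge\frac1L\bigl(\frac{\sigma^L}{\sqrt n}\wedge1\bigr)$. Le Cam's two-point bound now gives $\inf_{T_n}\sup_{\theta\in\cT_s}\E_\theta[\rho(T_n,\theta)]\gtrsim\rho(\theta_n,\phi)\gtrsim\frac1L\bigl(\frac{\sigma^L}{\sqrt n}\wedge1\bigr)$, absorbing all $L$-dependent prefactors into $C$; the finitely many small values of $L$ are handled by further adjusting $C$.

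The step I expect to be the crux is the mod-$L$ moment-matching observation in the first paragraph: it forces the first \emph{informative} moment tensor to be of order $L$ rather than order $2s$, which is exactly what produces the catastrophic $\sigma^L$ rate and thereby exhibits the statistical defect of the discrete model relative to the phase-shift model. The remaining points are routine bookkeeping: $\rho(\theta_n,\phi)$ is now a minimum over the finite set $\mu_L$ of roots of unity, so one must keep $\delta\le\pi/L$ (this cap is precisely what yields the $1/L$ prefactor and explains why no better two-point construction is available), and one must check the hypotheses $\E G\theta_n=\E G\phi=0$ and $3\rho(\theta_n,\phi)\le\|\theta_n\|\le\sigma$ needed to invoke Theorem~\ref{thm:tight_KL_bound}.
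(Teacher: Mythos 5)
Your proposal is correct and follows essentially the same route as the paper: the mod-$L$ moment-matching observation for signals supported on frequencies $\{\pm 1\}$ is exactly the paper's Proposition~\ref{prop:discrete_matching}, and the two-point construction with a phase rotation $\delta \lesssim \pi/L$ combined with Theorem~\ref{thm:tight_KL_bound} and Le Cam's method is precisely the paper's argument. The only differences are cosmetic (your explicit normalization constant $a$ and your explicit verification of the hypotheses of Theorem~\ref{thm:tight_KL_bound}, which the paper leaves implicit).
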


The result follows from the following variant of Proposition~\ref{prop:generalized_matching}, which allows us to exhibit vectors in $\cT_s$ for any $1 \leq s \leq \lfloor L/2 \rfloor$ whose first $L-1$ moments match. The proof then follows from standard minimax technique combined with Theorem~\ref{thm:tight_KL_bound}, as in the proof of Theorem~\ref{thm:main_lower}.

As Proposition~\ref{prop:discrete_matching} makes clear, the pairs of signals which are hard to distinguish under the discrete multi-reference alignment model are ``pure harmonics'' that is, vectors whose Fourier transform is supported only on a single pair of coordinates. An example of two such signals $\theta, \phi \in \R^{17}$ appear in Figure~\ref{fig:discrete_MRA}. These two signals do not lie in the same orbit of $\C$ since there is no phase shift by a $L$th root of unity which makes them coincide, so that $\metric_{\C}(\theta, \phi) \neq 0$. However, it is clear from the perspective of the practitioner that one signal should indeed be viewed as a shift of the other, since they are both discretizations of the same underlying continuous signal. We therefore argue that minimax rates of estimation for the discrete multi-reference alignment model---which are governed by signals like the ones appearing in Figure~\ref{fig:discrete_MRA}---do not accurately reflect the statistical difficulty of problems like cryo-EM in practice.

\begin{figure}[h]
\begin{center}
\includegraphics[width=.5\textwidth]{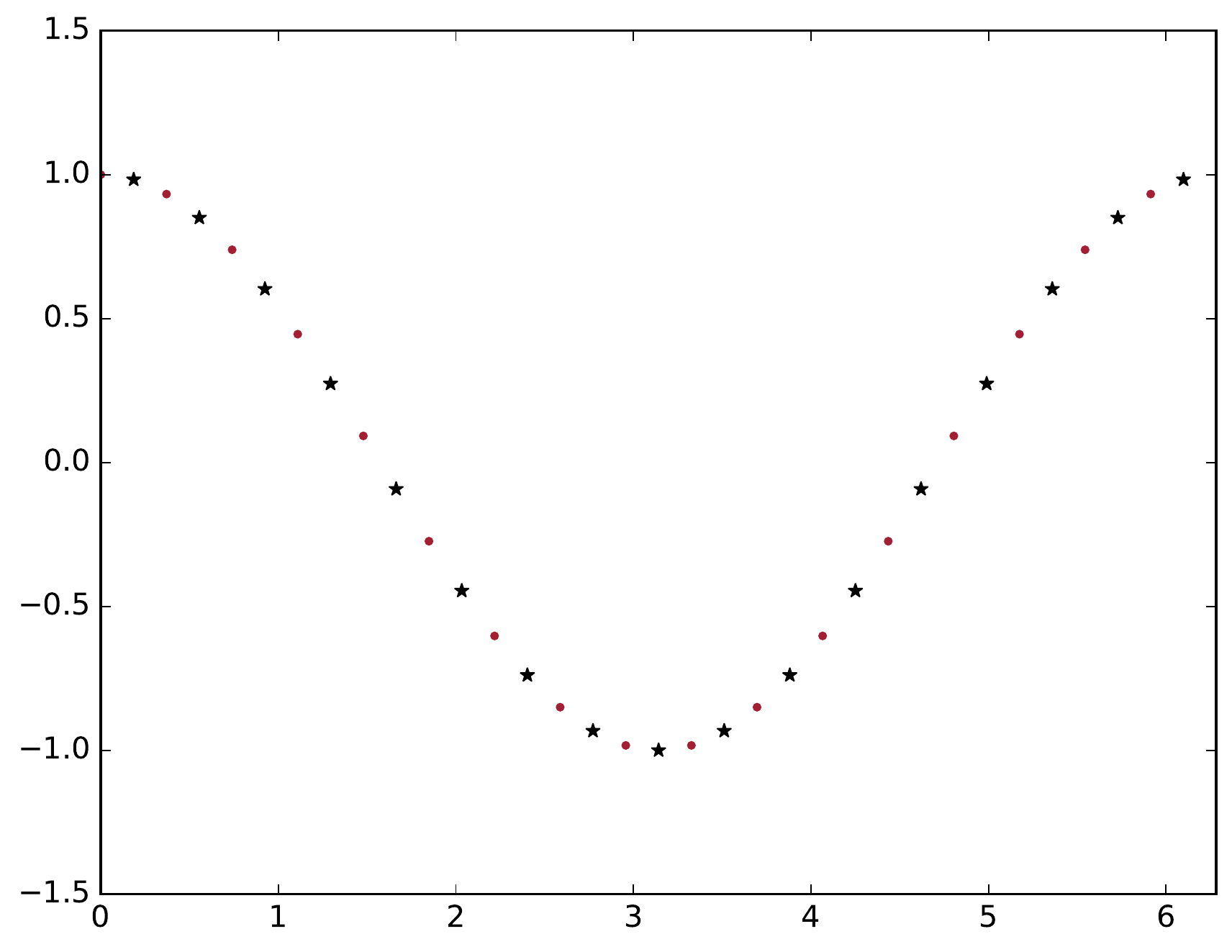}
\caption{Two signals $\theta, \phi \in \R^{17}$, plotted coordinate-wise. The signal $\theta$ (maroon dots) and the signal $\phi$ (black stars) do not lie in the same orbit of $\C$; however, they do lie in the same orbit of $\cS$.}
\label{fig:discrete_MRA}
\end{center}
\end{figure}

\begin{appxprop}\label{prop:discrete_matching}
Suppose $\theta, \phi \in \RR^L$ satisfy
\begin{equation*}
|\hat \theta_{j}| = |\hat \phi_{j}| \quad \text{for $j \in \{\pm 1\}$}
\end{equation*}
and
\begin{equation*}
\hat \theta_j = \hat \phi_j = 0 \quad \text{for $j \notin \{\pm 1\}$}\,.
\end{equation*}
If $G$ is drawn uniformly from $\C$, then for any $m = 1, \dots, L-1$, the $m$th moment tensors satsify
\begin{equation*}
\E[(G \theta)^{\otimes m}] = \E[(G \phi)^{\otimes m}]\,.
\end{equation*}
\end{appxprop}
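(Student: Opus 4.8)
The plan is to follow the proof of Proposition~\ref{prop:generalized_matching} almost verbatim, the only essential change being that the exact condition $j_1+\dots+j_m=0$ which selects the surviving entries of $\E[(G\theta)^{\otimes m}]$ in the continuous model is replaced, for the cyclic group $\C$, by the congruence $j_1+\dots+j_m\equiv 0\ (\mathrm{mod}\ L)$. Indeed, elements of $\C$ act in the Fourier domain by $\widehat{G\theta}_k=z^k\hat\theta_k$ where $z$ ranges uniformly over the $L$-th roots of unity, so that $\E[z^k]=\1[L\mid k]$.

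First I would reduce, exactly as in Proposition~\ref{prop:generalized_matching}, to showing that the function $\zeta\mapsto\E[(u^\top G\zeta)^m]$ is constant over the set $\cP=\{\zeta:|\hat\zeta_j|=|\hat\theta_j|\ \forall j\}$, which contains both $\theta$ and $\phi$; since $\E[(G\zeta)^{\otimes m}]$ is a symmetric tensor and $\langle t^{\otimes m},u^{\otimes m}\rangle=(t^\top u)^m$, equality of these polynomials in $u$ is equivalent to equality of the moment tensors. Expanding $u^\top G_z\zeta=\sum_j\hat u_{-j}\hat\zeta_j z^j$, raising to the $m$-th power, and averaging over a uniform $L$-th root of unity $z$ gives
\begin{equation*}
\E[(u^\top G\zeta)^m]=\sum_{j_1+\dots+j_m\equiv 0\ (L)}\ \prod_{n=1}^m\hat u_{-j_n}\hat\zeta_{j_n}\,,
\end{equation*}
where the sum is over $m$-tuples with entries in $\{-\lfloor L/2\rfloor,\dots,\lfloor L/2\rfloor\}$ whose sum is divisible by $L$.

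Next I would use that $\hat\zeta$ is supported on $\{\pm1\}$, so only tuples with every $j_n\in\{+1,-1\}$ contribute. If such a tuple has $\alpha$ entries equal to $+1$ and $\beta=m-\alpha$ entries equal to $-1$, then $j_1+\dots+j_m=\alpha-\beta$, and since $|\alpha-\beta|\le m\le L-1<L$ the divisibility condition forces $\alpha-\beta=0$, i.e.\ $\alpha=\beta$ (in particular $m$ is even). For such a tuple the product collapses, using that $u$ and $\zeta$ are real so $\hat u_1\hat u_{-1}=|\hat u_1|^2$ and $\hat\zeta_1\hat\zeta_{-1}=|\hat\zeta_1|^2$, to $(|\hat u_1|^2|\hat\zeta_1|^2)^{\alpha}$. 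Hence every surviving term, and therefore $\E[(u^\top G\zeta)^m]$, depends on $\zeta$ only through $|\hat\zeta_1|$, which is the same for every $\zeta\in\cP$. This proves the claim for all $m\le L-1$.

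There is essentially no serious obstacle here: the argument is a strictly simplified version of the one already carried out for Proposition~\ref{prop:generalized_matching}, with the coprimality argument replaced by the trivial observation that a $\pm1$-valued sum of length at most $L-1$ cannot be a nonzero multiple of $L$. The only place requiring a little care is the bookkeeping of the multi-index counts $\alpha,\beta$; conceptually, the point worth highlighting is that the bound $m\le L-1$ is exactly what turns the congruence into the exact identity $\alpha=\beta$, which is why the discrete model admits $L-1$ matching moments for \emph{every} $s$ and hence the uniformly poor rate recorded in Theorem~\ref{thm:discrete_mra}.
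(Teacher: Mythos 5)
Your proof is correct and follows essentially the same route as the paper's: reduce to showing $\zeta\mapsto\E[(u^\top G\zeta)^m]$ is constant on the set of signals with prescribed Fourier moduli, observe that averaging over $L$th roots of unity replaces the exact constraint $j_1+\dots+j_m=0$ by the congruence modulo $L$, and note that for $\pm1$-supported signals and $m\le L-1$ the congruence forces an equal number of $+1$'s and $-1$'s, so each surviving term depends only on $|\hat\zeta_1|^2$. The bookkeeping with $\alpha$ and $\beta$ and the remark that this is exactly where the bound $m\le L-1$ enters are both accurate.
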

\begin{proof}
Fix $m < L$.
We follow the proof of Proposition~\ref{prop:generalized_matching}.
It is not hard to verify that in the discrete MRA model, for any $u, \zeta \in \RR^L$,
\begin{equation*}
\E_g[(u^\top G \zeta)^m] = \sum_{j_1 + \dots + j_m \equiv 0} \prod_{n=1}^m \hat u_{-j_n} \hat \zeta_{j_n}\,.
\end{equation*}
Where the sum is taken over all $j_1, \dots, j_m \in [L]$ whose sum is congruent to $0$ modulo $L$. (Note the difference with~\eqref{EQ:sumofji}.)

If $\zeta \in \{\theta, \phi\}$, then $\hat \zeta_j = 0$ for $j \notin \{\pm 1\}$, so it suffices to consider sums whose entires are all $\pm 1$.
Because $m < L$, the only way such a sum can be congruent to $0$ modulo $L$ is if $+1$ and $-1$ occur an equal number of times.
As in the proof of Proposition~\ref{prop:generalized_matching}, this implies that the quantity $\E[(u^\top G \zeta)^m]$ depends only on the modulus $|\hat \zeta_1|$ which is constant over $\zeta \in \{\theta, \phi\}$, which concludes the proof.
\end{proof}

\begin{proof}[Proof of Theorem~\ref{thm:discrete_mra}]

Fix $z = e^{\mathrm{i} \delta}$ for $\delta = c_1 (\sigma^{L}/\sqrt n \wedge 1)$, for some small constant $c_1 < \pi/L$ to be chosen later.
Define $\phi$ by
\begin{equation*}
\hat \phi_j = \left\{\begin{array}{ll}
1 & \text{ if $j \in \{\pm1\}$} \\
0 & \text{ otherwise}\end{array}\right.
\end{equation*}
and $\theta_n$ by
\begin{equation*}
(\widehat{\theta_n})_j = \left\{\begin{array}{ll}
z & \text{ if $j = 1$} \\
z^* & \text{ if $j = -1$} \\
0 & \text{ otherwise.}\end{array}\right.
\end{equation*}
Write $\omega = e^{\mathrm{i}2\pi/L}$ for a primitive $L$th root of unity. The definition of the group $\C$ implies that as long as $\delta < \pi/L$,
\begin{equation*}
\metric^2(\theta_n, \phi) = \min_{G \in \C} \|\theta_n - G\phi\|^2 = 2 \min_{k \in [L]} \|z - \omega^k\|^2 = 2 \|z - 1\|^2\,,
\end{equation*}
so $c \delta^2 \leq \metric^2(\theta_n, \phi) \leq C \delta^2$.
Proposition~\ref{prop:discrete_matching} and Theorem~\ref{thm:tight_KL_bound} imply that, for $c_1$ sufficiently small,
\begin{equation*}
D(\dist{P}_{\theta_n}^n \parallel \dist{P}_\phi^n) = n D(\theta_n \parallel \phi) \leq \overline{C} n \sigma^{-2L} \metric^2(\theta_n, \phi) \leq \frac 12\,,
\end{equation*}
and standard minimax techniques~\cite{Tsy09} yield the bound.
\end{proof}

We note that, by following the proof of Proposition~\ref{prop:lambda_P_bound}, one can show that the optimal $\sigma^L/\sqrt n$ rate is achieved by a modified MLE.
\subsection{Proof of Theorem~\ref{thm:small_s}}
In all proofs, we use $C$ and $c$ to represent constants whose value may change from expression to expression and which may depend on $L$ unless otherwise noted.
We consider the cases $s = 0$ and $s = 1$ separately.

If $s = 0$, then the set $\cT_s$ consists of vectors whose Fourier transforms are supported only on the $0$th coordinate.
In other words, any vector in $\cT_s$ is a multiple of the all-ones vector $\bone$ of $\R^L$.%
The group $\cS$ acts as the identity on this subspace, so for any $\theta \in \cT_s$, the distribution $\dist{P}_\theta$ is just $\cN(\theta, \sigma^2 I_L)$, where all the coordinates of $\theta$ are equal: $\theta = \overline \theta \bone$ for $\overline \theta \in \R$.
We therefore interpret $n$ samples $Y_1, \dots, Y_n$ from $\dist{P}_\theta$ as $nL$ samples $X_1, \dots, X_{nL}$ from $\cN(\bar \theta, \sigma^2)$.%
The estimator $\check \theta_n = \left(\frac{1}{nL} \sum_{i=1}^{nL} X_i\right) \bone$ achieves
\begin{equation*}
\sup_{\theta \in \cT_0} \E_\theta[\metric(\check \theta_n, \theta)] \leq \frac{\sigma}{\sqrt n}\,.
\end{equation*}

If $s = 1$, then $\cT_s$ consists of vectors whose Fourier transforms have support lying in $\{-1, 0, 1\}$.
This subspace is spanned by the orthogonal vectors %
$\bone$,
$u := \{2L^{-1/2} \cos(2 \pi k/L)\}_{k=1}^L$, and $v := \{2L^{-1/2} \sin(2 \pi k/L)\}_{k=1}^L$. Restricted to this space, the action of $\cS$ is isomorphic to the action of $\mathrm{SO}(2)$ given by fixing $\bone$ and rotating the space spanned by $\{u, v\}$.
In particular, this implies that any element of $\cT_1$ is in the same orbit as a vector $\alpha \1 + \beta u$, where $\beta \geq 0$.
We therefore assume without loss of generality that $\theta \in \cT_1$ is of this form.
We write in what follows $a_+$ for the quantity $\max\{a, 0\}$.

We exhibit an estimator achieving
\begin{equation*}
\sup_{\theta \in \cT_1}\E_\theta[\metric(\check \theta_n, \theta)] \leq C \frac{\sigma^2}{\sqrt n}\,.
\end{equation*}
We can assume that $n \geq C_0 \sigma^4$ for some constant $C_0$ to be specified, since otherwise the bound is vacuous.

Recall that, by Assumption~\ref{assumption}, there exists a constant $c_0$ such that if $\theta \in \cT_1$, then either $\hat \theta_1 = 0$ or $|\hat \theta_1| \geq c_0$.
Given samples $Y_1, \dots, Y_n$, as in Lemma~\ref{lem:projection_construction}, define $M_1 = \frac 1 n \sum_{i=1}^n |\widehat{(Y_i)}_1|^2 - \sigma^2$.
The proof of that lemma establishes that $\E[M_1] = |\hat \theta_1|^2$, $\var[M_1] \leq C \sigma^4/n$ and that $M_1$ satisfies a tail bound of the form
\begin{equation*}
\p[|M_1 - \E M_1| \geq \frac 12 c_0^2] \leq 2 \exp(- c n \sigma^{-4})\,,
\end{equation*}
where $c$ is a constant depending on $c_0$.
We denote by $\cE$ the high probability event on which $|M_1 - \E M_1| \leq c_0^2/2$.

Define the thresholded quantity
\begin{equation*}
\tilde M_1 = \left\{\begin{array}{ll}
M_1 & \text{ if $M_1 \geq \frac 12 c_0^2$,} \\
0 & \text{ otherwise}.
\end{array}\right.
\end{equation*}
We will employ $\sqrt{\tilde M_1}$ as an estimator for the quantity $|\hat \theta_1|$, and consequently define
\begin{equation*}
\check \theta_n = \Big(\frac{1}{nL} \sum_{i=1}^n \sum_{k = 1}^L (Y_i)_k\Big) \bone + \sqrt{\tilde M_1} u\,.
\end{equation*}

For $\theta \in \cT_1$, we obtain
\begin{equation*}
\E[\metric^2(\check \theta_n, \theta)] = L\E\Big[\Big(\frac{1}{nL} \sum_{i=1}^n \sum_{k = 1}^L (Y_i)_k - \overline \theta\Big)^2\Big] + 2 \E[(\sqrt{\tilde M_1} - |\hat \theta_1|)^2]\,.
\end{equation*}
As argued above, the first term is at most $\sigma^2/n$. To control the second term, we split our analysis into two cases.

First, suppose $\hat \theta_1 = 0$.
Then the Cauchy-Schwarz inequality implies
\begin{equation*}
\E[(\sqrt{\tilde M_1} - |\hat \theta_1|)^2] = \E [M_1 \1_{M_1 \geq \frac 12 c_0^2}] \leq \E [M_1 \1_{\cE^c}] \leq \Big(\E M_1^2 \p[\cE^c]\Big)^{1/2} \leq C\frac{\sigma^4}{n}\,,
\end{equation*}
which implies the claim when $\hat \theta_1 = 0$.

We now suppose that $|\hat \theta_1| \neq 0$, which implies by Assumptions~\ref{assumption:norm} and~\ref{assumption} that $c_0 \leq |\hat \theta_1| \leq c$.
Note that
\begin{equation*}
\E[(\sqrt{\tilde M_1} - |\hat \theta_1|)^2] = \E[(\sqrt{\tilde M_1} - |\hat \theta_1|)^2\1_{M_1 \geq \frac 12 c_0^2}] + \E[(\sqrt{\tilde M_1} - |\hat \theta_1|)^2\1_{M_1 < \frac 12 c_0^2}]\,.
\end{equation*}
If $M_1 < \frac 12 c_0^2$, then $\tilde M_1 = 0$ and $(\sqrt{\tilde M_1} - |\hat \theta_1|)^2 = |\hat \theta_1|^2 \leq c^2$, so
\begin{equation}\label{eqn:m1_small}
\E[(\sqrt{\tilde M_1} - |\hat \theta_1|)^2\1_{M_1 < \frac 12 c_0^2}] \leq c^2 \p[M_1 < \frac 12 c_0^2] \leq c^2 \p[\cE^c] \leq C \frac{\sigma^4}{n}\,.
\end{equation}

If $M_1 \geq \frac 12 c_0^2$, then $M_1 = \tilde M_1$.
We obtain
\begin{align*}
\E[(M_1 - |\hat \theta_1|^2) \1_{\cE}] & = \E[(\tilde M_1 - |\hat \theta_1|^2) \1_{\cE}] \\
& = \E[(\sqrt{\tilde M_1} - |\hat \theta_1|)^2\1_{\cE}(\sqrt{\tilde M_1} + |\hat \theta_1|)^2] \\
& \geq c_0^2 \E[(\sqrt{\tilde M_1} - |\hat \theta_1|)^2\1_{\cE}]
\end{align*}

Note that
\begin{equation*}
\E[(M_1 - |\hat \theta_1|^2) \1_{\cE}] \leq \E[(M_1 - |\hat \theta_1|^2)] \leq C \frac{\sigma^4}{n}\,.
\end{equation*}
Combining the above displays yields
\begin{equation}\label{eqn:m1_big}
\E[(\sqrt{\tilde M_1} - |\hat \theta_1|)^2\1_{\cE}] \leq c_0^{-2} \E[(M_1 - |\hat \theta_1|^2) \1_{\cE}] \leq C \frac{\sigma^4}{n}\,.
\end{equation}
Together~\eqref{eqn:m1_small} and~\eqref{eqn:m1_big} imply the claimed bound when $\hat \theta_1 \neq 0$.
\qed

\subsection{Proof of proposition~\ref{prop:lambda_P_bound}}

In what follows, the symbols $c$ and $C$ will refer to unspecified positive constants whose value may change from line to line.

By Lemma~\ref{lem:first_moment},
\begin{equation}\label{eqn:k1_bound}
D(\phi) = \frac{1}{2\sigma^2}\| \E[G \theta - G \phi]\|^2 + D(\vartheta \parallel \varphi)\,,
\end{equation}
where $\vartheta = \theta - \E G\theta$ and $\varphi = \phi - \E G \phi$.

If $|\hat \theta_0 - \hat \phi_0| \geq \frac 1 2 \metric(\theta, \phi)$, then~\eqref{eqn:k1_bound} implies
\begin{equation*}
D(\phi) \geq \frac{1}{2\sigma^2}\| \E[G \theta - G \phi]\|^2 = \frac{1}{2\sigma^2}(\hat \theta_0 - \hat \phi_0)^2 \ge \frac{\metric^2(\theta, \phi)}{8\sigma^2} \geq \frac 1 8 \sigma^{-4s+2} \metric^2(\theta, \phi)\,.
\end{equation*}
On the other hand, if $|\hat \theta_0 - \hat \phi_0| < \frac 1 2 \metric(\theta, \phi)$, then
$$
\metric(\vartheta, \varphi)^2 = \metric(\theta_S, \phi)^2  - |\hat \theta_0 - \hat \phi_0|^2 \ge 3\metric^2(\theta, \phi)/4\,.
$$
Thus, by~\eqref{eqn:k1_bound}, it suffices to show that
\begin{equation*}
D(\vartheta \parallel \varphi) \ge C \sigma^{-4s+2} \metric(\vartheta, \varphi)^2\,,
\end{equation*}
for vectors $\vartheta$ and $\varphi$ satisfying $\E G \vartheta  = \E G \varphi = 0$.
In what follows, write $\metric(\vartheta, \varphi) = \ep$.
Since $D(\varphi)=D(G\varphi)$ for all $G \in \cS$, we may assume that $\|\vartheta - \varphi\| = \ep$.
We will show that there exists a small positive constant $c$ such that for some $m \leq 2 s - 1$,
\begin{equation*}
\|\Delta_m\| := \|\E[(G\theta)^{\otimes m} - (G\phi)^{\otimes m}]\| \geq c \ep\,,
\end{equation*}
and the claim will follow from Theorem~\ref{thm:tight_KL_bound}.
We denote by $\ep_0$ and $\kappa$ small constants whose values will be specified.
We split the proof into cases.
\paragraph{Case 1: $\metric(\vartheta, \varphi) \leq \ep_0$}
There are two cases: either $\vartheta$ and $\varphi$ have essentially the same power spectrum (i.e., $|\hat \vartheta_k| \approx |\hat \varphi_k|$ for all $k$) or their power spectra are very different.
We will treat these two cases separately.

Recall that for each $j \in S$, by Assumptions~\ref{assumption:norm} and~\ref{assumption}, the bounds $c_0^{-1}\le| \hat \vartheta_j| \le c$ hold. Consider the polar form $\hat \varphi_j/\hat \vartheta_j = r_j e^{\mathrm{i} \delta_j}$,
where $r_j\ge 0$.

\paragraph{Case 1(a): There exists $j \in S$ such that $|1 - r_j| \ge \kappa \eps$}
The fact that $|\hat \vartheta_j| \geq c_0^{-1}$ implies
\begin{align*}
\|\Delta_2\|^2 & = \|\E[(G \vartheta)^{\otimes 2} - (G \varphi)^{\otimes 2}]\|^2 \\
& = \sum_{k = -\lfloor L/2 \rfloor}^{\lfloor L/2 \rfloor} (|\hat \vartheta_k|^2 - |\hat \varphi_k|^2)^2 \\
& \geq (|\hat \vartheta_j|^2 - |\varphi_j|^2)^2 \\
& \geq |\hat \vartheta_j|^4 (1 - r_j^2)^2 \\
& \geq c_0^{-4} (1+r_j)^2(1-r_j)^2 \\
& \geq c_0^{-4} \kappa^2 \eps^2\,,
\end{align*}
so that $\|\Delta_2\| \geq c \eps$.

\paragraph{Case 1(b): $|1 - r_j| < \kappa \ep$ for all $j \in S$}
Denote by $p$ the smallest integer in $S$ and observe that 
\begin{equation*}
\eps^2= \metric(\vartheta, \varphi)^2 = \min_{z: |z| = 1} 2 \sum_{j \in S} |1-r_jz^je^{\mathrm{i}\delta_j}|^2|\hat \vartheta_j|^2 \le C \sum_{j \in S} |1 - r_j e^{\mathrm{i}(p\delta_j-j \delta_p)/p}|^2\,,
\end{equation*}
where the inequality follows from choosing $z = e^{-\mathrm{i} \delta_p/p}$.
Therefore, there exists a coordinate $\ell \in S$ such that
\begin{equation}\label{EQ:deltaphases}
|1 - e^{\mathrm{i} (p \delta_\ell - \ell \delta_p)/p}| \geq |1 - r_\ell e^{\mathrm{i}(p \delta_\ell - \ell \delta_p)/p}| - \kappa \ep \geq c \ep\,,
\end{equation}
as long as $\kappa $ is chosen sufficiently small. In particular, $|1 - e^{\mathrm{i} (p \delta_\ell - \ell \delta_p)/p}| > 0$, so $\ell \neq p$.
Note that this fact implies that, if $|1 - r_j| < \kappa \ep$ for all $j \in S$, then $|S| \geq 2$.

Choose $m = \ell + p$.
Since $\ell, p \in S \subseteq [s]$ and $\ell \neq p$, the bound $m \leq 2s - 1$ holds.
As in the proof of Proposition~\ref{prop:generalized_matching}, we have that
\begin{align*}
\|\Delta_m\|^2 & = \sum_{j_1 + \dots + j_{m} = 0} \left|\prod_{n = 1}^{m} \hat \vartheta_{j_n}- \prod_{n = 1}^{m} \hat \varphi_{j_n}\right|^2 \\
& = \sum_{j_1 + \dots + j_{m} = 0} \left|1 - \prod_{n = 1}^{m} r_{j_n} e^{\mathrm{i} \delta_{j_n}}\right|^2 \prod_{n = 1}^{m} |\hat \vartheta_{j_n}|^2\,.
\end{align*}
Each term in the above sum is positive.
One valid solution to the equation $j_1 + \dots + j_m = 0$ is $j_1 = \dots = j_\ell = -p$ and $j_{\ell + 1} = \dots = j_m = \ell$.
We obtain
\begin{align*}
\|\Delta_m\|^2 &\ge C \left|1 - e^{i (p\delta_\ell - \ell \delta_p)} \prod_{n = 1}^{m} r_{j_n} \right|^2 \\
& \geq C |1 - e^{i (p\delta_\ell - \ell \delta_p)}|^2 - C\left|1 - \prod_{n = 1}^{m} r_{j_n}\right|^2\,.
\end{align*}
As long as $\kappa \ep$ is small enough, $\left|1 - \prod_{n = 1}^{m} r_{j_n}\right| \leq 2 m \kappa \ep$. Moreover, as long as $\ep_0 $ is chosen sufficiently small, $\delta_\ell$ and $\delta_p$ can both be chosen small enough that $|p \delta_\ell - \ell \delta_p| \leq 1$, in which case it holds
$$
|1 - e^{i (p\delta_\ell - \ell \delta_p)}|^2 \ge |1 - e^{i (p \delta_\ell - \ell \delta_p)/p}|^2 \ge c^2\eps^2\,,
$$
where the last inequality follows from~\eqref{EQ:deltaphases}.
So for $\ep_0$ and $\kappa$ chosen sufficiently small, this proves the existence of an $m \leq 2s -1$ for which $\|\Delta_m\| \geq c \ep$.

\paragraph{Case 2: $\metric(\vartheta, \varphi) > \eps_0$ and $\|\varphi\| > 3\|\vartheta\|$}
Lemma~\ref{lem:second_moment_bound} implies that
\begin{equation*}
\|\Delta_2\|^2 \geq \frac{1}{4L} \ep^4 \geq \frac{\ep_0^2}{4L} \ep^2\,,
\end{equation*}
so there exists a constant $c$ such that $\|\Delta_2\| \geq c \ep$.

\paragraph{Case 3: $\metric(\vartheta, \varphi) > \eps_0$ and $\|\varphi\| \leq 3\|\vartheta\|$}
For positive integers $j, k$, denote by $[j, k]$ their greatest common divisor.
Given a vector $\zeta \in \R^L$, denote by $\cP$ the following set of polynomials in the entries of $\hat \zeta$:
\begin{align*}
p_0(\zeta) & = \hat \zeta_0, \\
p_j(\zeta) & = |\hat \zeta_j|^2 \quad \text{for $1 \leq j \leq \lfloor L/2 \rfloor$,} \\
p_{jk}(\zeta) & = \hat \zeta_{-k}^{j/[j, k]} \hat \zeta_j^{k/[j, k]} \text{for $1 \leq j,k \leq \lfloor s \rfloor$.}
\end{align*}
Note that each of the the polynomials in $\cP$ appear as entries in $\E[(G\zeta)^{\otimes m}]$ for some $m \leq 2s -1$.

If $\metric(\vartheta, \varphi) > 0$, then by~\cite{KakIve93}, there exists at least one polynomial $p \in \cP$ such that
\begin{equation*}
p(\vartheta) \neq p(\varphi)\,.
\end{equation*}
For all $\vartheta \in \R^L$, define $B_{\vartheta, \ep_0} = \{\varphi: \rho(\vartheta, \varphi) \geq \ep_0, \|\varphi\| \leq 3\|\vartheta\|\}$.
It is clear that $B_{\vartheta, \ep_0}$ is compact so that
\begin{equation*}
\delta = \inf_{\vartheta \in \cT_s} \inf_{\varphi \in B_{\vartheta, \ep_0}} \min_{p \in \cP: p(\varphi) \neq p(\vartheta)} |p(\varphi) - p(\vartheta)|>0\,.
\end{equation*}
Note that $\delta$ does not depend on $\theta$ or $\phi$.
Since $\rho(\vartheta, \varphi) > \ep_0$ by assumption, there exists a $p \in \cP$ such that
$|p(\varphi) - p(\vartheta)| \geq \delta.$
Therefore there exists a positive integer $m \leq 2s -1$ such that $\|\Delta_m\| \geq \delta$, and since $\metric(\vartheta, \varphi) \leq \|\varphi\| + \|\vartheta\| \leq C$ for $\varphi \in B_{\vartheta, \ep_0}$, we obtain
\begin{equation*}
\|\Delta_m\| \geq \delta \geq c \ep\,.
\end{equation*}

\subsection{Additional lemmas}

\begin{appxlem}\label{lem:logsumexp_lipschitz}
Let $G$ be a random element drawn according to the Haar probability measure on any compact subgroup $\cG$ of the orthogonal group in $L$ dimensions.
For any $u \in \RR^L$, the function
\begin{equation*}
g(x) = \log \E \exp(u^\top G x)
\end{equation*}
is $\|u\|$-Lipschitz with respect to the Euclidean distance on $\R^L$.
\end{appxlem}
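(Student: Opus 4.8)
The plan is the standard one for Lipschitz bounds on log-partition functions: show $g$ is differentiable, identify $\nabla g(x)$ as an average of vectors of norm $\|u\|$, and conclude via the mean value inequality.

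First I would record that $g$ is well-defined and smooth. Since $\cG$ is compact and each $G\in\cG$ is orthogonal, $|u^\top G x|\le \|u\|\,\|x\|$, so the integrand $\exp(u^\top G x)$ is bounded, strictly positive, and integrable against the Haar measure; write $\varphi(x)=\E\exp(u^\top G x)>0$, so $g=\log\varphi$ is defined everywhere. Differentiation under the expectation is justified by dominated convergence (the family $\{G^\top u\,\exp(u^\top Gx')\}_{x'}$ is uniformly bounded on any bounded neighborhood of $x$, again using compactness of $\cG$), giving
\begin{equation*}
\nabla g(x) \;=\; \frac{\E\big[(G^\top u)\,\exp(u^\top G x)\big]}{\E\big[\exp(u^\top G x)\big]}\,.
\end{equation*}

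Next I would interpret this as an expectation under a tilted probability measure. Define $d\mu_x(G) = \varphi(x)^{-1}\exp(u^\top G x)\,d\mathrm{Haar}(G)$, which is a probability measure on $\cG$ since $\varphi(x)$ is exactly the normalizing constant. Then $\nabla g(x) = \int_{\cG} G^\top u \, d\mu_x(G)$. Because $G$ is orthogonal, $\|G^\top u\| = \|u\|$ for every $G\in\cG$, so by the triangle inequality for Bochner integrals (equivalently, Jensen applied to the convex function $\|\cdot\|$),
\begin{equation*}
\|\nabla g(x)\| \;\le\; \int_{\cG} \|G^\top u\|\,d\mu_x(G) \;=\; \|u\|\,.
\end{equation*}
Finally, since $g$ is continuously differentiable on $\R^L$ with $\sup_z\|\nabla g(z)\|\le\|u\|$, the mean value inequality gives $|g(x)-g(y)|\le \|u\|\,\|x-y\|$ for all $x,y$, which is the claim.

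The only point requiring any care is the interchange of differentiation and expectation, and this is entirely routine here precisely because $\cG$ is compact (so $\|Gx\|\equiv\|x\|$ and all relevant integrands are uniformly bounded on compacts); there is no substantive obstacle. If one wishes to avoid differentiation altogether, an alternative is to bound $g(x)-g(y)=\log \E_{\mu_y}\exp\!\big(u^\top G(x-y)\big)$ and use $|u^\top G(x-y)|\le\|u\|\,\|x-y\|$ together with convexity of $t\mapsto \log\E_{\mu_y}e^{tZ}$ evaluated at the endpoints; but the gradient argument above is the cleanest.
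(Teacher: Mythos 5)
Your argument is correct and is essentially identical to the paper's proof: differentiate under the expectation, bound $\|\nabla g(x)\|$ by $\|u\|$ using the triangle inequality and orthogonality of $G$, and conclude by the mean value inequality. The extra care you take in justifying the interchange of differentiation and expectation is sound but not a departure from the paper's route.
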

\begin{proof}
Differentiating $g$ yields
\begin{align*}
\|\nabla g(x)\| & = \left\|\frac{ \E \, G^\top u \exp(u^\top G x)}{\E \exp(u^\top G x)}\right\| \\
& \leq \frac{\E \|G^\top u\| \exp(u^\top G x)}{ \E \exp(u^\top G x)} \\
& = \|u\|\,,
\end{align*}
which implies the claim.
\end{proof}

\begin{appxlem}\label{lem:symmetrized_subgaussian}
If $X$ is subgaussian with variance proxy $\sigma^2$ and $\ep$ is a Rademacher random variable independent of $X$, then $\ep X$ is subgaussian with variance proxy $\sigma^2 + (\E X)^2$.
\end{appxlem}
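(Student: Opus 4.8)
The plan is to work directly with the moment generating function, using the standard convention that ``$X$ is subgaussian with variance proxy $\sigma^2$'' means $\E[\exp(\lambda(X-\E X))] \le \exp(\lambda^2\sigma^2/2)$ for all $\lambda \in \R$. First I would note that $\E[\varepsilon X] = \E[\varepsilon]\,\E[X] = 0$ by independence, so $\varepsilon X$ is centered and it suffices to establish
\begin{equation*}
\E[\exp(\lambda \varepsilon X)] \le \exp\Big(\tfrac{\lambda^2}{2}\big(\sigma^2 + (\E X)^2\big)\Big) \qquad \text{for all } \lambda \in \R\,.
\end{equation*}

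The key steps are short. Conditioning on $X$ and averaging over the Rademacher variable gives
\begin{equation*}
\E[\exp(\lambda \varepsilon X)] = \E\big[\cosh(\lambda X)\big] = \tfrac12\E[\exp(\lambda X)] + \tfrac12\E[\exp(-\lambda X)]\,.
\end{equation*}
Then I would invoke the subgaussianity of $X$ in the form $\E[\exp(\pm\lambda X)] = \exp(\pm\lambda\E X)\,\E[\exp(\pm\lambda(X - \E X))] \le \exp(\pm\lambda\E X)\exp(\tfrac{\lambda^2\sigma^2}{2})$, which after averaging the two signs yields
\begin{equation*}
\E[\exp(\lambda\varepsilon X)] \le \exp\Big(\tfrac{\lambda^2\sigma^2}{2}\Big)\cosh(\lambda\E X)\,.
\end{equation*}
Finally, applying the elementary inequality $\cosh(t) \le \exp(t^2/2)$ with $t = \lambda\E X$ produces the claimed bound.

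There is no genuine obstacle here: the argument is Fubini plus the definition of the variance proxy, and the only non-immediate ingredient is the scalar estimate $\cosh t \le e^{t^2/2}$, which itself follows from the term-by-term comparison $1/(2k)! \le 1/(2^k k!)$ of Taylor coefficients. If one prefers to avoid that inequality, an alternative is to bound $\cosh(\lambda \E X) = \E[\exp(\lambda \varepsilon' \E X)]$ for an auxiliary Rademacher $\varepsilon'$ and note that a Rademacher sign times a constant $a$ is itself subgaussian with variance proxy $a^2$, giving the same conclusion; but the direct route above is cleanest.
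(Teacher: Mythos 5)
Your proof is correct and is essentially the paper's argument: both reduce $\E[\exp(\lambda\ep X)]$ to an average of $\E[e^{\pm\lambda X}]$, apply the subgaussian bound conditionally, and finish with the bound $\cosh(\lambda\E X)\le \exp(\lambda^2(\E X)^2/2)$, which is exactly the Hoeffding's-lemma step the paper invokes for the Rademacher variable times the constant $\E X$. No substantive difference.
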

\begin{proof}
We aim to show that if $X$ satisfies
\begin{equation*}
\E \exp( t (X - \E X)) \leq \exp(t^2 \sigma^2/2) \quad \quad \forall t \in \R\,,
\end{equation*}
and if $\ep$ is a Rademacher random variable independent of $X$, then
\begin{equation*}
\E \exp( t (\ep X - \E \ep X)) = \E \exp( t \ep X ) \leq \exp(t^2 (\sigma^2 + (\E X)^2)/2) \quad \quad \forall t \in \R\,.
\end{equation*}
Conditioning on $\ep$ yields
\begin{align*}
\E \exp( t \ep X ) & = \E[\E[\exp( t \ep X ) \mid \ep]] \\
& = \E[ \E[\exp(t \ep (X - \E X)) \mid \ep] \exp (t \ep \E X)] \\
& \leq \exp(t^2 \sigma^2/2)  \E[ \exp (t \ep \E X) ] \\
& \leq \exp(t^2 (\sigma^2 + (\E X)^2)/2)\,,
\end{align*}
where the last step uses Hoeffding's lemma.
This proves the claim.
\end{proof}

\begin{appxlem}\label{lem:dominance}
If $X$ and $Y$ are random variables satisfying $|X| \leq |Y|$ almost surely, and if $\ep$ is a Rademacher random variable independent of $X$ and $Y$, then
\begin{equation*}
\E \exp(t \ep X) \leq \E \exp(t \ep Y) \quad \quad \forall t \in \R\,.
\end{equation*}
\end{appxlem}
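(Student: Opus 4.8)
The plan is to condition on the pair $(X,Y)$ and exploit the fact that a Rademacher average of an exponential is a hyperbolic cosine, which depends on its argument only through its magnitude. Concretely, since $\ep$ is independent of $(X,Y)$ and takes the values $\pm 1$ with equal probability, for any fixed real number $a$ we have $\E[\exp(t\ep a)\mid X,Y] = \tfrac12 e^{ta} + \tfrac12 e^{-ta} = \cosh(ta)$. Applying this with $a = X$ and then with $a = Y$ gives
\begin{equation*}
\E[\exp(t\ep X)\mid X,Y] = \cosh(tX), \qquad \E[\exp(t\ep Y)\mid X,Y] = \cosh(tY)\,.
\end{equation*}

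Next I would use the elementary observation that $s\mapsto\cosh(s)$ is an even function that is nondecreasing on $[0,\infty)$, so that $\cosh(s)$ is a nondecreasing function of $|s|$. Since by hypothesis $|tX| = |t|\,|X| \le |t|\,|Y| = |tY|$ almost surely, this yields the pointwise inequality $\cosh(tX)\le\cosh(tY)$ almost surely.

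Finally, taking expectations over $(X,Y)$ and invoking the tower property,
\begin{equation*}
\E\exp(t\ep X) = \E[\cosh(tX)] \le \E[\cosh(tY)] = \E\exp(t\ep Y)\,,
\end{equation*}
which is exactly the claimed inequality, and it holds for every $t\in\R$ since $t$ was arbitrary. There is no real obstacle in this argument: the only points needing (brief) justification are the joint independence of $\ep$ from $(X,Y)$—which is what allows the conditional Rademacher expectation to be evaluated in closed form—and the monotonicity of $\cosh$ as a function of the absolute value of its argument.
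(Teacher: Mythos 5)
Your proof is correct and follows essentially the same route as the paper's: condition out the Rademacher sign to get a $\cosh$, then use that $\cosh$ is even and increasing on $[0,\infty)$ together with $|tX|\le|tY|$ almost surely. Conditioning on the pair $(X,Y)$ rather than on each variable separately is a minor (and if anything cleaner) variation, not a different argument.
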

\begin{proof}
The function $x \mapsto \cosh(x)$ is increasing on $[0, \infty)$, so
\begin{equation*}
\E[\exp(t \ep X)\mid X] = \cosh(|tX|) \leq \cosh(|tY|) = \E[\exp(t \ep Y) \mid Y]
\end{equation*}
almost surely. The claim follows.
\end{proof}

\begin{appxlem}\label{lem:bdHessian}
Let $H(\zeta)$ and $H_n(\zeta)$ be the Hessians of $D(\phi)$ and $D_n(\phi)$, respectively, evaluated at $\phi = \zeta$.
If $\cB_\eps:=\{\phi \in \R^L\,:\, \rho(\phi, \theta) \le \eps\}$, then
\begin{equation*}
\E\sup_{\phi \in \cB_\eps} \|H(\phi) - H_n(\phi)\|_\mathrm{op}^2 \leq C \frac{\log n}{n \sigma^4}\,.
\end{equation*}
\end{appxlem}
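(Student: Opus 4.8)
The plan is to compute the Hessian $\nabla^2_\phi\log f_\phi(y)$ in closed form, to observe that $H(\phi)-H_n(\phi)$ is then a centred empirical average of bounded positive semidefinite random matrices, and to control it uniformly over $\cB_\eps$ by a sub-exponential matrix Bernstein inequality combined with an $\eps$-net argument. For the Hessian I would start from the identity $f_\phi(y)=\sigma^{-L}\sg(\sigma^{-1}y)\,e^{-\|\phi\|^2/(2\sigma^2)}\,\E_G e^{y^\top G\phi/\sigma^2}$ from the proof of Lemma~\ref{lem:first_moment}, so that $\log f_\phi(y)=c(y)-\tfrac{\|\phi\|^2}{2\sigma^2}+\log\E_G e^{(G^\top y)^\top\phi/\sigma^2}$ with $c(y)$ independent of $\phi$. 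The last summand is the cumulant generating function of $G^\top y$ evaluated at $\phi/\sigma^2$, so differentiating twice in $\phi$ yields
\begin{equation*}
\nabla^2_\phi\log f_\phi(y)=-\sigma^{-2}I+\sigma^{-4}\,W(y,\phi),\qquad W(y,\phi):=\mathrm{Cov}_{\tilde\mu_{y,\phi}}\!\big(G^\top y\big),
\end{equation*}
where $\tilde\mu_{y,\phi}$ is the tilted measure on $\cG$ with $d\tilde\mu_{y,\phi}(G)\propto e^{y^\top G\phi/\sigma^2}\,d\mu(G)$. Since $D(\phi)=D(\theta\parallel\phi)=\mathrm{const}-\E_\theta\log f_\phi(Y)$, we have $H(\phi)=-\E_\theta\nabla^2_\phi\log f_\phi(Y)$ and $H_n(\phi)=-\tfrac1n\sum_i\nabla^2_\phi\log f_\phi(Y_i)$, so the deterministic $-\sigma^{-2}I$ terms cancel and $H(\phi)-H_n(\phi)=\sigma^{-4}\big(\tfrac1n\sum_i W(Y_i,\phi)-\E_\theta W(Y,\phi)\big)$. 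Because $G$ is orthogonal, $0\preceq W(y,\phi)\preceq\|y\|^2 I$ for every $\phi$, and from $\|Y\|^2\le 2\|\theta\|^2+2\sigma^2\|\xi\|^2$ with $\xi\sim\cN(0,I_L)$ one gets $\E_\theta\|Y\|^4\le C\sigma^4$ and that $\|Y\|^2$ is sub-exponential of order $\sigma^2$.

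Next I would fix $\phi$ and apply a sub-exponential matrix Bernstein inequality to the i.i.d. mean-zero matrices $Z_i(\phi):=W(Y_i,\phi)-\E_\theta W(Y,\phi)$, which obey $\|Z_i(\phi)\|_\mathrm{op}\le\|Y_i\|^2+C\sigma^2$ and $\|\E Z_i(\phi)^2\|_\mathrm{op}\le\E\|Y\|^4\le C\sigma^4$; this gives $\p\big(\|\tfrac1n\sum_i Z_i(\phi)\|_\mathrm{op}\ge t\big)\le 2L\exp\!\big(-cn\min(t^2/\sigma^4,\,t/\sigma^2)\big)$ for all $t\ge0$. To pass to a supremum over $\cB_\eps$ I would use that $\phi\mapsto W(y,\phi)$ is Lipschitz in operator norm with constant of order $\sigma^{-2}\|y\|^3$ on scales $\|\phi-\phi'\|\,\|y\|\le\sigma^2$ — obtained by writing $d\tilde\mu_{y,\phi}/d\tilde\mu_{y,\phi'}\propto e^{y^\top G(\phi-\phi')/\sigma^2}$ and bounding the exponent by $\|y\|\,\|\phi-\phi'\|/\sigma^2$ — so that on the event $\{\max_i\|Y_i\|^2\le C\sigma^2\log n\}$, of probability at least $1-n^{-2}$, the process $\phi\mapsto\tfrac1n\sum_i Z_i(\phi)$ is Lipschitz with constant of order $\sigma(\log n)^{3/2}$. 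I would then cover $\cB_\eps$ — a ball in the fixed ambient dimension $L$ — by a net of mesh $n^{-1}$ and cardinality polynomial in $n$, take a union bound of the Bernstein estimate over the net, transfer it to all of $\cB_\eps$ via the Lipschitz bound, and absorb the complementary event, arriving at $\p\big(\sup_{\phi\in\cB_\eps}\|\tfrac1n\sum_i Z_i(\phi)\|_\mathrm{op}\ge t\big)\le C n^{O(1)}\exp\!\big(-cn\min(t^2/\sigma^4,\,t/\sigma^2)\big)+n^{-2}$ for $t$ bounded below by a constant multiple of $n^{-1}$. Integrating this tail (it is dominated by its Gaussian branch once $t$ exceeds a constant multiple of $\sigma^2\sqrt{(\log n)/n}$) gives $\E\sup_{\phi\in\cB_\eps}\|\tfrac1n\sum_i Z_i(\phi)\|_\mathrm{op}^2\le C\sigma^4(\log n)/n$, and dividing by $\sigma^8$ yields the claim.

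The hard part is the uniformization over $\phi$: at a single $\phi$ the bound is a one-line consequence of matrix Bernstein, but the supremum forces one to (i) differentiate the tilted cumulant generating function to read off the $\sigma^{-2}\|y\|^3$ Lipschitz modulus of $\phi\mapsto W(y,\phi)$, (ii) control $\max_i\|Y_i\|$, which is the genuine source of the $\log n$ factor, and (iii) check that the net cardinality contributes only a $\log n$ (and not a larger) factor, using that $L$ is a constant and that the radius $\eps$ is polynomial in $\sigma$. A secondary technical point is the sub-exponential (rather than bounded) tail of $\|Y_i\|^2$, which is what dictates the mixed $\min(t^2/\sigma^4,\,t/\sigma^2)$ form of the Bernstein bound and the truncation at scale $\sigma^2\log n$.
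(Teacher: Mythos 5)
Your proof is correct and follows the same skeleton as the paper's --- write $H_n-H$ as a centred empirical average of per-sample Hessians, control each summand and its Lipschitz modulus in $\phi$, discretize $\cB_\eps$ by a net, and apply matrix concentration --- but the two executions differ in the concentration machinery and in how the Hessian bounds are obtained. The paper first symmetrizes and then applies the matrix Hoeffding/Rademacher-series bound of \cite[Theorem~4.6.1]{Tro15} \emph{conditionally on} $Y_1,\dots,Y_n$, which yields an expectation bound $\E\max_{\cZ}\|\sum_i\eps_iJ_i(\phi)\|_{\mathrm{op}}^2\le C\log(L|\cZ|)\,n\,\E\max_\phi\|J_1(\phi)\|_{\mathrm{op}}^2$ in one step and sidesteps the unboundedness of $\|Y_i\|^2$ entirely (the $\log n$ comes from $\log|\cZ|$ with mesh $\gamma=n^{-1/2}$, and the net error is absorbed via Young's inequality). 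You instead invoke an unconditional sub-exponential matrix Bernstein inequality, union-bound its tail over a mesh-$n^{-1}$ net, truncate on $\{\max_i\|Y_i\|^2\le C\sigma^2\log n\}$ to make the process Lipschitz, and integrate the tail; this is sound but requires the extra truncation step and a crude fourth-moment bound to absorb the bad event, which you only gesture at. Your identification of $\nabla^2_\phi\log f_\phi(y)=-\sigma^{-2}I+\sigma^{-4}\mathrm{Cov}_{\tilde\mu_{y,\phi}}(G^\top y)$ is a nice structural shortcut: positive semidefiniteness and the bound $W\preceq\|y\|^2I$ follow immediately, whereas the paper derives the equivalent operator-norm and Lipschitz bounds ($\|J_1(\phi)\|_{\mathrm{op}}\le\sigma^{-2}+2\sigma^{-4}\|y\|^2$ and modulus $6\sigma^{-6}\|y\|^3$) from the general derivative-tensor estimates of Lemma~\ref{lem:derivative_bound}, which it reuses elsewhere. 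Both routes deliver the stated $C\log n/(n\sigma^4)$ bound.
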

\begin{proof}
The matrix $H_n(\phi)$ can be written as a sum of independent random matrices:
\begin{equation*}
H_n(\phi) = \frac 1 n \sum_{i=1}^n J_i(\phi)\,,\qquad J_i(\phi)=\nabla_\phi^2 \log \frac{f_\theta}{f_\phi}(Y_i)\,.%
\end{equation*}
Using symmetrization, we get
\begin{equation}
\label{EQ:symHess}
\E\sup_{\phi \in \cB_\eps} \|H(\phi) - H_n(\phi)\|_\text{op}^2 \le \frac{4}{n^2} \E\sup_{\phi \in \cB_\eps} \big\|\sum_{i=1}^n \eps_i J_i(\phi)\big\|_\text{op}^2\,,
\end{equation}
where $\eps_1, \ldots, \eps_n$ are i.i.d Rademacher random variables that are independent of $J_1, \ldots, J_n$.

By Lemma~\ref{lem:derivative_bound}, for any $u, \phi, \eta \in \R^L$ such that $\|u\|=1$ and $\phi, \eta \in \cB_\eps$, we have 
$$
|u^\top J_i(\phi) u - u^\top J_i(\eta) u | \le 6 \sigma^{-6}\|Y_i\|^3\|\phi-\eta\| \leq C\frac{1+|\xi_i|^3}{\sigma^3}\|\phi-\eta\|
$$
where $\xi_i$ is Gaussian noise.

Fix $\gamma \in (0,\eps)$ and let $\cZ$ be a $\gamma$-net of $\cB_\eps$.
In other words, we require that
\begin{equation*}
\max_{\phi \in \cB_\eps} \min_{\eta \in \cZ} \|\eta - \phi\| \le \gamma\,.
\end{equation*}
We can always choose $\cZ$ to have cardinality $|\cZ|\le  (C/\gamma)^L$ for some universal constant $C>0$.
Then, by Young's inequality, we get
\begin{equation}
\label{EQ:supnet1}
\sup_{\phi \in \cB_\eps} \big\|\sum_{i=1}^n \eps_i J_i(\phi)\big\|_\text{op}^2 \le  C\frac{\gamma^2}{\sigma^{6}}\big(\sum_{i=1}^n 1+|\xi_i|^3\big)^2 + C \max_{\phi \in \cZ} \big\|\sum_{i=1}^n \eps_i J_i(\phi)\big\|_\text{op}^2
\end{equation}
The expectation of the first term is controlled using the fact that
\begin{equation}
\label{EQ:supnet2}
\E\big(\sum_{i=1}^n 1+ |\xi_i|^3\big)^2=\sum_{i,j=1}^n \E[(1+|\xi_i|^3)(1+|\xi_j|^3)]\le C n^2\,.
\end{equation}
For second term, we employ a standard matrix concentration bound~\cite[Theorem~4.6.1]{Tro15} to get that
$$
\p\Big[\max_{\phi \in \cZ} \big\|\sum_{i=1}^n \eps_i J_i(\phi)\big\|_\text{op}^2 \geq t \Big| Y_1, \dots, Y_n \Big] \leq 2 L |\cZ| \exp\left(-\frac{t}{2 \max_{\phi \in \cZ} \|\sum_{i=1}^nJ_i(\phi)^2\|_\text{op}}\right)\,,
$$
Integrating this tail bound yields
\begin{align*}
\E\big[\max_{\phi \in \cZ} \big\|\sum_{i=1}^n \eps_i J_i(\phi)\big\|_\text{op}^2\big] &\le C \log (L |\cZ|) \E\big[\max_{\phi \in \cZ}\big\|\sum_{i=1}^n J_i(\phi)^2\big\|_\text{op}\big] \\
&\le   C \log (L |\cZ|) n\E\big[\max_{\phi \in \cZ} \|J_1(\phi)^2\|_\text{op}\big] 
\end{align*}
By Lemma~\ref{lem:derivative_bound},
$$
\|J_1(\phi)^2\big\|_\text{op} = \|J_1(\phi)\big\|_\text{op}^2 \le 2\sigma^{-4} + 8 \sigma^{-8}\|Y_1\|^4 \leq C \frac{1+ \|\xi_1\|^4}{\sigma^4}\,.
$$
The above two displays yield
$$
\E\big[\max_{\phi \in \cZ} \big\|\sum_{i=1}^n \eps_i J_i(\phi)\big\|_\text{op}^2\big] \le C\frac{\log (1/\gamma)}{\sigma^4} n 
$$

Combining the last display with~\eqref{EQ:symHess},~\eqref{EQ:supnet1}, and~\eqref{EQ:supnet2}, we get
\begin{equation}
\E\sup_{\phi \in \cB_\eps} \|H(\phi) - H_n(\phi)\|_\text{op}^2 \le C\Big(\frac{\gamma^2}{\sigma^6} + \frac{\log(\eps/\gamma)}{n\sigma^4} \Big)\le C\frac{\log n}{n\sigma^4}\,,
\end{equation}
for $\gamma=n^{-1/2}$.
\end{proof}
\begin{appxlem}
\label{eventA}
Assume the conditions of Theorem~\ref{thm:non_asymptotic} hold. Then the MLE $\tilde \theta_n$ satisfies
\begin{equation*}
\E[\metric(\tilde \theta, \theta)^2] \leq C\frac{\sigma^{4k-2}}{n}\,.
\end{equation*}
\end{appxlem}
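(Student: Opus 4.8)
The plan is to run a standard peeling (slicing) argument over dyadic shells centred at $\theta$, combining the curvature hypothesis \eqref{EQ:universal_curvature} with a uniform deviation bound for the centred log-likelihood-ratio process
\begin{equation*}
\mathfrak G_n(\phi):=D(\theta\parallel\phi)-D_n(\theta\parallel\phi)=\frac1n\sum_{i=1}^n\Big(\E\log\tfrac{f_\theta}{f_\phi}(Y)-\log\tfrac{f_\theta}{f_\phi}(Y_i)\Big).
\end{equation*}
As in the proof of Theorem~\ref{thm:non_asymptotic} one rescales so that $\|\theta\|=1$ and $\sigma\ge1$, and uses $\cG$-invariance to assume $\metric(\tilde\theta,\theta)=\|\tilde\theta-\theta\|$. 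The only use made of the defining property of the MLE is that $D_n(\theta\parallel\tilde\theta)\le D_n(\theta\parallel\theta)=0$, whence $D(\theta\parallel\tilde\theta)\le\mathfrak G_n(\tilde\theta)$; we also record the crucial fact that $\mathfrak G_n(\theta)=0$ identically.

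Set the base radius $R:=\sigma^{2k-1}/\sqrt n$ and, for $j\ge0$, let $T_j:=\{\phi\in\cT:\ 2^jR\le\metric(\phi,\theta)\le 2^{j+1}R\}$. On $\{\metric(\tilde\theta,\theta)\le R\}$ one bounds $\metric^2(\tilde\theta,\theta)\le R^2=\sigma^{4k-2}/n$, while on $\{\tilde\theta\in T_j\}$ the display above together with \eqref{EQ:universal_curvature} gives $\sup_{\phi\in T_j}\mathfrak G_n(\phi)\ge D(\theta\parallel\tilde\theta)\ge C\sigma^{-2k}(2^jR)^2=:t_j$. Hence
\begin{equation*}
\E[\metric^2(\tilde\theta,\theta)]\le R^2+\sum_{j\ge0}(2^{j+1}R)^2\,\p\Big[\sup_{\phi\in T_j}\mathfrak G_n(\phi)\ge t_j\Big],
\end{equation*}
and it remains to show that the series is $O(R^2)$, with implicit constant allowed to depend on $L$.

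The key step is the deviation bound. The increments $\log\tfrac{f_\theta}{f_\phi}(Y_i)$ are Lipschitz functions of the Gaussian noise $\xi_i$ --- this follows from the log-sum-exp representation of $f_\phi$ and Lemma~\ref{lem:logsumexp_lipschitz} --- so, after symmetrization and applying Lemmas~\ref{lem:symmetrized_subgaussian} and~\ref{lem:dominance}, the process $\{\mathfrak G_n(\phi):\phi\in T_j\}$ is sub-Gaussian with respect to a multiple of the Euclidean metric. Moreover a mean-value bound, $\|\nabla_\phi\log f_\phi(y)\|\lesssim(\|\phi\|+\|y\|)/\sigma^2$, and $\E_{Y\sim P_\theta}\|Y\|^2\lesssim L\sigma^2$ show that for $\phi\in T_j$ the variance --- and likewise the sub-Gaussian proxy --- of $\log\tfrac{f_\theta}{f_\phi}(Y)$ is at most $v_j\lesssim L\sigma^{-2}(2^jR)^2$ when $2^jR\le1$ and $\lesssim\sigma^{-2}(2^jR)^2$ in general; the point is that $v_j$ genuinely vanishes like $(2^jR)^2$ as $\phi\to\theta$. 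Since $\mathfrak G_n(\theta)=0$, Dudley's entropy integral over the Euclidean ball $T_j\subset\R^L$ yields $\E\sup_{\phi\in T_j}\mathfrak G_n(\phi)\lesssim\sqrt L\,\sqrt{v_j/n}$ with \emph{no} logarithmic factor (this is where a chaining, rather than a single-net, argument is needed), and the Borell--TIS inequality gives
\begin{equation*}
\p\Big[\sup_{\phi\in T_j}\mathfrak G_n(\phi)\ge\E\sup_{\phi\in T_j}\mathfrak G_n(\phi)+u\Big]\le\exp\!\big(-c\,nu^2/v_j\big).
\end{equation*}
Plugging in $t_j=C\sigma^{-2k}(2^jR)^2$ and $R=\sigma^{2k-1}/\sqrt n$, one checks that $\E\sup_{T_j}\mathfrak G_n\le t_j/2$ as soon as $2^j\gtrsim L$, i.e. for $j\ge j_0:=\log_2(CL)$, and that then $n t_j^2/v_j\gtrsim\sigma^2 4^j/L\ge 4^j/L$. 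Consequently
\begin{equation*}
\sum_{j\ge j_0}(2^{j+1}R)^2\,\p\Big[\sup_{T_j}\mathfrak G_n\ge t_j\Big]\lesssim R^2\sum_{j\ge0}4^{j}e^{-c4^j/L}\lesssim L\,R^2,
\end{equation*}
while the finitely many inner shells are handled by the trivial bound, contributing $\sum_{j<j_0}(2^{j+1}R)^2\lesssim R^2 4^{j_0}\lesssim L^2 R^2$. Combining, $\E[\metric^2(\tilde\theta,\theta)]\lesssim L^2 R^2=C\sigma^{4k-2}/n$.

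The main obstacle is the deviation bound of the previous paragraph, and within it the two quantitative points that make the inner shells harmless: that $\E\sup_{T_j}\mathfrak G_n$ carries no logarithmic factor (hence the reliance on $\mathfrak G_n(\theta)=0$ and on chaining), and that the variance proxy $v_j$ vanishes like $(2^jR)^2$ as $\phi\to\theta$, so that the threshold $t_j\propto(2^jR)^2$ stays comfortably above the noise floor for every shell of radius $\gtrsim LR$. Everything else is bookkeeping parallel to the slicing estimate used in Section~\ref{sec:upper_proof}.
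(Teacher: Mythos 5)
Your proposal is correct and follows essentially the same route as the paper's proof: dyadic peeling around $\theta$ at base radius $\sigma^{2k-1}/\sqrt n$, the curvature hypothesis plus $D_n(\theta\parallel\tilde\theta)\le 0$ to force $\sup_{T_j}\mathfrak G_n$ above a threshold $\propto\sigma^{-2k}(2^jR)^2$, and sub-Gaussianity of $\mathfrak G_n$ with variance proxy $\lesssim L\|\phi-\theta\|^2/(n\sigma^2)$ (the paper's Lemma~\ref{LEM:subG}) fed into a chaining tail bound (your Dudley--plus--Borell--TIS step is exactly the content of Proposition~\ref{prop:chaining_tail_bound}). The only cosmetic differences are that the paper absorbs the inner shells by taking the constant $C_0$ in $\alpha_j=C_0\sigma^{2k-1}2^j$ large rather than bounding finitely many shells trivially, and your intermediate factor $\sigma^2 4^j/L$ should just read $4^j/L$, which is what you actually use.
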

\begin{proof}
As in the proof of Theorem~\ref{thm:non_asymptotic}, since $\theta$ is fixed, we simply write $D(\phi)=D(\theta \|\phi)$  and define 
\begin{equation*}
D_n(\phi) = \frac 1 n \sum_{i=1}^n \log \frac{f_\theta}{f_\phi}(Y_i)\,,
\end{equation*}
where $Y_i$ are i.i.d from $\dist{P}_\theta$ and we recall that $f_\zeta$ is the density of $\dist{P}_\zeta, \zeta \in \R^d$.

We first establish using Lemma~\ref{LEM:subG} that the process $\{\mathfrak{G}_n(\phi)\}_{\phi \in \R^d}$ defined by $\mathfrak{G}_n(\phi) = D(\phi) - D_n(\phi)$ is a subgaussian process with respect to the Euclidean distance with variance proxy $c\sigma^2/n$ for some constant $c>0$, i.e., that for any $\lambda \in \R$, we have
\begin{equation*}
\E[\exp(\lambda(\mathfrak{G}_n(\phi) - \mathfrak{G}_n(\zeta)))] \leq \exp\big(c\frac{\lambda^2\sigma^2 }{2n} \|\phi - \zeta\|^2\big)\,.
\end{equation*}

We then apply the following standard tail bound.
\begin{appxprop}[{\cite[Theorem 8.1.6]{Ver17}}]\label{prop:chaining_tail_bound}
If $\{X_\phi\}_\phi$ is a (standard) subgaussian process on $\R^d$ with respect to the Euclidean metric and $B_\delta(\theta)$ is a ball of radius $\delta$ around $\theta$, then
\begin{equation*}
\p[\sup_{\phi \in B_\delta(\theta)} (X_\phi - X_\theta) \geq C \delta + x] \leq C e^{-Cx^2/\delta^2}\,.
\end{equation*}
\end{appxprop}

The rescaled process $\sigma\sqrt n \mathfrak{G}_n$ is standard subgaussian  process with respect to the Euclidean metric, so applying Proposition~\ref{prop:chaining_tail_bound} and noting that $\mathfrak{G}_n(\theta) =0$ yields
\begin{equation}\label{eqn:divergence_tail_bound}
\p\left[\sup_{\phi \in B_\delta(\theta)} \mathfrak{G}_n(\phi) \geq C \frac{\delta }{\sigma\sqrt n} + x\right] \leq C \exp\big(- C \frac{n \sigma^2 x^2}{\delta^2}\big)\,.
\end{equation}

For convenience, write $v_n = \sqrt{n}(\tilde \theta- \theta)$, where $\tilde \theta$ is a MLE satisfying $\|\tilde \theta - \theta\| = \metric(\tilde \theta, \theta)$.
We wish to show that $\E[\|v_n\|^2] \leq C \sigma^{4k-2}$.

We employ the so-called \emph{slicing} (a.k.a \emph{peeling}) method. Define the sequence $\{\alpha_j\}_{j \ge 0}$ where $\alpha_0 = 0$ and $\alpha_j=C_0\sigma^{2k-1}2^j$ for $j \geq 1$ for some constant $C_0$ to be specified.
For any $j \ge 0$, define $S_{j} = \{ \phi \in \R^d\,:\, \alpha_j \leq \sqrt n \metric(\phi, \theta) \leq \alpha_{j + 1}\}$.
We obtain
\begin{align}
\E[\|v_n\|^2] & =\sum_{j\ge 0} \E[\|v_n\|^2 \mid \tilde \theta\in S_j] \p[\tilde \theta\in S_{j}]\nonumber\\
 & \le C_0 \sigma^{4k-2} + \sum_{j \ge 1} \alpha_{j+1}^2 \p[\tilde \theta\in S_{j}] \label{EQ:prvn1}\,.
\end{align}

We now show that if $\tilde \theta\in S_{j}, j \ge 1$, then $\mathfrak{G}_n(\tilde \theta)=D(\tilde \theta) - D_n(\tilde \theta)$ is large. To that end, observe that on the one hand, by definition of the MLE, we have $D_n(\tilde \theta)\le D_n(\theta) = 0$. 
On the other hand, $D(\tilde \theta) \geq C \sigma^{-2k} \metric(\tilde \theta, \theta)^2$.
Hence, if $\tilde \theta\in S_{j}$, then $\mathfrak{G}_n(\tilde \theta) \geq C \sigma^{-2k} \metric(\tilde \theta, \theta)^2 \geq C \sigma^{-2k} \alpha_j^2/n$.
It yields
\begin{align*}
\p[\tilde \theta\in S_{j}]& \leq \p\big[\sup_{\phi \in S_{j}} \mathfrak{G}_n(\phi) \geq C\sigma^{-2k} \frac{\alpha_j^2}{n}\big] 
\le\p\big[\sup_{\phi\in B_{\frac{\alpha_{j+1}}{\sqrt n}} (\theta)}\mathfrak{G}_n(\phi) \geq C\sigma^{-2k} \frac{\alpha_j^2}{n}\big]\,.
\end{align*}

Recall $\alpha_j=C_0\sigma^{2k-1}2^j$ so that $\sigma^{-2k}\alpha_j^2 \ge C \alpha_{j+1}/\sigma$ as long as $C_0$ is sufficiently large.
Apply~\eqref{eqn:divergence_tail_bound} with $\delta=\alpha_{j+1}/\sqrt{n}$ and $x=C \sigma^{-2k} \alpha_j^2/n$ to get
\begin{align*}
\p\big[\sup_{\phi\in B_{\frac{\alpha_{j+1}}{\sqrt n}} (\theta)}\mathfrak{G}_n(\phi) \geq C\sigma^{-2k} \frac{\alpha_j^2}{n}\big]&\le C \exp\big(-C\frac{\alpha_{j}^4}{\alpha_{j+1}^2\sigma^{4k-2}}  \big)\\
&\le C \exp\big(-C2^{2j}  \big)\,.
\end{align*}
Together with~\eqref{EQ:prvn1}, we obtain
\begin{equation}
\label{EQ:prvn2}
\E[\|v_n\|^2] \le C_0 \sigma^{4k -2}\sum_{j\ge 0} 2^{2j} \exp\big(-C2^{2j}  \big)\le C \sigma^{4k-2}\,.
\end{equation}
\end{proof}

\begin{appxlem}
\label{LEM:subG}
The process $\{\mathfrak{G}_n(\phi)\}_{\phi \in \R^d}$ defined by $\mathfrak{G}_n(\phi) = D(\phi) - D_n(\phi)$ is a subgaussian process with respect to the $\ell_2$ distance on $\R^L$ with variance proxy $20 L/(n\sigma^2)$, i.e., for any $\lambda \in \R$, we have
\begin{equation*}
\E[\exp(\lambda(\mathfrak{G}_n(\phi) - \mathfrak{G}_n(\zeta)))] \leq \exp\big(\lambda^2 \frac{10 L}{n\sigma^2 } \|\phi - \zeta\|^2\big)\,.
\end{equation*}
\end{appxlem}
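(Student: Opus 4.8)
The plan is to exhibit $\mathfrak{G}_n(\phi)-\mathfrak{G}_n(\zeta)$ as a centered empirical mean of i.i.d.\ terms, each of which is pointwise dominated by a subgaussian random variable, and then invoke the symmetrization estimates of Lemmas~\ref{lem:dominance} and~\ref{lem:symmetrized_subgaussian}. Recall from the proof of Lemma~\ref{lem:first_moment} that $f_\eta(y)=\sigma^{-L}\sg(\sigma^{-1}y)e^{-\|\eta\|^2/2\sigma^2}\,\E\,e^{y^\top G\eta/\sigma^2}$, so that the only part of $\log f_\eta(y)$ depending on $\eta$ is the deterministic term $-\|\eta\|^2/2\sigma^2$ together with $g_\eta(y):=\log\E\,e^{y^\top G\eta/\sigma^2}$. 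Writing $W(y)=g_\zeta(y)-g_\phi(y)$ and $W_i=W(Y_i)$, the $\eta$-independent and the deterministic parts cancel in $D(\phi)-D(\zeta)$ and in $D_n(\phi)-D_n(\zeta)$, leaving $\mathfrak{G}_n(\phi)-\mathfrak{G}_n(\zeta)=\E_\theta[W(Y)]-\tfrac1n\sum_{i=1}^n W_i=-\tfrac1n\sum_{i=1}^n(W_i-\E W_i)$, an average of $n$ i.i.d.\ centered random variables. By the product formula for moment generating functions it therefore suffices to prove that $W_1-\E W_1$ is subgaussian with variance proxy $20L\sigma^{-2}\|\phi-\zeta\|^2$; averaging over $i$ then divides the proxy by $n$, which is the claim.

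To control $W_1-\E W_1$, note first that by Lemma~\ref{lem:logsumexp_lipschitz} (applied with $u=Y_1/\sigma^2$) the map $\eta\mapsto g_\eta(Y_1)$ is $\|Y_1\|/\sigma^2$-Lipschitz, so that $|W_1|\le\sigma^{-2}\|Y_1\|\,\|\phi-\zeta\|$ pointwise. Since this ``Lipschitz constant'' $\|Y_1\|/\sigma^2$ is itself random, I would symmetrize: with $W_1'$ an independent copy of $W_1$ and $\epsilon$ an independent Rademacher sign, Jensen's inequality gives $\E e^{\lambda(W_1-\E W_1)}\le\E e^{\lambda(W_1-W_1')}=\E e^{\lambda\epsilon(W_1-W_1')}$, and because $|W_1-W_1'|\le T:=\sigma^{-2}(\|Y_1\|+\|Y_1'\|)\|\phi-\zeta\|$ almost surely, Lemma~\ref{lem:dominance} yields $\E e^{\lambda(W_1-\E W_1)}\le\E e^{\lambda\epsilon T}$.

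It remains to bound the moment generating function of $\epsilon T$. Here the key point is that $\|Y_i\|=\|G_i\theta+\sigma\xi_i\|$ is, conditionally on $G_i$, a $\sigma$-Lipschitz function of the Gaussian vector $\xi_i$ with conditional mean $\mu:=\E\|Y_i\|\le(\|\theta\|^2+L\sigma^2)^{1/2}$ that does not depend on $G_i$; Gaussian concentration therefore shows $\|Y_i\|-\mu$ is subgaussian with variance proxy $\sigma^2$, hence unconditionally subgaussian with the same proxy. Consequently $T$ is subgaussian with mean $\bar T=2\sigma^{-2}\mu\|\phi-\zeta\|$ and variance proxy $2\sigma^{-2}\|\phi-\zeta\|^2$, so Lemma~\ref{lem:symmetrized_subgaussian} shows $\epsilon T$ is subgaussian with variance proxy $2\sigma^{-2}\|\phi-\zeta\|^2+\bar T^2$. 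Using the standing assumptions $\|\theta\|\le\sigma$ and $L\ge1$ one checks $2\sigma^{-2}\|\phi-\zeta\|^2+\bar T^2\le 20L\sigma^{-2}\|\phi-\zeta\|^2$, which is exactly the proxy we wanted for $W_1-\E W_1$; combining with the averaging step completes the proof.

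The main obstacle is precisely the passage from the pointwise bound $|W_1|\le\sigma^{-2}\|Y_1\|\,\|\phi-\zeta\|$ — whose right-hand side fluctuates like a subgaussian random variable rather than being uniformly bounded — to a genuine subgaussian tail for the \emph{centered} variable $W_1-\E W_1$. This is what the symmetrization step together with Lemmas~\ref{lem:dominance} and~\ref{lem:symmetrized_subgaussian} is designed to resolve; the only subtlety beyond bookkeeping is that one must pass to the symmetrized difference $W_1-W_1'$ (rather than working with $W_1-\E W_1$ directly) in order to invoke the domination lemma, and then track the mean $\bar T$ of the dominating variable, which contributes the term of order $L\sigma^{-2}\|\phi-\zeta\|^2$ responsible for the dimension factor $L$ in the final variance proxy.
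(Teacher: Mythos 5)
Your proposal is correct and follows essentially the same route as the paper: the same reduction of $\mathfrak{G}_n(\phi)-\mathfrak{G}_n(\zeta)$ to a centered i.i.d.\ average of $\Delta(Y_i)=g_\zeta(Y_i)-g_\phi(Y_i)$, the same Lipschitz bound from Lemma~\ref{lem:logsumexp_lipschitz}, and the same combination of symmetrization, Gaussian concentration for $\|Y_i\|$, and Lemmas~\ref{lem:dominance} and~\ref{lem:symmetrized_subgaussian} to absorb the random Lipschitz constant and its nonzero mean. The only cosmetic difference is that you symmetrize each summand against an independent copy (dominating by $\|Y_1\|+\|Y_1'\|$) while the paper symmetrizes the sum and carries a factor $2\lambda/n$ into each factor; both yield the stated variance proxy.
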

\begin{proof}
By definition of $\mathfrak{G}_n$ and the densities $f_\zeta$ and $f_\phi$, we have
\begin{align*}
\mathfrak{G}_n(\phi) - &\mathfrak{G}_n(\zeta)  =  D(\phi) - D(\zeta) - D_n(\phi) + D_n(\zeta) \\
& = \E[\log f_\zeta(Y) - \log f_\phi(Y)] - \frac 1 n \sum_{i=1}^n \log f_\zeta(Y_i) - \log f_\phi(Y_i) \\
& =  \E\Big[\log \frac{\E [\exp(-\frac{ \|Y - G \zeta\|^2}{2\sigma^2})\mid Y]}{\E [\exp(-\frac{ \|Y - G \phi\|^2}{2\sigma^2}) \mid Y]}\Big]  -  \frac 1 n \sum_{i=1}^n \log \frac{\E [\exp(-\frac{ \|Y_i - G \zeta\|^2}{2\sigma^2})\mid Y_i]}{\E [\exp(-\frac{ \|Y_i - G \phi\|^2}{2\sigma^2}) \mid Y_i]} \\
&=\E\Big[\log \frac{\E [ \exp({Y^\top G \zeta}{/\sigma^2}) \mid Y]}{\E [ \exp({Y^\top G \phi}{/\sigma^2}) \mid Y]}\Big] - \frac 1 n \sum_{i=1}^n \log \frac{\E [\exp({Y_i^\top G \zeta}{/\sigma^2}) \mid Y_i]}{\E [ \exp({Y_i^\top G \phi}{/\sigma^2}) \mid Y_i]} \\
&= \E[\Delta(Y)] - \frac 1 n \sum_{i=1}^n \Delta(Y_i)\,,
\end{align*}
where
\begin{equation*}
\Delta(y) = \log \frac{\E \exp({y^\top G \zeta}{/\sigma^2})}{\E \exp({y^\top G \phi}{/\sigma^2})}\,.
\end{equation*}
Next, using a standard symmetrization argument, we get
\begin{equation}
\label{EQ:symm}
\E[\exp(\lambda(\mathfrak G_n(\phi) - \mathfrak G_n(\zeta)))] \le \prod_{i=1}^n\E[\exp(\frac{2\lambda}{n} \ep_i \Delta(Y_i))]\,,
\end{equation}
where  $\eps_1, \ldots, \eps_n$ are i.i.d Rademacher random variables that are independent of $\Delta(Y_1), \ldots, \Delta(Y_n)$.

Next, Lemma~\ref{lem:logsumexp_lipschitz} implies $\zeta \mapsto  \log \E \exp( Y_i^\top G \zeta/\sigma^2)$ is $\|Y_i\|/\sigma^2$-Lipschitz with respect to the Euclidean distance. Hence $|\Delta(Y_i)| \leq \|Y_i\|\|\phi - \zeta\|/\sigma^2 \leq (\|\theta\| + \sigma \|\xi_i\|)\|\phi - \zeta\|/\sigma^2$.
The function $\xi \mapsto (\|\theta\| + \sigma \|\xi\|)$ is $\sigma$-Lipschitz, so Gaussian concentration~\cite[Theorem~5.5]{BouLugMas13} implies that $(\|\theta\| + \sigma \|\xi_i\|)$ is subgaussian with variance proxy $\sigma^2$.
We also have
\begin{equation*}
\E (\|\theta\| + \sigma \|\xi_i\|) \leq \sigma (1 + \sqrt L) \leq 2 \sqrt L \sigma.
\end{equation*}
Applying Lemma~\ref{lem:symmetrized_subgaussian} yields that $\ep_i (\|\theta\| + \sigma \|\xi_i\|)$ is subgaussian with variance proxy $ 5 L \sigma^2$. Combining this fact with Lemma~\ref{lem:dominance}, we obtain
\begin{align*}
\E\big[\exp(\frac{2\lambda}{n} \ep_i \Delta(Y_i))\big] & \leq \E\big[\exp(\frac{2\lambda\|\phi - \zeta\|}{\sigma^2n} \ep_i (\|\theta\| + \sigma \|\xi_i\|))\big] \\
& \leq \exp(\frac{10 L \lambda^2 \|\phi - \zeta\|^2}{\sigma^2 n^2})
\end{align*}
for all $\lambda \in \R$. Together with~\eqref{EQ:symm}, this yields the desired result.

\end{proof}

\begin{appxlem}\label{lem:projection_construction}
Fix $\theta \in \cT$, and assume that $\sigma \geq \|\theta\| \vee 1$.
For any $j =- \lfloor L/2\rfloor, \ldots, \lfloor L/2\rfloor $, define,
$$
M_j = \frac 1 n \sum_{i=1}^n |\widehat{(Y_i)}_j|^2 - \sigma^2\,.
$$
Define the set $\tilde S$ by
$$
\tilde S=\Big\{ j \in \{- \lfloor L/2\rfloor, \ldots, \lfloor L/2\rfloor\}\,:\,  M_j \geq \frac 12 c_0^2  \Big\}\,,
$$
where $c_0$ is a lower bound on the magnitude of nonzero Fourier coefficients of elements of $\cT$. (See Assumption~\ref{assumption}.)
Then there exists a constant $c$ depending on $c_0$ such that
\begin{equation*}
\p[\tilde S \neq \supp(\hat \theta)] \leq 2 L\exp(-c n \sigma^{-4})\,.
\end{equation*}
\end{appxlem}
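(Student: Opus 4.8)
The plan is to show that $\tilde S$ coincides with $\supp(\hat\theta)$ on a high-probability event on which every $M_j$ is close to its mean, and then to control that event by a Bernstein-type concentration bound for averages of independent sub-exponential random variables. The first step is to compute $\E[M_j]$. Writing $\widehat{(Y_i)}_j = z_i^{\,j}\hat\theta_j + \sigma\hat\xi_{i,j}$, where $z_i\in U(1)$ encodes the random shift $G_i$ and $\hat\xi_i$ is the discrete Fourier transform of the standard Gaussian vector $\xi_i$, unitarity of the DFT gives $\E\hat\xi_{i,j}=0$ and $\E|\hat\xi_{i,j}|^2=1$; since the shift is independent of the noise, the cross term vanishes in expectation, so $\E|\widehat{(Y_i)}_j|^2 = |\hat\theta_j|^2 + \sigma^2$ and hence $\E[M_j] = |\hat\theta_j|^2$. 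By Assumption~\ref{assumption}, $\E[M_j]\ge c_0^2$ whenever $j\in\supp(\hat\theta)$, while $\E[M_j]=0$ otherwise. Consequently, on the event $\cE:=\{\max_j|M_j-\E M_j|<\tfrac12 c_0^2\}$ one has $M_j\ge\tfrac12 c_0^2$ exactly for $j\in\supp(\hat\theta)$, i.e. $\tilde S=\supp(\hat\theta)$. It therefore suffices to bound $\p[\cE^c]$, which by a union bound over the $O(L)$ indices $j$ reduces to estimating $\p[|M_j-\E M_j|\ge\tfrac12 c_0^2]$ for fixed $j$.

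For a fixed $j$, I would write $M_j-\E M_j = \frac1n\sum_{i=1}^n\big(|\widehat{(Y_i)}_j|^2 - \E|\widehat{(Y_i)}_j|^2\big)$ as an average of i.i.d. centered variables, and expand
\begin{equation*}
|\widehat{(Y_i)}_j|^2 = |\hat\theta_j|^2 + 2\sigma\,\Re\!\big(z_i^{-j}\,\overline{\hat\theta_j}\,\hat\xi_{i,j}\big) + \sigma^2|\hat\xi_{i,j}|^2\,.
\end{equation*}
Conditionally on $z_i$, the middle term is a centered Gaussian of variance $O(\sigma^2|\hat\theta_j|^2) = O(\sigma^2)$, using $|\hat\theta_j|\le\|\theta\|\le c$ from Assumption~\ref{assumption:norm}; the last term is $\sigma^2$ times a $\chi^2$-type variable, hence sub-exponential with parameter $O(\sigma^2)$. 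Thus each summand is sub-exponential with parameters $(\nu^2,b)$ with $\nu^2,b\lesssim\sigma^2$, and Bernstein's inequality gives
\begin{equation*}
\p\Big[|M_j-\E M_j|\ge\tfrac12 c_0^2\Big] \le 2\exp\!\Big(-c\,n\min\Big(\tfrac{c_0^4}{\sigma^4},\tfrac{c_0^2}{\sigma^2}\Big)\Big)\,.
\end{equation*}
Since $\sigma\ge1$ (indeed $\sigma\ge\|\theta\|\vee1$), the minimum equals $c_0^4/\sigma^4$, so this is at most $2\exp(-c'n\sigma^{-4})$ with $c'$ depending only on $c_0$. A union bound over the $O(L)$ choices of $j$ then yields $\p[\tilde S\neq\supp(\hat\theta)]\le 2L\exp(-c'n\sigma^{-4})$, as claimed.

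The main obstacle is essentially bookkeeping: verifying that the sub-exponential parameters of $|\widehat{(Y_i)}_j|^2$ scale like $\sigma^2$ uniformly in the unknown shift $z_i$ and in $j$, and that the constant appearing in the exponent depends only on $c_0$, with the dimension $L$ entering solely through the union-bound prefactor. There is a minor case distinction — the coefficient $\hat\xi_{i,0}$ is real, whereas $\hat\xi_{i,j}$ for $j\neq0$ carries two real Gaussian degrees of freedom — but this affects only the numerical constants and not the form of the bound.
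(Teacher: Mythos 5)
Your proposal is correct and follows essentially the same route as the paper: compute $\E[M_j]=|\hat\theta_j|^2$, show each summand $|\widehat{(Y_i)}_j|^2$ is sub-exponential with parameter $O(\sigma^2)$, apply Bernstein at threshold $c_0^2/2$, and finish with a union bound over the $L$ frequencies. The only (immaterial) difference is how sub-exponentiality is obtained — you decompose into the cross term plus a $\chi^2$-type term, while the paper bounds $|\widehat{(Y_i)}_j|\le\sigma(1+|\hat\xi_{i,j}|)$ and invokes Gaussian concentration for Lipschitz functions.
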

\begin{proof}
It is straightforward to check that $\E[M_j] = |\hat \theta_k|^2$.
We now establish that the random variable $|\widehat{(Y_i)}_j|^2$ is $O(\sigma^2)$-subexponential, i.e., there exists a positive constant $c_1$ such that
\begin{equation*}
\E \exp(t (|\widehat{(Y_i)}_j|^2 - \E|\widehat{(Y_i)}_j|^2)) \leq \exp(c_1^2 \sigma^4 t^2) \quad \quad \forall |t| \leq \frac{1}{c_1\sigma^2}\,.
\end{equation*}

This follows from the following considerations. It is clear that $|\widehat{(Y_i)}_j| \leq |\widehat{G_i\theta}_j| + \sigma|\widehat{\xi_i}_j| \leq \sigma(1 + |\widehat{\xi_i}_j|)$. Since $\xi \mapsto \sigma(1 + |\widehat{\xi_i}_j|)$ is $\sigma$-Lipschitz, Gaussian concentration~\cite[Theorem~5.5]{BouLugMas13} implies that $\sigma(1 + |\widehat{\xi_i}_j|)$ is subgaussian with variance proxy $\sigma^2$. Since
\begin{equation*}
\E[\sigma(1 + |\widehat{\xi_i}_j|)] \leq  C\sigma\,,
\end{equation*}
we obtain that there exists a constant $c_2$ such that
\begin{equation*}
\E[\exp(t|\widehat{(Y_i)}_j|)] \leq \E[\exp(t(\sigma(1 + |\widehat{\xi_i}_j|)))] \leq \exp(c_2 t^2  \sigma^2)\,.
\end{equation*}
That $|\widehat{(Y_i)}_j|^2$ is $O(\sigma^2)$-subexponential then follows from~\cite[Section~2.7]{Ver17}.
In particular, this implies that $M_1$ has variance at most $C \sigma^4/n$ for some constant $C$ and that $M_1$ satisfies a tail bound of the form
\begin{equation*}
\p[|M_1 - \E M_1| \geq t] \leq 2 \exp(- c (t \wedge t^2) n \sigma^{-4})\,,
\end{equation*}
for some positive constant $c$.

If $j \in \supp(\hat \theta)$, then $|\hat \theta_j| \geq c_0$ by assumption, so
\begin{equation*}
\p[j \notin \tilde S] = \p\big[M_j \leq \frac 12 c_0^2\big] \leq \p\big[|M_j - \E M_j| \geq \frac 12 c_0^2\big] \leq 2 \exp(-c n \sigma^{-4})
\end{equation*}
for some constant $c$ depending on $c_0$. Likewise,
if $j \notin \supp(\hat \theta)$, then $\E[M_j] = 0$ and
\begin{equation*}
\p[j \in \tilde S] \leq \p[M_j \geq \frac 12 c_0^2] \le 2 \exp(-c n \sigma^{-4})\,.
\end{equation*}

The proof follows using a union bound.
\end{proof}

\begin{appxlem}\label{lem:testing_bound}
Let $\dist{P}_0$ and $\dist{P}_1$ be any two distributions on a space $\cX$.
If there exists a measurable function $T: \cX \to \R$ such that $(\E_0[T(X)] - \E_1[T(X)])^2 = \mu^2$ and $\max\{\var_1(T(X)), \var_0(T(X))\} \leq \sigma^2$,
then
\begin{equation*}
D(\dist{P}_0 \| \dist{P}_1) \geq \frac{\mu^2}{4 \sigma^2 + \mu^2}\,.
\end{equation*}
\end{appxlem}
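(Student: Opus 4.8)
The plan is to work with the likelihood ratio $L = d\dist{P}_0/d\dist{P}_1$ and to exploit the strong convexity of $x\mapsto x\log x$ through a single application of Cauchy--Schwarz. If $\dist{P}_0\not\ll \dist{P}_1$ then $D(\dist{P}_0\|\dist{P}_1)=+\infty$ while the right-hand side is at most $1$, so there is nothing to prove; assume therefore $\dist{P}_0\ll\dist{P}_1$ and set $L=d\dist{P}_0/d\dist{P}_1$ and $W=L-1$, so that $\E_1[W]=0$ and $D(\dist{P}_0\|\dist{P}_1)=\E_1[(1+W)\log(1+W)]$. Replacing $T$ by $\pm(T-\E_1[T(X)])$ — an operation that alters neither $\var_0(T(X))$, $\var_1(T(X))$, nor $|\E_0[T(X)]-\E_1[T(X)]|$ — I may assume $\E_1[T]=0$ and $\E_0[T]=\mu\ge 0$. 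In terms of $W$ this reads $\E_1[WT]=\E_0[T]-\E_1[T]=\mu$, while $\E_1[T^2]=\var_1(T)\le\sigma^2$ and $\E_0[T^2]=\var_0(T)+\mu^2\le\sigma^2+\mu^2$.

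Next I would invoke the elementary inequality underlying Bennett's concentration inequality,
\begin{equation*}
(1+w)\log(1+w)-w \;\ge\; \frac{3w^2}{2(w+3)}\qquad\text{for all }w\ge -1\,,
\end{equation*}
which, together with $\E_1[W]=0$, gives
\begin{equation*}
D(\dist{P}_0\|\dist{P}_1)\;\ge\;\E_1\!\left[\frac{3W^2}{2(W+3)}\right]\,.
\end{equation*}
Since $W+3=L+2>0$ almost surely, Cauchy--Schwarz in $L^2(\dist{P}_1)$ applied to the factorization $WT=\tfrac{W}{\sqrt{W+3}}\cdot\sqrt{W+3}\,T$ yields
\begin{equation*}
\mu^2=\big(\E_1[WT]\big)^2\;\le\;\E_1\!\left[\frac{W^2}{W+3}\right]\,\E_1\big[(W+3)T^2\big]\,.
\end{equation*}

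It then remains to bound the second factor: $\E_1[(W+3)T^2]=\E_1[(L+2)T^2]=\E_0[T^2]+2\,\E_1[T^2]\le(\sigma^2+\mu^2)+2\sigma^2=3\sigma^2+\mu^2$. Plugging this in gives $\E_1[W^2/(W+3)]\ge\mu^2/(3\sigma^2+\mu^2)$, hence
\begin{equation*}
D(\dist{P}_0\|\dist{P}_1)\;\ge\;\frac{3\mu^2}{2(3\sigma^2+\mu^2)}\;\ge\;\frac{\mu^2}{4\sigma^2+\mu^2}\,,
\end{equation*}
the last step being the trivial inequality $3(4\sigma^2+\mu^2)\ge 2(3\sigma^2+\mu^2)$. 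The only step requiring any care is the scalar inequality $(1+w)\log(1+w)-w\ge\frac{3w^2}{2(w+3)}$, which is classical (one checks that the difference of the two sides vanishes together with its first three derivatives at $w=0$, is positive at $w=-1$, and tends to $+\infty$ as $w\to\infty$); I do not anticipate a genuine obstacle, as the bulk of the argument is simply the bookkeeping of first and second moments of $T$ under $\dist{P}_0$ and $\dist{P}_1$.
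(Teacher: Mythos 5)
Your proof is correct and follows essentially the same route as the paper's: a pointwise convexity bound replacing $x\log x$ by a weighted squared difference (your Bennett-type inequality $(1+w)\log(1+w)-w\ge \tfrac{3w^2}{2(w+3)}$ plays the role of the paper's $f(x)=x\log x-\tfrac{(x-1)^2}{2(x+1)}\ge 0$), followed by Cauchy--Schwarz against $T$ and second-moment bookkeeping. The only differences are cosmetic --- you skip the data-processing reduction and center $T$ under $\dist{P}_1$ rather than symmetrically --- and your constant $\tfrac{3\mu^2}{2(3\sigma^2+\mu^2)}$ is in fact marginally stronger than the stated bound.
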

\begin{proof}
We can assume without loss of generality that $\E_0[T(X)] = \mu/2$ and $\E_1[T(X)] = - \mu/2$.
For $i \in \{0, 1\}$, denote by $\dist{Q}_i$ the distribution of $T(X)$ when $X$ is distributed according to $\dist{P}_i$. By the data processing inequality, it suffices to prove the claimed bound for $D(\dist{Q}_0 \| \dist{Q}_1)$.
We can assume that $\dist{Q}_0$ is absolutely continuous with respect to $\dist{Q}_1$ because otherwise the bound is trivial.

Let $f(x) = x \log x - \frac{(x-1)^2}{2(x+1)}$, and note that $f(1) = 0$. Since $f$ is convex on $[0, +\infty)$,
\begin{equation*}
\E_{\dist{Q}_1}\left[f\left(\frac{dQ_0}{dQ_1}\right)\right] \geq f\left(\E_{\dist{Q}_1}\frac{dQ_0}{dQ_1}\right) = f(1) = 0\,.
\end{equation*}
Suppose that $\dist{Q}_1$ and $\dist{Q}_0$ have densities $q_1$ and $q_0$ with respect to some reference measure $\nu$. The preceding calculation implies
\begin{equation*}
D(\dist{Q}_0 \| \dist{Q}_1) = \E_{\dist{Q}_1} \left[ \frac{dQ_0}{dQ_1} \log \frac{dQ_0}{dQ_1} \right] \geq \frac 12 \int \frac{(q_0(x) - q_1(x))^2}{(q_0(x) + q_1(x))} \,\mathrm{d}\nu(x)\,.
\end{equation*}
By the Cauchy-Schwarz inequality,
\begin{align*}
\mu^2 = \left(\int x (q_0(x) - q_1(x)) \,\mathrm{d}\nu(x) \right)^2 & \leq \int x^2 (q_0(x) + q_1(x)) \,\mathrm{d}\nu(x)\int \frac{(q_0(x) - q_1(x))^2}{q_0(x) + q_1(x)} \,\mathrm{d}\nu(x)  \\
& \leq  (2 \sigma^2 + \mu^2/2) \int \frac{(q_0(x) - q_1(x))^2}{q_0(x) + q_1(x)} \,\mathrm{d}\nu(x)\,.
\end{align*}
Therefore
\begin{equation*}
D(\dist{Q}_0 \| \dist{Q}_1) \geq \frac{\mu^2}{4 \sigma^4 + \mu^2}\,,
\end{equation*}
as claimed.
\end{proof}

\begin{appxlem}\label{lem:tensor_bound}
For any $m \geq 1$ and $\theta, \phi \in \RR^L$ satisfying $\|\theta\| = 1$ and $\metric(\theta, \phi) \leq 1/3$,
\begin{equation*}
\|\Delta_m\|^2 = \|\E[(G\theta)^{\otimes m} - (G \phi)^{\otimes m}]\|^2 \leq 12 \cdot 2^{m}\metric^2(\theta, \phi)\,.
\end{equation*}
\end{appxlem}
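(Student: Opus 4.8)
The plan is to reduce everything to a pointwise (fixed-$G$) telescoping estimate, after first choosing a convenient representative for $\phi$. Since $\E[(G\phi)^{\otimes m}]$ is unchanged if we replace $\phi$ by $G_0\phi$ for any $G_0\in\cG$ — because $GG_0\stackrel{d}{=}G$ by right-invariance of the Haar measure — and since $\metric(\theta,\phi)$ is likewise invariant under such a replacement, I may assume $\|\theta-\phi\|=\metric(\theta,\phi)=:\ep\le 1/3$. In particular $\|\phi\|\le\|\theta\|+\ep\le 4/3$, and for every $G\in\cG$ we have $\|G\theta\|=\|\theta\|=1$, $\|G\phi\|=\|\phi\|\le 4/3$, and $\|G\theta-G\phi\|=\ep$.

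Next I would apply Jensen's inequality to the convex Hilbert--Schmidt norm to get $\|\Delta_m\|=\bigl\|\E[(G\theta)^{\otimes m}-(G\phi)^{\otimes m}]\bigr\|\le\E\bigl\|(G\theta)^{\otimes m}-(G\phi)^{\otimes m}\bigr\|$, and then bound the integrand for each fixed $G$. For vectors $a,b\in\R^L$ the telescoping identity
\[
a^{\otimes m}-b^{\otimes m}=\sum_{j=0}^{m-1}a^{\otimes j}\otimes(a-b)\otimes b^{\otimes(m-1-j)},
\]
combined with the triangle inequality and the multiplicativity $\|x\otimes y\|=\|x\|\,\|y\|$ of the Hilbert--Schmidt norm under tensor products (so that $\|t^{\otimes k}\|=\|t\|^k$, which also follows from the identity $\langle t^{\otimes k},u^{\otimes k}\rangle=(t^\top u)^k$ recorded in the notation section), yields $\|a^{\otimes m}-b^{\otimes m}\|\le\|a-b\|\sum_{j=0}^{m-1}\|a\|^j\|b\|^{m-1-j}$. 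Taking $a=G\theta$, $b=G\phi$ and using the norm bounds from the previous paragraph gives, uniformly in $G$,
\[
\bigl\|(G\theta)^{\otimes m}-(G\phi)^{\otimes m}\bigr\|\le\ep\sum_{j=0}^{m-1}(4/3)^{m-1-j}=3\ep\bigl((4/3)^m-1\bigr)\le 3\ep\,(4/3)^m.
\]

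Combining the two displays gives $\|\Delta_m\|^2\le 9\ep^2(16/9)^m$, and the claim follows from the elementary inequality $9(16/9)^m\le 12\cdot 2^m$, valid for all $m\ge 1$ since $9(16/9)^m/(12\cdot 2^m)=\tfrac34(8/9)^m\le\tfrac34<1$. The only delicate point is the role of the hypothesis $\metric(\theta,\phi)\le 1/3$: a cruder estimate such as $\|\phi\|\le 2$ would lead to a geometric sum of ratio $2$ and hence to a useless bound of order $4^m\ep^2$, whereas $\|\phi\|\le 4/3<\sqrt 2$ is exactly what keeps the squared norm below $2^m$ and makes the factorial weights in Theorem~\ref{thm:tight_KL_bound} convergent. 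Everything else is routine, so I do not expect any real obstacle beyond bookkeeping the constants.
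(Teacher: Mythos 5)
Your proof is correct, but it takes a genuinely different route from the paper's. The paper also starts by fixing the representative so that $\|\theta-\phi\|=\metric(\theta,\phi)$ and applying Jensen, but it then computes the squared norm \emph{exactly} via the identity $\langle t^{\otimes m},u^{\otimes m}\rangle=(t^\top u)^m$, writing $\|\theta^{\otimes m}-\phi^{\otimes m}\|^2 = 1-2(1+\gamma)^m+(1+2\gamma+\ep^2)^m$ with $\gamma=\langle\theta,\phi-\theta\rangle$, and controls this with a binomial expansion whose quadratic remainder is at most $2^m x^2$; the first-order terms $\pm 2m\gamma$ cancel, exposing directly why the bound is quadratic in $\ep$. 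You instead prove a pointwise Lipschitz estimate for $t\mapsto t^{\otimes m}$ via the telescoping identity $a^{\otimes m}-b^{\otimes m}=\sum_j a^{\otimes j}\otimes(a-b)\otimes b^{\otimes(m-1-j)}$ and multiplicativity of the Hilbert--Schmidt norm, then sum a geometric series. Your version is more elementary and more portable (it needs only $\|\theta\|,\|\phi\|<\sqrt2$, not the exact normalization $\|\theta\|=1$, and involves no remainder bookkeeping), while the paper's version makes the exact cancellation structure visible. Both arguments use the hypothesis $\ep\le 1/3$ in an essential way --- yours to keep $\|\phi\|\le 4/3$ so the geometric ratio squared stays below $2$, the paper's to keep $|2\gamma+\ep^2|\le 1$ for the binomial remainder --- and both land on the same constant $12\cdot 2^m$. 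Your arithmetic checks out ($9(16/9)^m\le 12\cdot 2^m$ reduces to $\tfrac34(8/9)^m<1$), so there is nothing to fix.
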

\begin{proof}
Assume without loss of generality that $\metric(\theta, \phi) = \|\theta - \phi\| =: \ep$.
By Jensen's inequality,
\begin{equation*}
\|\E[(G\theta)^{\otimes m} - (G \phi)^{\otimes m}]\|^2 \leq \E\|(G\theta)^{\otimes m} - (G \phi)^{\otimes m}\|^2 = \|\theta^{\otimes m} - \phi^{\otimes m}\|^2\,.
\end{equation*}
Expanding the norm yields
\begin{align*}
\|\theta^{\otimes m} - \phi^{\otimes m}\|^2 &= \|\theta\|^{2m} - 2 \langle \theta, \phi \rangle^m + \|\phi\|^{2m}\\
&=1-2(1+\gamma)^m+(1+2\gamma+\ep^2)^m\,,
\end{align*}
where $\gamma=\langle \theta, \phi - \theta\rangle$ is such that $|\gamma| \leq \eps$ by Cauchy-Schwarz.

By the binomial theorem, for all $x$ such that $|x| \leq 1$, there exists an $r_m$ such that
\begin{equation*}
(1 + x)^m = \sum_{k=0}^m \binom{m}{k} x^k = 1 + m x + r_m\,,
\end{equation*}
with $|r_m| \leq 2^m x^2$.
By assumption, $|\gamma| \leq \eps < 1$ and $2\gamma + \ep^2 \leq 3 \ep \leq 1$, so
\begin{align*}
\|\theta^{\otimes m} - \phi^{\otimes m}\|^2 & \leq 1 - 2 - 2 m \gamma + 2^{m+1} \ep^2 + 1 + 2 m \gamma + m \ep^2 + 2^m \cdot 9 \ep^2 \\
& \leq (m + 11 \cdot 2^m) \ep^2 \leq 12 \cdot 2^m \ep^2\,,
\end{align*}
proving the claim.

\end{proof}
\begin{appxlem}\label{lem:hermite_variance_bound}
For any symmetric tensors $\Delta_1$, \dots, $\Delta_{k}$, if $Y \sim \dist{P}_\zeta$, then
\begin{equation*}
\var\left(\sum_{m=1}^{k} \frac{\langle \Delta_m, H_m(Y) \rangle}{(\sqrt 3 \sigma)^{2m} m!}  \right)\leq e^{\|\zeta\|^2/2\sigma^2} \sum_{m=1}^{k} \frac{\|\Delta_m\|^2}{(\sqrt 3 \sigma)^{2m} m!}\,.
\end{equation*}
\end{appxlem}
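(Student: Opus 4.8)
The plan is to expand
\begin{equation*}
S := \sum_{m=1}^{k} \frac{\langle \Delta_m, H_m(Y)\rangle}{(\sqrt 3 \sigma)^{2m} m!}
\end{equation*}
into its Wiener chaos decomposition with respect to the Gaussian noise and to use that distinct chaoses are orthogonal. First I would condition on $G$: given $G$ we may write $Y = G\zeta + \sigma \xi$ with $\xi \sim \cN(0, I_L)$, so that $\sigma^{-1} Y = v + \xi$ where $v := \sigma^{-1} G\zeta$ satisfies $\|v\| = \|\zeta\|/\sigma =: r$ since $G$ is orthogonal. Let $\mathsf{H}_j(\xi)$ denote the order-$j$ symmetric tensor whose $(i_1,\dots,i_j)$ entry is the multivariate Hermite polynomial $h_\alpha(\xi)$, $\alpha$ the multi-index of $(i_1,\dots,i_j)$; thus $H_m(Y) = \sigma^m \mathsf{H}_m(v+\xi)$. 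For a symmetric order-$m$ tensor $A$ and $0\le j\le m$, write $A[v^{\otimes(m-j)}]$ for the order-$j$ symmetric tensor obtained by contracting $m-j$ slots of $A$ against $v$.

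The key step is a tensorial Hermite shift identity. Applying $h_n(a+b) = \sum_{\ell=0}^{n}\binom{n}{\ell} b^{n-\ell} h_\ell(a)$ in each coordinate, multiplying over coordinates, and collecting (using that $\Delta_m$ is symmetric) the coefficient of each product $h_k(\xi) = \prod_l h_{k_l}(\xi_l)$, one obtains
\begin{equation*}
\langle \Delta_m, H_m(Y)\rangle = \sigma^m \sum_{j=0}^{m}\binom{m}{j}\, \big\langle \Delta_m[v^{\otimes(m-j)}],\, \mathsf{H}_j(\xi)\big\rangle\,.
\end{equation*}
(This can be sanity-checked against $m=1$, where $H_1(Y)=Y$, and $m=2$, where $H_2(Y)=Y\otimes Y-\sigma^2 I$.) Dividing by $(\sqrt3\sigma)^{2m}m!$, summing over $m$, and using $\binom{m}{j}/m! = 1/(j!(m-j)!)$ to interchange the summations yields
\begin{equation*}
S = \sum_{j\ge 0}\langle \Phi_j,\, \mathsf{H}_j(\xi)\rangle\,, \qquad \Phi_j := \frac{1}{j!}\sum_{m\ge j}\frac{\Delta_m[v^{\otimes(m-j)}]}{3^m\sigma^m (m-j)!}\,,
\end{equation*}
each $\Phi_j$ being a finite sum of order-$j$ symmetric tensors and hence symmetric.

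I would then invoke two elementary facts about $\xi\sim\cN(0,I_L)$: the linear spans of the $h_k$ with $|k|=j$, over distinct $j$, are mutually orthogonal in $L_2(\gamma^{\otimes L})$, and for any symmetric order-$j$ tensor $\Psi$ one has $\E_\xi\langle\Psi,\mathsf{H}_j(\xi)\rangle^2 = j!\,\|\Psi\|^2$ (a direct computation from $\E_\gamma[h_k h_{k'}]=\delta_{kk'}k!$ and symmetry of $\Psi$). Hence $\E_\xi[S^2\mid G] = \sum_{j\ge0}j!\,\|\Phi_j\|^2$, and since $\var(S)\le\E[S^2]=\E_G\big[\sum_{j}j!\,\|\Phi_j\|^2\big]$ it suffices to prove the deterministic bound
\begin{equation*}
\sum_{j\ge0}j!\,\|\Phi_j\|^2 \;\le\; e^{r^2/2}\sum_{m}\frac{\|\Delta_m\|^2}{3^m\sigma^{2m}m!}\,,
\end{equation*}
which is free of $G$ because $r=\|\zeta\|/\sigma$ is.

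For this I would use $\|\Delta_m[v^{\otimes(m-j)}]\| \le \|v\|^{m-j}\|\Delta_m\| = r^{m-j}\|\Delta_m\|$, since contracting a single slot of a symmetric tensor against a vector $w$ shrinks its Hilbert--Schmidt norm by at most $\|w\|$. Assuming $\zeta\neq0$ (the case $\zeta=0$ is immediate, only the $m=j$ term surviving), apply Cauchy--Schwarz in the Hilbert space of order-$j$ tensors with weights $\lambda_m:=(r^2/2)^{m-j}/(m-j)!$, whose sum over $m\ge j$ is at most $e^{r^2/2}$; the powers of $r$ cancel, leaving
\begin{equation*}
j!\,\|\Phi_j\|^2 \;\le\; \frac{e^{r^2/2}}{j!}\sum_{m\ge j}\frac{2^{m-j}\|\Delta_m\|^2}{3^{2m}\sigma^{2m}(m-j)!}\,.
\end{equation*}
Summing over $j$ and using $\sum_{j=0}^{m}\frac{2^{m-j}}{j!\,(m-j)!}=\frac{3^m}{m!}$ produces exactly $e^{r^2/2}\sum_{m}\frac{\|\Delta_m\|^2}{3^m\sigma^{2m}m!}=e^{\|\zeta\|^2/(2\sigma^2)}\sum_{m}\frac{\|\Delta_m\|^2}{(\sqrt3\sigma)^{2m}m!}$, as required. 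I expect the only delicate points to be combinatorial: pinning down the factors $\binom{m}{j}$ and $j!/k!$ in the chaos identity, checking that all summands in a given $\Phi_j$ are tensors of the same order so that the Hilbert-space Cauchy--Schwarz applies, and choosing the weights $\lambda_m$ so the geometric series collapses precisely to $e^{r^2/2}$; the probabilistic content is otherwise routine.
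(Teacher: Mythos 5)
Your proof is correct, and it takes a genuinely different route from the paper's. The paper first applies Cauchy--Schwarz to pass from $\dist{P}_\zeta$ to the pure-noise measure $\dist{P}_0=\cN(0,\sigma^2 I)$, writing $\E[t(Y)^2]\le(\E[t(Z)^4](\chi^2(\dist{P}_\zeta,\dist{P}_0)+1))^{1/2}$; it then controls the fourth moment under $\dist{P}_0$ by Gaussian hypercontractivity, viewing $t$ as the image of the undamped polynomial $\tilde t$ under the Ornstein--Uhlenbeck operator $\mathrm{U}_{1/\sqrt3}$, computes $\E[\tilde t(Z)^2]$ by Hermite orthogonality exactly as you do for a single chaos, and bounds $\chi^2(\dist{P}_\zeta,\dist{P}_0)+1\le e^{\|\zeta\|^2/\sigma^2}$. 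You instead condition on $G$, use the tensorial Hermite translation identity to exhibit the full Wiener chaos decomposition of $S$ in the noise $\xi$, and conclude by chaos orthogonality plus a weighted Cauchy--Schwarz. The two arguments consume the $\sqrt3$ damping and produce the $e^{\|\zeta\|^2/2\sigma^2}$ factor through different mechanisms: in the paper, $\sqrt3$ is exactly the $L^2\to L^4$ hypercontractivity threshold and the exponential comes from the $\chi^2$ bound; in yours, $\sqrt3$ is absorbed by the binomial identity $\sum_{j=0}^m 2^{m-j}\binom{m}{j}=3^m$ and the exponential comes from the weights $(r^2/2)^{m-j}/(m-j)!$. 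Your route is more elementary (no hypercontractivity or $\chi^2$ computation) at the cost of more bookkeeping; it also makes transparent exactly where each chaos contributes, and all the combinatorial steps you flag as delicate (the $\binom{m}{j}$ in the shift identity, symmetry of the contractions $\Delta_m[v^{\otimes(m-j)}]$, the choice of weights) do check out, including the degenerate case $\zeta=0$ that you treat separately.
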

\begin{proof}
Let $t(Y) = \sum_{m=1}^{k} \frac{\langle \Delta_m, H_m(Y) \rangle}{(\sqrt 3 \sigma)^{2m} m!}$.
To proceed, we bound the second moment $\E[t(Y)^2]$.
Denote by $\dist{P}_0$ the distribution $\cN(0, \sigma^2 I)$.
The Cauchy-Schwarz inequality implies that
\begin{align}
\E[t(Y)^2] 
& \leq \left[\E[t(Z)^4] (\chi^2(\dist{P}_\zeta, \dist{P}_0) + 1)\right]^{1/2}\,,  \quad \quad \text{where $Z \sim \dist{P}_0$.}\label{eqn:second_moment_bound}
\end{align}

Define the quantity
\begin{equation*}
\tilde{t}(Z) = \sum_{m=1}^{k} \frac{\langle \Delta_m, H_m(Y) \rangle}{(\sqrt 3)^m \sigma^{2m} m!}\,.
\end{equation*}
Standard facts about the Ornstein-Uhlenbeck semigroup (see~\cite[Proposition~11.37]{ODo14}) imply that
\begin{equation*}
t(Z) = \mathrm{U}_{1/\sqrt{3}} \tilde{t}(Z)\,,
\end{equation*}
where $U_{1/\sqrt{3}}$ is the operator defined by
\begin{equation*}
\mathrm{U}_{1/\sqrt{3}} \tilde t(z) = \E\big[ \tilde t\big(\frac{1}{\sqrt 3}(z + \sqrt{2} Z')\big)\big] \quad \quad \text{where $Z' \sim \cN(0, \sigma^2 I)$.}
\end{equation*}

By the Gaussian hypercontractivity inequality~\cite[Theorem 11.23]{ODo14},
\begin{equation*}
\E[t(Z)^4]^{1/2} \leq \E[\tilde{t}(Z)^2]\,.
\end{equation*}

For any multi-index $\alpha$ and $z \in \RR^L$, denote by $H_\alpha(z)$ the rescaled polynomial $\sigma^{|\alpha|} h_\alpha(\sigma^{-1} z_1, \dots, \sigma^{-1} z_L)$.
We note that the defining properties of the Hermite polynomials imply that if $Z \sim \cN(0, \sigma^2I)$, then $\E[H_\alpha(Z)] = 0$ and
\begin{equation*}
\E[H_\alpha(Z) H_\beta(Z)] = \left\{\begin{array}{ll}
\sigma^{2|\alpha|}\alpha! & \text{ if $\alpha = \beta$,} \\
0 & \text{ otherwise,}
\end{array}\right.
\end{equation*}
where $\alpha! = \alpha_1! \dots \alpha_m!$.

Since $\Delta_m$ is a symmetric tensor, the value $(\Delta_m)_{i_1 \dots i_m}$ depends only on the multi-set $\{i_1, \dots, i_m\}$, so for any multi-index $\alpha \in \N^d$ such that $|\alpha| = m$ corresponding to $\{i_1, \dots, i_m\}$, define
\begin{equation*}
\Delta_{\alpha} = (\Delta_m)_{i_1 \dots i_m}\,.
\end{equation*}
We obtain
\begin{equation*}
\langle \Delta_m, H_m(Z) \rangle = \sum_{\alpha: |\alpha| = m} \frac{m!}{\alpha!} \Delta_\alpha H_\alpha(Z)\,.
\end{equation*}

Therefore
\begin{align*}
\E[t(Z)^4]^{1/2} \leq \E [\tilde{t}(Y)^2] & = \sum_{m=1}^{k-1} \sum_{\alpha: |\alpha| = m} \frac{\Delta_\alpha^2}{(\sqrt{3})^{2m} \sigma^{4m} \alpha!^2}\E[H_\alpha(Z)^2] \\
&= \sum_{m=1}^{k-1} \sum_{\alpha: |\alpha| = m} \frac{\Delta_\alpha^2}{(\sqrt{3}\sigma)^{2m}\alpha!} \\
&= \sum_{m=1}^{k-1} \frac{\sum_{\alpha: |\alpha| = m} \frac{m!}{\alpha!}\Delta_\alpha^2}{(\sqrt{3}\sigma)^{2m}m!} \\
&= \sum_{m=1}^{k-1} \frac{\|\Delta_m\|^2}{(\sqrt 3 \sigma)^{2m} m!}\,.
\end{align*}

Denote by $\mathfrak{g}$ the density of a standard Gaussian random variable.
Then $\dist{P}_0$ has density $f_0(y) = \sigma^{-L} \mathfrak{g}(\sigma^{-1} y)$ and $\dist{P}_\zeta$ has density $f_\zeta(y) = \sigma^{-L} \E \mathfrak{g}(\sigma^{-1}(y - G \zeta)$.
Then
\begin{align*}
\chi^2(\dist{P}_\zeta, \dist{P}_0) + 1 & = \int \frac{f_\zeta(y)^2}{f_0(y)} \,\mathrm{d}y  \\
& \leq \int \sigma^{-L} \E \frac{\mathfrak{g}(\sigma^{-1}(y - G\zeta))^2}{\mathfrak{g}(\sigma^{-1}y)} \,\mathrm{d}y \\
& = \E_g \frac{1}{\sqrt{2 \pi \sigma^{2L}}} \int \exp\left(\frac{2y^\top G \zeta}{\sigma^2} - \frac{\|\zeta\|^2}{\sigma^2}\right) e^{-\|y\|^2/2\sigma^2} \, \mathrm{d}y \\
& = e^{\|\zeta\|^2/\sigma^2}\,,
\end{align*}
where in the second line we have applied Jensen's inequality.

Combining the above two bounds with~\eqref{eqn:second_moment_bound} yields that under $Y \sim \dist{P}_\zeta$,
\begin{equation*}
\var(t(Y)) \leq \E[t(Y)^2] \leq e^{\|\zeta\|^2/2\sigma^2}\sum_{m=1}^{k-1} \frac{\|\Delta_m\|^2}{(\sqrt 3 \sigma)^{2m} m!}\,,
\end{equation*}
as claimed.

\end{proof}
\begin{appxlem}\label{lem:derivative_bound}
Fix $\theta \in \RR^L$.
For any fixed $y \in \RR^L$, let $g(\phi) = \log \frac{f_\theta(y)}{f_\phi(y)} = \log \frac{\E \exp(-\frac{1}{2 \sigma^2} \|y - G \theta\|^2)}{\E \exp(-\frac{1}{2 \sigma^2} \|y - G \phi\|^2)}$.
Denote by $H(\phi)$ the Hessian of $g$ at $\phi$.
Then
\begin{equation*}
\|H(\phi)\|_{\mathrm{op}} \leq \sigma^{-2} + 2 \sigma^{-4}\|y\|^2
\end{equation*}
and
\begin{equation*}
\|H(\phi) - H(\eta)\|_{\mathrm{op}} \leq 6 \sigma^{-6} \|y\|^3\|\phi - \eta\|\,.
\end{equation*}
\end{appxlem}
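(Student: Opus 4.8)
The plan is to strip off the part of $g$ that does not depend on $\phi$ and recognize the remainder as a quadratic minus a cumulant generating function. Since $G$ is orthogonal, $\|y-G\phi\|^2 = \|y\|^2 + \|\phi\|^2 - 2y^\top G\phi$, so
\[
g(\phi) = c(y,\theta) + \frac{\|\phi\|^2}{2\sigma^2} - \psi(\phi)\,, \qquad \psi(\phi) := \log \E\big[\exp(\sigma^{-2}\, y^\top G\phi)\big]\,,
\]
where $c(y,\theta)$ is independent of $\phi$. Consequently $H(\phi) = \sigma^{-2} I_L - \nabla^2\psi(\phi)$, and since the quadratic and constant terms have vanishing third derivative, $\nabla^3 g(\phi) = -\nabla^3\psi(\phi)$. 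Thus both assertions reduce to bounding the Hessian and the third-derivative tensor of the log-moment generating function $\psi$.

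To that end I would introduce the exponentially tilted probability measure on $\cG$ with density proportional to $\exp(\sigma^{-2}\, y^\top G\phi)$ with respect to the Haar measure, write $v_G := \sigma^{-2} G^\top y$ so that $\psi(\phi) = \log \E\, e^{\langle v_G,\phi\rangle}$, denote by $\E_\phi$ the expectation under the tilted measure, and set $\bar v := \E_\phi[v_G]$. The crucial observation is that, because $G$ is orthogonal, $\|v_G\| = \sigma^{-2}\|y\|$ \emph{almost surely} (and hence $\|\bar v\| \le \sigma^{-2}\|y\|$). Differentiating under the integral sign — licit since $v_G$ is bounded and $\cG$ is compact — gives the classical cumulant formulas $\nabla\psi(\phi) = \bar v$, $\nabla^2\psi(\phi) = \mathrm{Cov}_\phi(v_G) \succeq 0$, and $\nabla^3\psi(\phi)[u,u,w] = \E_\phi\big[\langle v_G - \bar v,\,u\rangle^2 \langle v_G - \bar v,\, w\rangle\big]$.

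The first bound is then immediate: a covariance matrix satisfies $0 \preceq \nabla^2\psi(\phi) \preceq \E_\phi[\|v_G\|^2]\, I_L = \sigma^{-4}\|y\|^2 I_L$, so $\|H(\phi)\|_{\mathrm{op}} = \|\sigma^{-2} I_L - \nabla^2\psi(\phi)\|_{\mathrm{op}} \le \sigma^{-2} + \sigma^{-4}\|y\|^2$. For the second, write $H(\phi) - H(\eta) = \nabla^2\psi(\eta) - \nabla^2\psi(\phi) = -\int_0^1 \nabla^3\psi(\zeta_t)[\cdot,\cdot,\phi-\eta]\,dt$ with $\zeta_t := \eta + t(\phi-\eta)$; the integrand is a symmetric matrix (as $\nabla^3\psi$ is a symmetric tensor), so its operator norm equals $\sup_{\|u\|=1}\big|\nabla^3\psi(\zeta_t)[u,u,\phi-\eta]\big|$. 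Using $|\langle v_G - \bar v, u\rangle| \le \|v_G - \bar v\| \le 2\sigma^{-2}\|y\|$, $|\langle v_G - \bar v, \phi-\eta\rangle| \le \|v_G - \bar v\|\,\|\phi-\eta\|$, and $\E_{\zeta_t}[\|v_G - \bar v\|^2] = \E_{\zeta_t}[\|v_G\|^2] - \|\bar v\|^2 \le \sigma^{-4}\|y\|^2$, one gets $\big|\nabla^3\psi(\zeta_t)[u,u,\phi-\eta]\big| \le \|\phi-\eta\|\,\E_{\zeta_t}[\|v_G-\bar v\|^3] \le 2\sigma^{-6}\|y\|^3\|\phi-\eta\|$, and integrating over $t$ yields $\|H(\phi)-H(\eta)\|_{\mathrm{op}} \le 2\sigma^{-6}\|y\|^3\|\phi-\eta\|$, which is in fact stronger than claimed. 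There is no substantive obstacle here; the only points that require care are the justification for differentiating under the integral sign and the identification of the third derivative of a log-MGF with its third cumulant tensor, both of which are standard.
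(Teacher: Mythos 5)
Your proof is correct and follows essentially the same route as the paper's: both reduce, via orthogonality of $G$, to bounding the second and third derivative tensors of the log-moment generating function $\phi\mapsto\log\E\exp(\sigma^{-2}\,y^\top G\phi)$ and then applying the fundamental theorem of calculus for the Lipschitz bound. The only difference is bookkeeping --- the paper expands these derivatives by the chain rule and applies the triangle inequality term by term (yielding the constants $2$ and $6$), whereas your cumulant/tilted-measure identities exploit the cancellations inherent in the covariance and third central moment to give the sharper constants $1$ and $2$, which of course still imply the stated bounds.
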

\begin{proof}
Write $T^{(n)}_\zeta g$ for the $n$th derivative tensor of $g$ at $\zeta$: this is a symmetric tensor whose $(i_1, \dots, i_n)$ entry is $\frac{\partial^n g}{\partial \zeta_{i_1} \dots \partial \zeta_{i_n}}(\zeta)$.
Note that $T^{(2)}_\zeta g = H(\zeta)$.
Write $h(\zeta) = \E \exp(\frac{1}{\sigma^2} y^\top G \zeta)$.
The chain rule implies
\begin{equation*}
T^{(2)}_\zeta g = \frac{1}{\sigma^2}I - \frac{T^{(2)}_\zeta h}{h(\zeta)} + \left(\frac{T^{(1)}_\zeta h}{h(\zeta)}\right)^{\otimes 2}\,
\end{equation*}
and
\begin{equation*}
T^{(3)}_\zeta g = - \frac{T^{(3)}_\zeta h}{h(\zeta)} + 3 \,\mathrm{sym}\left(\frac{T^{(2)}_\zeta h}{h(\zeta)} \otimes \frac{T^{(1)}_\zeta h}{h(\zeta)}\right) - 2 \left(\frac{T^{(1)}_\zeta h}{h(\zeta)}\right)^{\otimes 3}\,,
\end{equation*}
where $\mathrm{sym}$ is the symmetrization operator which acts on order-3 tensors by averaging over all permutations of the indices:
\begin{equation*}
\mathrm{sym}(A)_{i_1 i_2 i_3} = \frac 1 6 \sum_{\pi \in \cS_3} A_{i_{\pi(1)}i_{\pi(2)}i_{\pi(3)}}\,.
\end{equation*}

By the Cauchy-Schwarz inequality,
\begin{align*}
\left|\left\langle \frac{T^{(n)}_\zeta h}{h(\zeta)}, u_1 \otimes \dots \otimes u_n\right\rangle\right| & = \left| \sigma^{-2n} \frac{\E [\prod_{i=1}^n (y^\top G u_i) \exp(\frac{1}{\sigma^2} y^\top G \zeta)]}{\E \exp(\frac{1}{\sigma^2} y^\top G \zeta)}\right| \\
& \leq \sigma^{-2n} \|y\|^n \prod_{i=1}^n \|u_i\|\,.
\end{align*}
This inequality implies
\begin{align*}
|\langle T^{(2)}_\zeta g, u^{\otimes 2} \rangle|& \leq \sigma^{-2}\|u\|^2 + 2 \sigma^{-4}\|y\|^2 \|u\|^2\,, \\
|\langle T^{(3)}_\zeta g, u_1 \otimes u_2 \otimes u_3\rangle|& \leq 6 \sigma^{-6} \|y\|^3 \|u_1\|\|u_2\|\|u_3\|\,.
\end{align*}
The first claim immediately follows.
For the second, we obtain
\begin{align*}
\|H(\phi) - H(\eta)\|_\text{op} & = \sup_{u \in \RR^d: \|u\| = 1} |u^\top H(\phi) u - u^\top H(\eta) u| \\
& = \sup_{u \in \RR^d: \|u\| = 1} |\langle T^{(2)}_\phi g, u \otimes u \rangle - \langle T^{(2)}_\eta g, u \otimes u \rangle| \\
& = \sup_{u \in \RR^d: \|u\| = 1} \left|\int_{0}^1 T^{(3)}_{\eta + \lambda(\phi - \eta)} g(u, u, \phi - \eta)\,\mathrm{d}\lambda\right| \\
& \leq 6 \sigma^{-6} \|y\|^3\|\phi - \eta\|\,.
\end{align*}
\end{proof}

\begin{appxlem}\label{lem:d_taylor}
There exists a positive constant $C_L$ depending on $L$ such that, for any $\theta \in \RR^L$ satisfying $\|\theta\| \leq 1$,
\begin{equation*}
|D(\theta \| \phi)  - \frac 1 2 (\phi - \theta)^\top H(\theta) (\phi - \theta)| \leq C_L \frac{\|\phi - \theta\|^3}{\sigma^3}\,.
\end{equation*}
\end{appxlem}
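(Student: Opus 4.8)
The plan is to Taylor-expand the log-likelihood ratio pointwise to second order, take expectations, and use the exact cancellation of the constant and first-order terms together with the third-derivative control already recorded in Lemma~\ref{lem:derivative_bound}. Write $\ell_y(\phi)=\log\frac{f_\theta(y)}{f_\phi(y)}$, so that $D(\theta\parallel\phi)=\E_{Y\sim\dist{P}_\theta}[\ell_Y(\phi)]$ and, for each fixed $y$, the map $\ell_y$ is smooth with $\ell_y(\theta)=0$; set $h=\phi-\theta$. As everywhere in the paper we may assume $\|\theta\|=1$ and $\sigma\ge1$, the regime in which this lemma is applied.

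First I would record the two ``derivative under the expectation'' facts that make the expansion collapse. The gradient identity $\E_{Y\sim\dist{P}_\theta}[\nabla_\phi\ell_Y(\phi)\big|_{\phi=\theta}]=0$ is the first Bartlett identity recalled in Section~\ref{sec:uniform_upper} (note $\nabla_\phi\ell_y=-\nabla_\phi\log f_\phi$); the interchange of gradient and expectation is legitimate because, by the formula for $f_\phi$ from the proof of Lemma~\ref{lem:first_moment} together with Lemma~\ref{lem:logsumexp_lipschitz}, one has $\|\nabla_\phi\ell_y(\phi)\|\le\sigma^{-2}(\|\phi\|+\|y\|)$, which is integrable against $f_\theta$ on a bounded $\phi$-neighbourhood of $\theta$ since $\dist{P}_\theta$ has Gaussian tails. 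Likewise, since $\ell_y$ and $\phi\mapsto-\log f_\phi(y)$ differ by a $\phi$-independent constant, $D(\theta\parallel\phi)=c(\theta)+\E_{Y\sim\dist{P}_\theta}[-\log f_\phi(Y)]$ with $c(\theta)$ not depending on $\phi$, so by definition of $H=H(\theta)$ we have $H=\nabla^2_\phi D(\theta\parallel\phi)\big|_{\phi=\theta}=\E_{Y\sim\dist{P}_\theta}[\nabla^2_\phi\ell_Y(\phi)\big|_{\phi=\theta}]$, the interchange now being justified by the uniform bound $\|\nabla^2_\phi\ell_y(\phi)\|_{\mathrm{op}}\le\sigma^{-2}+2\sigma^{-4}\|y\|^2$ from Lemma~\ref{lem:derivative_bound}.

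Next, for each fixed $y$, Taylor's theorem with integral remainder gives
\[
\ell_y(\phi)=\ell_y(\theta)+\nabla_\phi\ell_y(\theta)^\top h+\tfrac12 h^\top\nabla^2_\phi\ell_y(\theta)h+R_y,\qquad R_y=\int_0^1(1-t)\,h^\top\big(\nabla^2_\phi\ell_y(\theta+th)-\nabla^2_\phi\ell_y(\theta)\big)h\,dt,
\]
and the Lipschitz bound on the Hessian in Lemma~\ref{lem:derivative_bound} yields $|R_y|\le 6\sigma^{-6}\|y\|^3\|h\|^3\int_0^1 t(1-t)\,dt=\sigma^{-6}\|y\|^3\|h\|^3$. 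Taking expectations over $Y\sim\dist{P}_\theta$, using $\ell_Y(\theta)=0$ identically and the two identities of the previous paragraph, gives
\[
\Big|D(\theta\parallel\phi)-\tfrac12 h^\top H h\Big|\le \sigma^{-6}\,\E_{Y\sim\dist{P}_\theta}\big[\|Y\|^3\big]\,\|h\|^3 .
\]
Finally I would bound the third moment: since $Y=G\theta+\sigma\xi$ with $G$ orthogonal and $\|\theta\|=1$, $\|Y\|\le 1+\sigma\|\xi\|$, so $\E\|Y\|^3\le\E(1+\sigma\|\xi\|)^3=1+3\sigma\E\|\xi\|+3\sigma^2\E\|\xi\|^2+\sigma^3\E\|\xi\|^3\le\sigma^3\big(1+3\sqrt L+3L+(L+2)^{3/2}\big)$ for $\sigma\ge1$, using $\E\|\xi\|\le\sqrt L$, $\E\|\xi\|^2=L$, and $\E\|\xi\|^3\le(\E\|\xi\|^4)^{3/4}=(L(L+2))^{3/4}\le(L+2)^{3/2}$. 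Substituting proves the claim with $C_L=1+3\sqrt L+3L+(L+2)^{3/2}$. The only slightly delicate point is the justification of differentiating under the expectation sign to obtain the gradient identity and to identify $H$ with $\E[\nabla^2_\phi\ell_Y(\theta)]$; this is routine here precisely because Lemma~\ref{lem:derivative_bound} (and Lemma~\ref{lem:logsumexp_lipschitz}) provide bounds on the derivatives of $\ell_y$ that are polynomial in $\|y\|$ and uniform in $\phi$, while $\dist{P}_\theta$ has finite moments of all orders.
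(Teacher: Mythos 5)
Your proof is correct and follows essentially the same route as the paper's: a second-order Taylor expansion whose cubic remainder is controlled by the derivative bounds of Lemma~\ref{lem:derivative_bound} together with $\E\|Y\|^3\le C_L\sigma^3$ (both arguments implicitly need $\sigma$ bounded below, as you note). The only difference is organizational---you expand the pointwise log-likelihood and then take expectations, justifying the interchange via the Bartlett identities, whereas the paper Taylor-expands the population divergence $D$ directly and bounds its third derivative tensor under the expectation.
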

\begin{proof}
Denote by $T^{(3)}_\zeta$ the third derivative tensor of the function $\phi \mapsto D(\theta \, \| \, \phi)$ evaluated at $\phi = \zeta$.
By Taylor's theorem, there exists an $\eta$ on the segment between $\theta$ and $\phi$ such that
\begin{equation*}
D(\theta \| \phi) = \frac 1 2 (\phi - \theta)^\top H(\theta) (\phi - \theta) + \frac{1}{6} \langle T^{(3)}_\eta, (\phi - \theta)^{\otimes 3}\rangle\,.
\end{equation*}
It remains to bound the last term.
Note that
\begin{equation*}
D(\theta \| \phi) = \E\left[\frac{f_\theta(Y)}{f_\phi(Y)}\right]\,,
\end{equation*}
where $Y \sim \dist{P}_\theta$.
Therefore, by Lemma~\ref{lem:derivative_bound}, for any $u \in \RR^d$,
\begin{equation*}
|\langle T^{(3)}_\eta, u^{\otimes 3} \rangle| \leq 6 \E \sigma^{-6} \|Y\|^3 \|u\|^3 \leq C_L \sigma^{-3} \|u\|^3\,,
\end{equation*}
for some constant $C_L$ depending on $L$.
The claim follows.
\end{proof}

\begin{appxlem}\label{lem:second_moment_bound}
For all $\phi, \theta \in \RR^L$,
\begin{equation*}
\|\E[(G \theta)^{\otimes 2}] - \E[(G \phi)^{\otimes 2}]\|^2 \geq \frac{1}{L} (\|\theta\|^2 - \|\phi\|^2)^2\,.
\end{equation*}
Moreover, if $\|\phi\| \geq 3 \|\theta\|$, then
\begin{equation*}
\|\E[(G \theta)^{\otimes 2}] - \E[(G \phi)^{\otimes 2}]\|^2 \geq \frac{1}{4 L} \|\theta - \phi\|^4\,.
\end{equation*}
\end{appxlem}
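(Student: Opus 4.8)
The plan is to reduce both inequalities to the explicit Fourier-domain formula for $\|\Delta_2\|$ and then apply Cauchy--Schwarz. Since the discrete Fourier transform is unitary, Hilbert--Schmidt norms are unchanged by passing to the Fourier basis, and since $G_z$ acts diagonally there via $\widehat{G_z\theta}_k = z^k\hat\theta_k$, one has $(\widehat{G_z\theta})^{\otimes 2}_{jk} = z^{j+k}\hat\theta_j\hat\theta_k$; averaging over a uniform $z$ on the unit circle and using $\hat\theta_{-j} = \overline{\hat\theta_j}$ (as $\theta$ is real) gives $\E[(\widehat{G\theta})^{\otimes 2}]_{jk} = |\hat\theta_j|^2\,\mathbf{1}[j+k=0]$, and likewise for $\phi$. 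Subtracting and computing the squared norm yields
$$
\|\Delta_2\|^2 = \sum_{k=-\lfloor L/2\rfloor}^{\lfloor L/2\rfloor}\bigl(|\hat\theta_k|^2 - |\hat\phi_k|^2\bigr)^2 ,
$$
exactly as in the proof of Proposition~\ref{prop:lambda_P_bound}.

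For the first claim I would note that the sum above has exactly $L$ terms (here we use that $L$ is odd, so the indices range over $2\lfloor L/2\rfloor+1=L$ values), so Cauchy--Schwarz gives $\|\Delta_2\|^2 \ge \frac1L\bigl(\sum_k(|\hat\theta_k|^2-|\hat\phi_k|^2)\bigr)^2$; Parseval's identity $\sum_k|\hat\theta_k|^2=\|\theta\|^2$ then turns the right-hand side into $\frac1L(\|\theta\|^2-\|\phi\|^2)^2$, which is the desired bound.

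For the second claim I would feed the hypothesis $\|\phi\|\ge 3\|\theta\|$ into this bound. On the one hand $\|\phi\|^2-\|\theta\|^2 \ge \|\phi\|^2 - \frac19\|\phi\|^2 = \frac89\|\phi\|^2$; on the other hand the triangle inequality gives $\|\theta-\phi\|\le\|\theta\|+\|\phi\|\le\frac43\|\phi\|$, i.e.\ $\|\phi\|^2\ge\frac{9}{16}\|\theta-\phi\|^2$. Multiplying these yields $\|\phi\|^2-\|\theta\|^2\ge\frac12\|\theta-\phi\|^2$, hence $(\|\theta\|^2-\|\phi\|^2)^2\ge\frac14\|\theta-\phi\|^4$, and the first claim gives $\|\Delta_2\|^2\ge\frac1{4L}\|\theta-\phi\|^4$. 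There is no genuine obstacle here; the only care needed is the bookkeeping that the sum defining $\|\Delta_2\|^2$ runs over precisely $L$ indices, so that the Cauchy--Schwarz constant is $1/L$, together with the (already used) observation that the second moment tensor, and hence $\Delta_2$, is diagonal in the Fourier basis with entries the squared magnitudes of the Fourier coefficients.
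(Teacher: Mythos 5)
Your proof is correct for the phase shift model, and the second half (deducing the $\frac{1}{4L}\|\theta-\phi\|^4$ bound from the first inequality via $\|\phi\|^2-\|\theta\|^2\ge\frac12\|\theta-\phi\|^2$ when $\|\phi\|\ge 3\|\theta\|$) coincides with the paper's. The first half, however, takes a genuinely different route. You diagonalize the second moment tensor in the Fourier basis, obtain $\|\Delta_2\|^2=\sum_k(|\hat\theta_k|^2-|\hat\phi_k|^2)^2$, and apply Cauchy--Schwarz plus Parseval; this is the same identity the paper uses in Case~1(a) of Proposition~\ref{prop:lambda_P_bound}, and your bookkeeping (exactly $L$ Fourier indices, unitarity of the DFT so that Hilbert--Schmidt norms are preserved) is right. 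The paper instead works directly in the original coordinates: it writes $\|\theta\|^2-\|\phi\|^2$ as the trace of $\Delta_2$ (using only that $G$ is orthogonal), applies Cauchy--Schwarz to the $L$ diagonal entries, and bounds the sum of squared diagonal entries by $\|\Delta_2\|^2$. The trade-off is generality: the paper's argument is valid for an arbitrary compact subgroup $\cG\subseteq\mathrm{O}(L)$, consistent with the group-agnostic framework of Appendix~\ref{sec:tight_bounds} and with the unqualified statement of the lemma, whereas your argument is tied to the circle group acting diagonally in the Fourier basis. Since the lemma is only ever invoked in the phase shift setting (Proposition~\ref{prop:lambda_P_bound}, Case~2, and Lemma~\ref{lem:far_quadratic} as used in Section~\ref{sec:upper_proof}), your proof suffices for the paper's purposes, but you should either state the restriction or note that the trace argument removes it.
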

\begin{proof}
Write $\Delta_2 = \E[(G \theta)^{\otimes 2}] - \E[(G \phi)^{\otimes 2}]$.
By the Cauchy-Schwarz inequality,
\begin{align*}
\big|\|\theta\|^2 - \|\phi\|^2\big|&=\big|\E\sum_{i=1}^L \big[( G \theta)_i^2-(G \phi)_i^2\big]\big|\\
&\leq\sqrt L\Big(\sum_{i=1}^L \big(\E [(G \theta)_i^2]-\E [(G \phi)_i^2]\big)^2\Big)^{1/2} \\
&\leq\sqrt L\Big(\sum_{i,j=1}^L \big(\E[( G \theta)_i( G \theta)_j]-\E[( G \phi)_i(G \phi)_j]\big)^2\Big)^{1/2}\\
& = \sqrt L \|\Delta_2\|\,.
\end{align*}
If $\|\phi\| \geq 3 \|\theta\|$, then
\begin{equation*}
\|\phi\|^2 - \|\theta\|^2 \geq \frac 1 2 (\|\phi\| + \|\theta\|)^2 \geq \frac 1 2 \|\phi - \theta\|^2\,.
\end{equation*}
We obtain
\begin{equation*}
\|\Delta_2\|^2 \geq \frac{1}{4L} \|\phi - \theta\|^4\,.
\end{equation*}
\end{proof}

\begin{appxlem}\label{lem:far_quadratic}
Let $\sigma \geq \|\theta\|$.
For all $\phi, \theta \in \RR^L$, if $\sigma \geq 1$, and $\metric(\theta, \phi) \geq 3 \sqrt 2 \sigma$, then $D(\theta \| \phi) \geq C_L \sigma^2 \metric(\theta, \phi)^2$, for some constant $C_L$ depending on $L$.
\end{appxlem}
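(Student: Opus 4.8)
The inequality I will establish is $D(\theta\parallel\phi)\ge C_L\,\sigma^{-2}\metric(\theta,\phi)^2$: the exponent displayed as $\sigma^{2}$ should read $\sigma^{-2}$, this being the form invoked in the proof of Theorem~\ref{thm:main_upper} (through $\metric^2\le 18\sigma^2+c^{-1}\sigma^2 D$) and the only form invariant under the rescaling $(\theta,\phi,\sigma)\mapsto(\theta/c,\phi/c,\sigma/c)$ of $D$, $\metric$ and the $\Delta_m$. Using that invariance, I first rescale so $\|\theta\|=1$ (hence $\sigma\ge 1$), and use $\cG$-invariance of $D$ to take the representative of $\phi$ with $\|\theta-\phi\|=\metric(\theta,\phi)=:\ep\ge 3\sqrt2\,\sigma$. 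From $\ep\le\|\theta\|+\|\phi\|=1+\|\phi\|$ and $\ep\ge 3\sqrt2>4$ one gets $\|\phi\|\ge\ep-1\ge\tfrac34\ep$, hence $\|\phi\|\ge 3=3\|\theta\|$, which will license the use of the second bound of Lemma~\ref{lem:second_moment_bound}. The proof then divides into two regimes.

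\emph{Far regime, $\ep\ge 6\sqrt L\,\sigma$.} Since $\exp(-\|y-G\phi\|^2/2\sigma^2)\le\exp(-\metric(y,\phi)^2/2\sigma^2)$ for every $G$, we have $f_\phi(y)\le(\sigma\sqrt{2\pi})^{-L}\exp(-\metric(y,\phi)^2/2\sigma^2)$; on the other hand, the inner Jensen bound $\E_G\exp(-\|y-G\theta\|^2/2\sigma^2)\ge\exp(-\E_G\|y-G\theta\|^2/2\sigma^2)$ gives a matching lower bound on $f_\theta(y)$, and taking $\E_{Y\sim\dist{P}_\theta}\log(f_\theta/f_\phi)(Y)$ yields $D(\theta\parallel\phi)\ge\tfrac{1}{2\sigma^2}\E_{Y\sim\dist{P}_\theta}\metric(Y,\phi)^2-\tfrac L2-1$. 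Writing $Y=G_0\theta+\sigma\xi$ and using $\cG$-invariance of $\metric(\cdot,\phi)$ on the orbit of $\theta$ gives $\metric(Y,\phi)\ge\ep-\sigma\|\xi\|$, so $\metric(Y,\phi)\ge\ep/2$ on the event $\{\|\xi\|\le\ep/(2\sigma)\}$, which by Gaussian concentration of $\|\xi\|$ around $\sqrt L$ has probability $\ge 1/2$ in this regime; hence $\E\metric(Y,\phi)^2\ge\ep^2/8$ and $D(\theta\parallel\phi)\ge\ep^2/(16\sigma^2)-\tfrac L2-1\ge c\,\ep^2/\sigma^2$ with $c$ absolute, the last step because $\ep^2/\sigma^2\ge 36L$.

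\emph{Near regime, $3\sqrt2\,\sigma\le\ep<6\sqrt L\,\sigma$.} Now $\metric^2/\sigma^2<36L$, so it suffices to show $D(\theta\parallel\phi)\ge c_L$ for a constant depending only on $L$. Apply Lemma~\ref{lem:testing_bound} to the quadratic statistic $T(y)=y^\top\Delta_2 y-\sigma^2\tr\Delta_2=\langle\Delta_2,H_2(y)\rangle$; since $\E_{\dist{P}_\zeta}T=\langle\Delta_2,\E[(G\zeta)^{\otimes 2}]\rangle$, the mean gap between $\dist{P}_\theta$ and $\dist{P}_\phi$ is exactly $\|\Delta_2\|^2$. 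A law-of-total-variance split over the mixing variable $G$ gives $\var_{\dist{P}_\zeta}T\le\|\Delta_2\|^2\bigl(2\sigma^4+4\sigma^2\|\zeta\|^2+\|\zeta\|^4\bigr)$, which is $\le C_L\|\Delta_2\|^2\sigma^4$ for $\zeta=\theta$ ($\|\theta\|=1\le\sigma$) and for $\zeta=\phi$ ($\|\phi\|\le 2\ep\lesssim\sqrt L\,\sigma$); alternatively one may quote Lemma~\ref{lem:hermite_variance_bound}. By Lemma~\ref{lem:second_moment_bound}, $\|\Delta_2\|^2\ge\tfrac1{4L}\|\theta-\phi\|^4=\ep^4/(4L)\ge 81\sigma^4/L$, while $\|\Delta_2\|\le\|\theta\|^2+\|\phi\|^2\lesssim L\sigma^2$ bounds it above. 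Plugging these into $D(\theta\parallel\phi)\ge\|\Delta_2\|^4/\bigl(4\max_\zeta\var_{\dist{P}_\zeta}T+\|\Delta_2\|^4\bigr)$ gives $D(\theta\parallel\phi)\ge c_L$. Combining the two regimes proves the claim, with $C_L$ polynomially small in $L$ in this crude accounting.

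The main obstacle is that there is no single estimate valid over the whole admissible range. When $\ep\gg\sigma\sqrt L$ a typical sample from $\dist{P}_\theta$ is genuinely far from the orbit of $\phi$ and the direct Gaussian bound is essentially tight; but when $\ep$ is only a constant multiple of $\sigma$ with $L$ large, the orbit of $\phi$ lies inside the bulk of $\dist{P}_\theta$, $\metric(Y,\phi)$ is typically small, and one must instead extract an $L$-dependent constant lower bound on $D$ purely from the mismatch of second moments --- forcing the $\chi^2$/test-function route via Lemmas~\ref{lem:testing_bound} and~\ref{lem:second_moment_bound}, and explaining why the constant cannot be taken independent of $L$. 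The secondary technical nuisance is the variance of $T$ under the mixture $\dist{P}_\phi$: the randomness of $G$ contributes an extra term of order $\|\phi\|^4\|\Delta_2\|_{\mathrm{op}}^2$ absent for a single Gaussian, which must be absorbed into the $L$-dependent constant.
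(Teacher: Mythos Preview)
Your proof is correct, and you rightly catch the $\sigma^{-2}$ typo. Both regimes check out: the far-regime Jensen/concentration argument is sound (the triangle-inequality step $\metric(G_0\theta+\sigma\xi,\phi)\ge\metric(\theta,\phi)-\sigma\|\xi\|$ is valid, and the event $\{\|\xi\|\le 3\sqrt L\}$ indeed has probability $\ge 1/2$), and the near-regime quadratic test via Lemma~\ref{lem:testing_bound} and Lemma~\ref{lem:second_moment_bound} is correct. One small simplification: since $\mu^2/(4\sigma_T^2+\mu^2)$ is increasing in $\mu$, the lower bound $\|\Delta_2\|^2\ge 81\sigma^4/L$ already gives $D\ge c_L$; the upper bound on $\|\Delta_2\|$ is not needed.

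The paper takes a shorter route that avoids your $\ep/(\sigma\sqrt L)$ split. It decomposes via Lemma~\ref{lem:first_moment}: with $\vartheta=\theta-\E G\theta$, $\varphi=\phi-\E G\phi$, one has $D(\theta\parallel\phi)=\tfrac{1}{2\sigma^2}\|\Delta_1\|^2+D(\vartheta\parallel\varphi)$ and $\|\theta-\phi\|^2=\|\Delta_1\|^2+\|\vartheta-\varphi\|^2$. If $\|\Delta_1\|^2\ge\|\vartheta-\varphi\|^2$, the first-moment term alone already gives $D\ge\tfrac{1}{4\sigma^2}\metric^2$; otherwise $\|\vartheta-\varphi\|^2\ge 9\sigma^2$, and the paper invokes the lower bound of Theorem~\ref{thm:tight_KL_bound} on the centered pair to get $D\ge C\sigma^{-4}\|\Delta_2\|^2\ge C_L\sigma^{-2}\|\vartheta-\varphi\|^2$. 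So the first-moment split replaces your far-regime Gaussian argument entirely and delivers the $\sigma^{-2}\metric^2$ bound directly in both branches, without the intermediate ``$D\ge c_L$ suffices'' step. On the other hand, your approach has the virtue of being fully self-contained about the variance of the quadratic statistic under $\dist{P}_\phi$ when $\|\phi\|/\sigma$ is large---the paper's invocation of Theorem~\ref{thm:tight_KL_bound} glosses over the fact that its stated hypothesis $3\metric(\vartheta,\varphi)\le\|\vartheta\|$ is violated here, whereas you verify the variance bound by hand.
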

\begin{proof}
We assume without loss of generality that $\metric(\theta, \phi) = \|\theta - \phi\|$.
Let $\varphi = \phi - \E[G\phi]$ and $\vartheta = \theta - \E[G \theta]$, and note that $\|\theta - \phi\|^2 = \|\E[G \theta] - \E[G \phi]\|^2 + \|\vartheta - \varphi\|^2$.
If $\|\E[G \theta] - \E[G \phi]\|^2 \geq \|\vartheta - \varphi\|^2$, then Lemma~\ref{lem:first_moment} implies
\begin{equation*}
D(\theta \| \phi) \geq \frac{1}{2 \sigma^2} \|\E[G \theta] - \E[G \phi]\|^2 \geq \frac{1}{4 \sigma^2} \|\theta - \phi\|^2\,,
\end{equation*}
which implies the claim.

On the other hand, if $\|\E[G \theta] - \E[G \phi]\|^2 < \|\vartheta - \varphi\|^2$, then the assumption that $\|\theta - \phi\| \geq 3 \sqrt 2 \sigma$ implies that $\|\vartheta - \varphi\|^2 \geq 9 \sigma^2 \geq 9 \|\theta\|^2$.
By Lemma~\ref{lem:second_moment_bound}, we obtain $\|\E[(G \vartheta)^{\otimes 2}] - \E[(G \varphi)^{\otimes 2}]\|^2 \geq \frac{1}{4L} \|\vartheta - \varphi\|^4 \geq \frac{9\sigma^2}{4 L} \|\vartheta - \varphi\|^2$.
Applying Theorem~\ref{thm:tight_KL_bound} implies
\begin{equation*}
D(\theta \| \phi) \geq C \sigma^{-4} \|\E[(G \vartheta)^{\otimes 2}] - \E[(G \varphi)^{\otimes 2}]\|^2 \geq C_L \sigma^{-2} \|\theta - \phi\|^2\,.
\end{equation*}
\end{proof}
\bibliographystyle{alpha}
\bibliography{mra_moment_matching}
\end{document}